\newcounter{theorem}
\newtheorem{theorem}[theorem]{Theorem}
\newtheorem{lemma}[theorem]{Lemma}
\newtheorem{prop}[theorem]{Proposition}
\newtheorem{cor}[theorem]{Corollary}
\theoremstyle{definition}
\newtheorem{defn}[theorem]{Definition}
\theoremstyle{remark}
\newtheorem*{remark*}{Remark}
\newtheorem{rmk}[theorem]{Remark}
\newtheorem{example}[theorem]{Example}
\numberwithin{equation}{section}
\newcommand{\C}{\mathrm{C}^*}
\newcommand{\cU}{\mathcal{U}}
\newcommand{\cM}{\mathcal{M}}
\newcommand{\bv}{\mathbbm{v}}
\newcommand{\bw}{\mathbbm{w}}
\newcommand{\sep}{\mathrm{sep}}
\newcommand{\F}{\mathcal{F}}
\newcommand{\U}{\mathcal{U}}
\newcommand{\M}{\mathcal{M}}
\newcommand{\T}{\mathbb{T}}
\newcommand{\G}{\mathcal{G}}
\newcommand{\id}{\mathrm{id}}
\newcommand{\KK}{\mathrm{KK}}
\newcommand{\II}{\mathrm{II}}
\newcommand{\N}{\mathbb{N}}
\newcommand{\cC}{\mathcal{C}}
\newcommand{\Corr}{\mathrm{Corr}}
\newcommand{\Aut}{\mathrm{Aut}}
\newcommand{\Ad}{\mathrm{Ad}}
\newcommand{\Hilb}{\mathrm{Hilb}}
\newcommand{\Irr}{\mathrm{Irr}}
\newcommand{\Hom}{\mathrm{Hom}}
\newcommand{\AND}{\mathrm{and}}
\newcommand{\dist}{\mathrm{dist}}
\newcommand{\asym}{\cong_u}
\newcommand{\amp}{\mathrm{amp}}
\newcommand{\Ker}{\mathrm{Ker}}
\newcommand{\rev}{\mathrm{rev}}
\title[Elliott intertwining for actions of tensor categories]{An Elliott intertwining approach to classifying actions of $\C$-tensor categories}
\author{Sergio Girón Pacheco}
\address{\hskip-\parindent Sergio Girón Pacheco, Department of mathematics, KU Leuven, Celestijnenlaan 200B, 3001, Leuven, Belgium.}
\email{sergio.gironpacheco@kuleuven.be}
\author{Robert Neagu}
\address{\hskip-\parindent Robert Neagu, Department of mathematics, KU Leuven, Celestijnenlaan 200B, 3001, Leuven, Belgium. }
\email{robert.neagu@kuleuven.be}
\thanks{RN funded by the European Union. Views and opinions expressed are however those of the authors only and do not necessarily reflect those of the European Union or the European Research Council. Neither the EU nor the ERC can be held responsible for them.}
\begin{document}

\begin{abstract}
We introduce a categorical approach to classifying actions of C$^*$-tensor categories on $\C$-algebras up to cocycle conjugacy. We show that, in this category, inductive limits exist and there is a natural notion of approximate unitary equivalence. Then, we generalise classical Elliott intertwining results to the tensor category equivariant case, in the same fashion as done by Szabó for the group equivariant case. 
\end{abstract}
\maketitle

\numberwithin{theorem}{section}	

\section*{Introduction}
\renewcommand*{\thetheorem}{\Alph{theorem}}
The study of existence and classification of symmetries on operator algebras has been a ubiquitous theme in the field. In the case of von Neumann algebras, this can be traced back to the work of Connes (\cite{connes1,connes2}), where he classified automorphisms of the hyperfinite $\II_1$ factor $\mathcal{R}$ up to outer conjugacy. Subsequently, Jones classified actions of finite groups on $\mathcal{R}$ (\cite{jonesactions}), while Ocneanu generalised these results to the case of amenable groups acting on $\mathcal{R}$ (\cite{ocneanu}).
\par Later, Popa classified amenable subfactors $N\subset\mathcal{R}$ by their standard invariant (\cite{popa1}). Following subsequent reformulations (\cite{LO94,MU03}), the standard invariant of a finite index subfactor $N\subset\mathcal{R}$ can be understood as a pair $(F,Q)$ with $F$ an action of a unitary tensor category $\mathcal{C}$ on $N$ and $Q$ a special object in $\cC$ that allows one to recover the inclusion $N\subset\mathcal{R}$ as a generalised crossed product.\footnote{Precisely, $Q$ is a $Q$-system as defined by Longo in \cite{LO94}.} The general setting of $\C$-tensor categories represents a unifying framework for studying group actions and more general quantum symmetries arising from subfactor theory. Following the recent spectacular classification results for group actions on Kirchberg algebras by group equivariant KK-theory (\cite{DynamicalKP}), and the development of a $\cC$-equivariant KK-theory (\cite{EqKK}), it is natural to consider to what extent one can classify actions of more general C$^*$-tensor categories on simple, amenable C$^*$-algebras.
\par Prior and closely linked to the classification of symmetries of operator algebras was the classification of the operator algebras themselves. First, Connes classified amenable factors in \cite{CO76} with the exception of one type which was later completed by Haagerup (\cite{HAA87}). In the case of C$^*$-algebras, Elliott classified C$^*$-inductive limits of finite dimensional C$^*$-algebras by their ordered K-theory in \cite{ElliottAF}. Elliott's methods laid the foundations for a roadmap to a classification of more general simple, amenable C$^*$-algebras. Following Elliott's strategy and building on decades of work by many mathematicians, the classification programme of simple, amenable $\C$-algebras successfully culminated in \cite{Kirch00, Phil00, GLN1, GLN2, TWW, ClassifMorphisms23}.

\par The general strategy for a classification of operator algebras is to first achieve existence and uniqueness results for morphisms with respect to the proposed classification invariant. Loosely speaking, to construct an isomorphism between two objects $A$ and $B$ in the category of $\C$-algebras, by virtue of an existence type statement, one obtains morphisms $\phi:A\to B$ and $\psi:B\to A$ which induce mutually inverse elements at the level of the chosen abstract invariant. Then, a suitable uniqueness result will give that $\psi\circ \phi$ is equivalent to $\id_A$ and $\phi\circ\psi$ is equivalent to $\id_B$. In the category of separable $\C$-algebras, the required notion of equivalence is approximate unitary equivalence. The final step is to  perform a so-called approximate intertwining argument. Essentially, one can tweak the morphisms $\phi$ and $\psi$ by unitaries until they become mutually inverse isomorphisms (see \cite[Corollary 2.3.4]{rordambook} and the introduction of \cite{cocyclecategszabo} for a more detailed breakdown of this roadmap to classification). Furthermore, in \cite{AbstractElliott}, Elliott proposes a similar classification strategy for arbitrary objects in more general categories that have a suitable notion of inner automorphisms. 
\par Instances of the implementation of Elliott's strategy for the classification of compact (quantum) group actions on C$^*$-algebras appear in \cite{GASA16,BASZVO17}, where the actions are assumed to have the Rokhlin property. In \cite{cocyclecategszabo}, Szab\'o articulates Elliott's proposed strategy in the generality of actions of locally compact groups as an alternative to the Evans--Kishimoto intertwining type arguments (see \cite{EvansKish}). In his work, Szab\'o defines an appropriate category of $\Gamma$-C$^*$-algebras, that is C$^*$-algebras carrying an action of $\Gamma$, and produces a vast collection of intertwining results for this category. Szab\'o's construction is a key ingredient facilitating the recent groundbreaking classification 
of amenable actions on Kirchberg algebras in \cite{DynamicalKP}. 
\par In this paper, we will develop the necessary techniques to perform approximate intertwining arguments for C$^*$-algebras carrying an action of a tensor category $\cC$ ($\cC$-$\C$-algebras). Glimpses of these techniques can be found in \cite{limitfusioncategclassif}, where exact intertwining arguments are performed to classify inductive limit actions of fusion categories on AF-algebras. However, for classification, it is often necessary to develop a more general framework which allows to perform \emph{approximate intertwining arguments}. In the setting of $\C$-algebras, Elliott used exact intertwining arguments to classify AF-algebras (\cite{ElliottAF}), but more refined approximate versions were needed to classify simple $A\mathbb{T}$-algebras in \cite{elliott}. Approximate intertwining arguments in the setting of actions of tensor category have already appeared in the literature; see \cite{IntertTomatsu} for an adaptation of the Evans-Kishimoto intertwining argument introduced in \cite{EvansKish}.

Primarily, the categorical framework developed by Szabó in \cite{cocyclecategszabo} provides the conceptual skeleton for our construction. However, unlike in the group action case, a tensor category might not act by automorphisms, so our techniques differ. In \cite{cocyclecategszabo}, Szabó defines a cocycle morphism as a pair consisting of an extendible $^*$-homomorphism and a unitary cocycle. In the setting of tensor categories, the action on a $\C$-algebra is given by a family of bimodules acting compatibly with respect to the tensor product. Therefore, the cocycles will be given by certain bimodule maps satisfying a family of commuting diagrams. Hence, it is not apparent how to adapt Szabó's arguments.

\par To perform intertwining techniques, we first need to introduce a category whose objects are $\cC$-C$^*$-algebras.\ Multiple notions of morphisms between $\cC$-C$^*$-algebras have appeared in the literature (see \cite[Definition 2.11]{EqKK} or \cite[Definition 3.2]{limitfusioncategclassif} for example).\ Any such notion has a common flavour: Given a C$^*$-tensor category $\cC$ acting on $\C$-algebras $A$ and $B$ via tensor functors $(F,J):\cC^{\rev}\to \Corr_0(A)$ and $(G,I):\cC^{\rev}\to\Corr_0(B)$ respectively, a morphism between $(A,F,J)$ and $(B,G,I)$ is given by an $A$-$B$-correspondence $E$ and a family of bimodule maps $\mathbbm{v}_X: F(X)\boxtimes E\rightarrow E\boxtimes G(X)$ satisfying coherence diagrams.\footnote{See Section \ref{sect: BuildingActions} for our conventions on actions of tensor categories.} The intricacies between the different definitions lie in how general correspondences we allow and what structure we expect the maps $\mathbbm{v}_X$ to have. We choose to work with correspondences which arise from (possibly degenerate) $^*$-homomorphisms $\varphi:A\rightarrow B$ and with (possibly non-adjointable) isometries $\mathbbm{v}_X$. We will say this data yields a \emph{cocycle morphism} and we denote the category consisting of $\cC$-$\C$-algebras and cocycle morphisms by $\C_{\cC}$. When the acting category is $\Hilb(\Gamma)$, and the action factors through automorphisms, we give an explicit formula for the family of bimodule maps $\{\mathbbm{v}_X\}_{X\in\cC}$ in Example \ref{example: intertHilb(G)}. 

In $\C_{\cC}$, we are able to construct inductive limits and define a suitable topology on the space of morphisms.\ An important challenge in constructing a category of cocycle morphisms is the composition. Composing two cocycle morphisms might not give a cocycle morphism. Therefore, we need to introduce a slightly different composition which will allow us to obtain a category. If the morphisms are non-degenerate, this new composition agrees with the canonical one, and with the one introduced by Szabó in the group action case ({\cite[Proposition 1.15]{cocyclecategszabo}}). Moreover, to avoid the need of restricting to non-degenerate morphisms, the maps $\mathbbm{v}_X$ are assumed to be \emph{isometries}. This assumption allows us, unlike in \cite{cocyclecategszabo}, to work in the possibly non-extendible setting, generalising the notion of a cocycle morphism from \cite{cocyclecategszabo} when restricting to twisted group actions.

An essential observation for our construction is that we can encode the information of a cocycle morphism into a family of linear maps satisfying certain conditions (see {\cite[Lemma 3.8]{limitfusioncategclassif}} where this is done for unital injective cocycle morphisms). Therefore, a cocycle morphism can be equivalently defined as follows.

\begin{defn}[Lemma \ref{linearmapspicture}]\label{defn: A}
Let $\cC$ be a $\C$-tensor category acting on $\C$-algebras $A$ and $B$ via $(A,F,J)$ and $(B,G,I)$ respectively. Then a cocycle morphism $(\phi,h): (A,F,J)\rightarrow (B,G,I)$  is given by a $^*$-homomorphism $\phi:A\to B$ and a family of linear maps: 
$$h= \{h^{X}: F(X)\rightarrow G(X)\}_{X\in\cC}$$ such that for any $X,Y\in\cC$
\begin{enumerate}
\item
for any morphism $f\in \Hom(X,Y)$, $G(f)\circ h^{X}=h^{Y}\circ F(f)$;
\item
$\phi(\langle x , y\rangle_A)=\langle h^{X}(x) , h^{X}(y)\rangle_B$ for any $x,y\in F(X)$; 
\item the following diagram commutes:
\[
\begin{tikzcd}
F(Y)\boxtimes F(X)
\arrow[swap]{d}{h^{Y}\boxtimes h^{X}}
\arrow{r}{J_{X,Y}}
& F(X\otimes Y)\arrow{d}{h^{X\otimes Y}}
 \\
G(Y)\boxtimes G(X)
\arrow{r}{I_{X,Y}}
& G(X\otimes Y);
\end{tikzcd}
\]

\item $h^{1_{\cC}}:A\to B$ is given by $h^{1_{\cC}}(a)=\phi(a)$ for any $a\in A$. 
\end{enumerate}   
\end{defn}

In the case of a group action, we give an explicit formula for the family of linear maps $\{h^X\}_{X\in\cC}$ in Example \ref{exmp: linearmapsHilb(G)}. Compared with the usual definition of a cocycle morphism given by pairs $(\phi,\mathbbm{v})$ consisting of a $^*$-homomorphism and a family of bimodule maps $\{\mathbbm{v}_X\}_{X\in\cC}$ (see Definition \ref{cocyclemorphism}), Definition \ref{defn: A} provides a more direct framework for setting up the intertwining arguments. In particular, to compose two cocycle morphisms, one composes the $^*$-homomorphisms, as well as the corresponding linear maps. Furthermore, the natural topology on cocycle morphisms is obtained by considering pointwise differences of the linear maps. When the acting category is semisimple and has countably many isomorphism classes of simple objects (which is assumed for the rest of the introduction), we can phrase convergence in this topology purely in terms of the linear maps.

\begin{defn}[Lemma \ref{lemma: topologycocyclemor}]
Let $(\phi_{\lambda},h_{\lambda}), (\phi,h):(A,F,J)\to (B,G,I)$ be cocycle morphisms. Then $(\phi_{\lambda},h_{\lambda})$ converges to $(\phi,h)$ if and only if for any $X\in\Irr(\cC)$, $h_{\lambda}^X$ converges pointwise to $h^X$ in the norm induced by the right inner product.\footnote{The collection $\Irr(\mathcal{C})$ is a choice of representatives for isomorphism classes of simple objects in $\cC$.}
\end{defn}

In general, this topology is coarser than the one used by Szabó in the group action case ({\cite[Definition 2.5]{cocyclecategszabo}}) (see Example \ref{exmp: DiffTop}). However, it is the same when restricted to non-degenerate cocycle morphisms. With this topology in hand, we can then define approximate unitary conjugation for cocycle morphisms. Moreover, unlike Szabó's definition in {\cite[Definition 2.8]{cocyclecategszabo}}, this is a symmetric relation (see Lemma \ref{lem: conjisequivalence}), which simplifies the intertwining arguments. This can be formulated as follows.

\begin{defn}[Lemma \ref{lemma: approxunitconj}]
If $(\phi,h),(\psi,l):(A,F,J)\to (B,G,I)$ are cocycle morphisms, then $(\psi,l)$ is \emph{approximately unitarily equivalent} to $(\phi,h)$ if and only if there exists a net of unitaries $u_\lambda\in\mathcal{U}(\mathcal{M}(B))$ such that $$\|l^X(x)-u_\lambda\rhd h^X(x)\lhd u_\lambda^*\| \overset{\lambda}{\longrightarrow} 0$$ for any $X\in \Irr(\cC)$ and any $x\in F(X)$.\footnote{Note that if $A$ is separable, and $\cC$ has countably many simple objects, then $u_{\lambda}$ can be chosen to be a sequence. The symbols $\rhd$ and $\lhd$ denote the left and right actions.}
\end{defn}

With the introduced topology at the level of morphism spaces and notion of unitary equivalence, we can place the subcategory of $\C_{\cC}$ consisting of separable $\cC$-$\C$-algebras and extendible cocycle morphisms in Elliott's abstract classification framework from \cite{AbstractElliott}. Precisely, the quotient category of $\C_{\cC}$ by approximate unitary equivalence is a classification category in the sense of \cite{AbstractElliott}. Hence, we get the following theorem.

\begin{theorem}[Corollary \ref{intertidentity}]\label{thm: IntroThmIntert}
Let $(F,J): \cC\curvearrowright A$ and $(G,I): \cC\curvearrowright B$ be actions on separable $\C$-algebras. Let
\[
(\phi, h): (A,F,J) \to (B,G,I) \quad
and \quad
(\psi, l): (B,G,I) \to (A,F,J)
\] be two extendible cocycle morphisms such that the compositions $(\psi,l)\circ (\phi, h)$ and $(\phi,h)\circ (\psi, l)$ are approximately inner. Then $(\phi, h)$ and $(\psi, l)$ are approximately unitarily equivalent to mutually inverse cocycle conjugacies.
\end{theorem}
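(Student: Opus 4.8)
The plan is to run an Elliott-type two-sided approximate intertwining inside $\C_{\cC}$, exploiting that its quotient by approximate unitary equivalence is a classification category in the sense of \cite{AbstractElliott}. The first step is purely formal: saying that $(\psi,l)\circ(\phi,h)$ and $(\phi,h)\circ(\psi,l)$ are approximately inner means precisely that $[(\psi,l)]\circ[(\phi,h)]=[\id_{(A,F,J)}]$ and $[(\phi,h)]\circ[(\psi,l)]=[\id_{(B,G,I)}]$ in the quotient category, i.e.\ $[(\phi,h)]$ and $[(\psi,l)]$ are mutually inverse isomorphisms there. It then suffices to lift this pair of quotient-isomorphisms to a pair of mutually inverse cocycle conjugacies in $\C_{\cC}$ that are approximately unitarily equivalent to $(\phi,h)$ and $(\psi,l)$ respectively.

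To produce such lifts concretely, I would build a zigzag
\[
(A,F,J)\xrightarrow{\ \mu_1\ }(B,G,I)\xrightarrow{\ \nu_1\ }(A,F,J)\xrightarrow{\ \mu_2\ }(B,G,I)\xrightarrow{\ \nu_2\ }\cdots
\]
of unitary perturbations $\mu_n\au(\phi,h)$ and $\nu_n\au(\psi,l)$ whose consecutive compositions converge to the identities. Since $\cC$ is semisimple with countably many simple classes, fix an exhaustion $1_{\cC}\in K_1\subseteq K_2\subseteq\cdots$ of $\Irr(\cC)$ by finite sets, for each $X$ an increasing sequence of finite sets exhausting a dense subset of the unit ball of $F(X)$ and of $G(X)$, and a summable sequence of tolerances $\epsilon_n$. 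Starting from $\mu_1=(\phi,h)$, I would recursively choose multiplier unitaries $u_n\in\U(\M(A))$ and $w_n\in\U(\M(B))$ and set $\nu_n=\Ad(u_n)\circ(\psi,l)$, $\mu_{n+1}=\Ad(w_n)\circ(\phi,h)$ so that, using the two approximate-innerness hypotheses together with the symmetry of $\au$ from Lemma \ref{lemma: uniteqsym} to reverse roles when needed,
\[
\max_{X\in K_n}\max_{x}\|(\nu_n\circ\mu_n)^X(x)-x\|<\epsilon_n
\quad\text{and}\quad
\max_{X\in K_n}\max_{y}\|(\mu_{n+1}\circ\nu_n)^X(y)-y\|<\epsilon_n,
\]
where $x,y$ range over the current finite data. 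The key point is that each correction is implemented by conjugating the family of linear maps $h^X$ (resp.\ $l^X$) by a single multiplier unitary acting as $u\rhd\,\cdot\,\lhd u^*$, so that the increments $w_n,u_n$ can be kept small enough for the accumulated conjugations to converge strictly.

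The third step is to pass to the limit. Because the morphism spaces carry the topology of Lemma \ref{lemma: topologycocyclemor} and $\au$ is compatible with composition, the summability of the $\epsilon_n$ makes the accumulated conjugations converge and the perturbed sequences $(\mu_n)$ and $(\nu_n)$ converge to limit cocycle morphisms $(\Phi,H)$ and $(\Psi,L)$; the telescoping triangle estimates give $\Psi\circ\Phi=\id_{(A,F,J)}$ and $\Phi\circ\Psi=\id_{(B,G,I)}$, so these are mutually inverse cocycle conjugacies, while $\Phi\au(\phi,h)$ and $\Psi\au(\psi,l)$ since every $\mu_n,\nu_n$ is a unitary perturbation of the originals and the perturbations converge. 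This is exactly the conclusion delivered abstractly once the classification-category axioms are in place, so in practice I would invoke Elliott's intertwining theorem rather than re-derive the estimates.

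The hard part is ensuring that the limit data genuinely assembles into a cocycle morphism and that the construction is compatible with the modified composition. The limit maps $H^X$ must exist and remain coherent \emph{simultaneously} across all $X\in\Irr(\cC)$, continuing to satisfy conditions (i)--(v) of Definition \ref{defn: A} — in particular the tensoriality square with $J_{X,Y},I_{X,Y}$ — which forces the convergence to be controlled uniformly over the finite sets $K_n$ rather than one object at a time; this is where the chosen topology is essential. Equally delicate is that the category's composition is not the naive one, so one must verify that unitary conjugation, composition, and the limit interact correctly, and that extendibility survives so the multiplier unitaries keep acting as required. These are precisely the points where Szab\'o's group-equivariant argument does not transfer verbatim, since a unitary now acts on each bimodule $F(X)$ by the two-sided formula $u\rhd\,\cdot\,\lhd u^*$ rather than by a scalar-valued cocycle twist.
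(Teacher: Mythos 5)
Your proposal takes essentially the same route as the paper: the paper first checks the axioms of Elliott's abstract classification framework for the subcategory of extendible cocycle morphisms (complete metric from Lemma \ref{lemma: topologycocyclemor}, inner automorphisms from Lemma \ref{lemma: linearmapsAd(u)}, jointly continuous composition from Lemma \ref{lemma: compjointlycts}), and then also carries out precisely your zigzag of multiplier-unitary perturbations as Theorem \ref{intertapproxuniteq} applied to the constant systems $A_n=A$, $B_n=B$ with identity connecting maps, using Remark \ref{rmk: VerticalComp} (extendibility) and Lemma \ref{lemma: uniteqsym} (symmetry) exactly where you indicate. The one inessential inaccuracy is your remark that the accumulated conjugating unitaries should converge strictly: in the paper each perturbing unitary is chosen afresh and need not converge at all --- only the conjugated families $u_k\rhd h^X(\cdot)\lhd u_k^*$ converge pointwise, which is already what Definition \ref{unitaryeq} demands for the limit to be approximately unitarily equivalent to $(\phi,h)$.
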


Along with Theorem \ref{thm: IntroThmIntert}, in Section \ref{sect: TwoSidedInt} we use our construction to obtain various $\cC$-equivariant versions of classical intertwining arguments. For instance, we establish a general Elliott two-sided intertwining argument (see \cite[Proposition 2.3.2]{rordambook} for a non-equivariant version) as well as an intertwining through reparametrisation in Theorem \ref{thm: IntThroughRepar}. These results are essential ingredients in our upcoming classification results for actions of unitary tensor category on Kirchberg algebras in joint work with Kitamura.

In Section \ref{section: onesidedintert}, the techniques we develop allow us to perfom a $\cC$-equivariant one-sided intertwining argument (see Theorem \ref{thm: onesidedintert}), generalising the classical one-sided intertwining arguments of \cite[Proposition 2.3.5]{rordambook} and \cite[Proposition 4.3]{cocyclecategszabo}.\ This result is a key ingredient in the work of Evington, C. Jones, and the first named author in obtaining a McDuff-type characterisation of equivariant $\mathcal{D}$-stability of an action of a unitary tensor category, for strongly self-absorbing $\mathcal{D}$ (\cite{EqZ}).
The remaining part of Section \ref{section: onesidedintert} is concerned with asymptotic versions of the results in Section \ref{sect: TwoSidedInt}.

\section{Preliminaries}
\numberwithin{theorem}{section}

\subsection{Hilbert bimodules}

In this subsection, we collect a few basics on the theory of Hilbert bimodules. We refer the reader to \cite{Hilbertmodules} for a more detailed exposition. First, we start by recalling the definition of a Hilbert module as introduced by Paschke in \cite{PA73}.
\begin{defn}\label{defn:rightHilbertmodules}
Let $X$ be a vector space over $\mathbb{C}$ and $B$ be a $\C$-algebra. We say that $X$ is a \emph{(right-)Hilbert $B$-module} if $X$ is a right $B$-module equipped with a function $\langle\cdot, \cdot\rangle_B : X \times X \to B$ satisfying the following properties:
\begin{enumerate}[label=\textit{(\roman*)}]
\item $\langle\cdot, \cdot\rangle_B$ is left conjugate linear and right linear.\label{item:1Hilbert}
\item For any $x,y \in X$ and $b \in B$, one has that $\langle x,yb\rangle_B = \langle x,y\rangle_Bb$.\label{item:2Hilbert}
\item For any $x\in X$, $\langle x,x\rangle_B \geq 0$ and $\langle x,x\rangle_B =0$ if and only if $x=0$.\label{item:3Hilbert}
\item For any $x,y\in X$, $\langle x,y\rangle_B = \langle y,x\rangle_B^*$.\label{item:4Hilbert} 
\item $X$ is complete with respect to the norm induced by $\|\langle x,x\rangle_B\|^{1/2}$.\label{item:5Hilbert}
\end{enumerate}
\end{defn}

If $X$ only satisfies the properties \ref{item:1Hilbert}-\ref{item:4Hilbert} above, then we say $X$ is a \emph{pre-Hilbert $B$-module}. For $x\in X$, we denote by $|x|^2$ the positive element of $B$ given by $\langle x,x\rangle_B$. We denote the adjointable operators of $X$ by $\mathcal{L}(X)$. A \emph{(right-)Hilbert $A$-$B$-bimodule}, is a right Hilbert $B$-module with a left $A$ action by adjointable operators. For a Hilbert $A$-$B$-bimodule $X$, we often write the left action of an element $a\in A$ on a vector $x\in X$ by $a\rhd x$ and the right action of an element $b\in B$ on $x$ by $x\lhd b$. The morphisms we consider between Hilbert $A$-$B$-bimodules are adjointable linear maps which commute with the left $A$-action. In particular, Hilbert $A$-$B$-bimodules are called isomorphic if there is a unitary bimodule map between them.

\begin{example}\label{example: homsbimodules}
For a C$^*$-algebra $B$, the vector space $X=B$ is a Hilbert $B$-module with the right $B$-module structure given by multiplication and the inner product $\langle a,b\rangle_B=a^*b$ for $a,b\in B$. In this case, any $^*$-homomorphism $\phi:A\rightarrow M(B)$ induces a right-Hilbert $A$-$B$-bimodule which we denote by $_{\phi}B$.
\end{example}
Let $A,B,C$ be $\C$-algebras. If $X$ is a Hilbert $A$-$B$-bimodule and $E$ is a Hilbert $B$-$C$-bimodule we may form their \emph{internal tensor product} $X\boxtimes E$ that is a Hilbert $A$-$C$-bimodule. We sketch this construction and refer to \cite[Section 4]{Hilbertmodules} for details. To perform the internal tensor product one starts by considering the algebraic tensor product of vector spaces $X\odot E$. We identify the elements of the form $x\lhd b\odot y$ with $x\odot b\rhd y$ to form the quotient
\begin{equation*}
    V=X\odot E/\mathrm{span}\{x\lhd b\odot y-x\odot b\rhd y:\ x\in X,\ y\in E,\ b\in B\}.
\end{equation*}
We denote the image of the elementary tensor $x\odot y$ of $X\odot E$ in $V$ under the canonical quotient map by $x\boxtimes y$. One may define a right $C$-action and a right $C$-inner product on $V$ by
\begin{equation*}
    (x\boxtimes y)\lhd c=x\boxtimes (y \lhd c),
\end{equation*}
\begin{equation*}
    \langle x\boxtimes y, z\boxtimes w\rangle_C=\langle y,\langle x,z\rangle_B \rhd w\rangle_C, 
\end{equation*}
for any $x,z\in X$ and $y,w\in E$. It follows that $V$ equipped with this $C$-action and $C$-valued inner product satisfies \ref{item:1Hilbert}-\ref{item:4Hilbert} of Definition \ref{defn:rightHilbertmodules}. We produce a Hilbert $C$-module $X\boxtimes E$ by completing $V$ under the norm defined by the inner product. Moreover, one can induce a left $A$ action on $X\boxtimes E$ through
\begin{equation*}
    a\rhd (x\boxtimes y)=(a\rhd x)\boxtimes y
\end{equation*}
for all $a\in A$, $x\in X$ and $y\in E$. This equips $X\boxtimes E$ with the structure of a Hilbert $A$-$C$-bimodule.

If the bimodules are given by $^*$-homomorphisms into the multiplier algebra as in Example \ref{example: homsbimodules}, one may get greater insight into the structure of their tensor product. First we recall the definition of non-degenerate and extendible $^*$-homomorphisms.

\begin{defn}\label{defn: non-deg}
Let $A$ and $B$ be $\C$-algebras and $\phi:A\to \mathcal{M}(B)$ a $^*$-homomorphism.\ Then $\phi$ is said to be \emph{non-degenerate} if $\phi(A)B$ is dense in $B$.
\end{defn}

If $\phi:A\to\mathcal{M}(B)$ is non-degenerate, then $\phi$ extends to a unital $^*$-homomorphism $\phi:\mathcal{M}(A)\to\mathcal{M}(B)$ ({\cite[Proposition 2.5]{Hilbertmodules}}). 

\begin{defn}\label{defn: extendible}
Let $A$ and $B$ be $\C$-algebras.\ A $^*$-homomorphism $\phi:A\to \mathcal{M}(B)$ is called \emph{extendible} if for any increasing approximate unit $e_{\lambda}\in A$ (throughout this paper all approximate units are assumed to be increasing), the net $\phi(e_{\lambda})\in\mathcal{M}(B)$ converges strictly to a projection $p\in\mathcal{M}(B)$.\footnote{Any non-degenerate $^*$-homomorphism $\phi: A\rightarrow \M(B)$ is extendible as for an approximate unit $e_{\lambda}\in A$ the net $\phi(e_\lambda)$ converges strictly to $1_{\mathcal{M}(B)}$ (see e.g. \cite[II.7.3.8.]{BL06}).} As discussed in {\cite[Definition 1.7]{cocyclecategszabo}}, $\phi$ then factorises through $\mathcal{M}(pBp)\cong p\mathcal{M}(B)p\subseteq \mathcal{M}(B)$, with the $^*$-homomorphism $\phi_p:A\to\mathcal{M}(pBp)$ now non-degenerate. In this case, following \cite{cocyclecategszabo}, we let $\phi^{\dagger}:\mathcal{U}(\mathcal{M}(A))\to\mathcal{U}(\mathcal{M}(B))$ be the unital group homomorphism defined by $\phi^{\dagger}(u)=\phi_p(u)+(1_{\M(B)}-p)$ for all $u\in\mathcal{U}(\mathcal{M}(A))$. Note that for any $a\in A$ and $u\in \U(\M(A))$ one has that $\phi(ua)=\phi^{\dagger}(u)\phi(a)$.
\end{defn}

\begin{rmk}\label{rmk: extendible}
Let $\phi:A\rightarrow \M(B)$, $\psi:B\rightarrow \M(C)$  be $^*$-homomorphisms with $\psi$ extendible, and $q\in \M(C)$ be the projection corresponding to $\psi$. One may compose them to get the $^*$-homomorphism $\psi_q\circ \phi:A\rightarrow \M(C)$ that we simply denote by $\psi\circ\phi$ (see \cite[Remark 1.9]{cocyclecategszabo}). If $\phi$ is also extendible, then the composition $\psi\circ\phi$ remains extendible and $(\psi\circ\phi)^{\dagger}=\psi^{\dagger}\circ \phi^{\dagger}$.
\end{rmk}

The proposition below yields a more explicit form for tensor products of bimodules arising from $^*$-homomorphisms into the multiplier algebra as in Example \ref{example: homsbimodules}. If the homomorphisms are non-degenerate, then the result is folklore (see \cite{correspondences}).

\begin{prop}\label{extendiblecase}
Suppose that $A,B$, and $C$ are $\C$-algebras and $\phi:A\to \mathcal{M}(B)$, $\psi:B\to\mathcal{M}(C)$ are $^*$-homomorphisms with $\psi$ extendible, but possibly degenerate. Then ${}_{\phi}B\boxtimes{}_{\psi}C\cong{}_{\psi\circ\phi}\overline{\psi(B)C}={}_{\psi\circ\phi}\psi(B)C$.  
\end{prop}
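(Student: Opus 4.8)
The plan is to exhibit an explicit inner‑product preserving bimodule isomorphism
\[
U\colon {}_{\phi}B\boxtimes{}_{\psi}C\longrightarrow \overline{\psi(B)C}, \qquad U(b\boxtimes c)=\psi(b)c,
\]
and then to identify its range using the Cohen--Hewitt factorisation theorem. First I would define $U_0$ on the algebraic tensor product $B\odot C$ by $b\odot c\mapsto \psi(b)c$ and check that it respects the balancing relation: since
$U_0(bb'\odot c)=\psi(bb')c=\psi(b)\psi(b')c=U_0(b\odot \psi(b')c)=U_0(b\odot b'\rhd c)$,
the map descends to the quotient space $V$ used in the construction of the internal tensor product. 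The heart of this step is that $U_0$ preserves the right $C$-valued inner product: on elementary tensors,
\[
\langle U_0(b\boxtimes c), U_0(b'\boxtimes c')\rangle_C = c^*\psi(b)^*\psi(b')c' = c^*\psi(b^*b')c',
\]
which is precisely $\langle b\boxtimes c, b'\boxtimes c'\rangle_C=\langle c,\langle b,b'\rangle_B\rhd c'\rangle_C$ computed in ${}_{\phi}B\boxtimes{}_{\psi}C$. Being inner‑product preserving, $U_0$ automatically annihilates the null vectors of the seminorm on $V$ and extends to an isometry $U$ on the completion ${}_{\phi}B\boxtimes{}_{\psi}C$.

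Next I would verify that $U$ intertwines the two bimodule structures. The right $C$-action is immediate from $U(b\boxtimes(c\lhd c'))=\psi(b)cc'$. For the left $A$-action, the point at which extendibility enters is the following: for $a\in A$ and $b\in B$ one has $\phi(a)b\in B$, so
\[
\psi^{\dagger}(\phi(a)b)=\psi(\phi(a)b)=\psi^{\dagger}(\phi(a))\,\psi(b),
\]
using that $\psi^{\dagger}$ restricts to $\psi$ on $B$ and is multiplicative. Consequently
\[
U\bigl(a\rhd(b\boxtimes c)\bigr)=\psi(\phi(a)b)c=\psi^{\dagger}(\phi(a))\,\psi(b)c=(\psi^{\dagger}\circ\phi)(a)\rhd U(b\boxtimes c),
\]
so $U$ carries the left $A$-action on ${}_{\phi}B\boxtimes{}_{\psi}C$ to left multiplication by $(\psi^{\dagger}\circ\phi)(A)$. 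The same computation (applied also to $a^{*}$) shows that $\psi^{\dagger}(\phi(a))$ and its adjoint map $\psi(B)C$ into itself, so that the left action is genuinely well defined and adjointable on the target module $\overline{\psi(B)C}$, which is a closed right $C$-submodule of $C$ and hence a Hilbert $A$-$C$-bimodule.

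For the range, every $\psi(b)c$ lies in the image, so $U$ has dense range in $\overline{\psi(B)C}$; since $U$ is an isometry on a complete space its range is closed, whence $U$ is a surjective isometry, i.e.\ a unitary of Hilbert $A$-$C$-bimodules. This already yields ${}_{\phi}B\boxtimes{}_{\psi}C\cong{}_{\psi^{\dagger}\circ\phi}\overline{\psi(B)C}$.

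The remaining equality $\overline{\psi(B)C}=\psi(B)C$ is where I expect to invoke a genuine theorem rather than a direct computation: regarding $C$ as a Banach left module over $B$ via $\psi$, the Cohen--Hewitt factorisation theorem (using that $B$ has a bounded approximate identity) shows that the closed linear span $\overline{\psi(B)C}$ already consists of single products $\psi(b)c$, so $\overline{\psi(B)C}=\psi(B)C$. I would flag the interplay with extendibility as the main conceptual obstacle distinguishing this from the folklore non‑degenerate case: extendibility of $\psi$ is exactly what is needed to define $\psi^{\dagger}\colon\mathcal{M}(B)\to\mathcal{M}(C)$ and thereby express the left action on the possibly proper submodule $\psi(B)C$, whereas the isometry computation $b\boxtimes c\mapsto\psi(b)c$ is the standard verification; in the non‑degenerate situation one has $p=1_{\mathcal{M}(C)}$, $\overline{\psi(B)C}=C$ and $\psi^{\dagger}=\widetilde{\psi}$, recovering the known isomorphism ${}_{\phi}B\boxtimes{}_{\psi}C\cong{}_{\widetilde{\psi}\circ\phi}C$.
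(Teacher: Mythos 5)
Your proof is correct, and it uses the same map $b\boxtimes c\mapsto\psi(b)c$ and the same appeal to Cohen factorisation for $\overline{\psi(B)C}=\psi(B)C$ as the paper, but it establishes bijectivity by a different mechanism. The paper proves injectivity by exhibiting an explicit one-sided inverse $S(c)=\lim_\lambda\eta_\lambda\boxtimes c$ for an approximate unit $(\eta_\lambda)$ of $B$, and the well-definedness of $S$ is shown there to be \emph{equivalent} to extendibility of $\psi$; this $S$ is moreover identified as the adjoint of $T$ and is reused later in the paper (Remark \ref{rmk: IndepApproxUnit}, Example \ref{example: intertHilb(G)}, Definition \ref{def: CompCocMor}). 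You instead observe that the map preserves the $C$-valued inner product, hence extends to an isometry with closed range, so surjectivity onto $\overline{\psi(B)C}$ follows from density of the image; in your argument extendibility enters only in order to make sense of $\psi^{\dagger}\circ\phi$ as the left action on the target. Your route is more economical for the proposition itself and correctly isolates the fact that the isomorphism of right Hilbert $C$-modules needs no extendibility at all, whereas the paper's route has the side benefit of producing the adjoint $S$, which it needs downstream. One small imprecision: with the convention of Definition \ref{defn: extendible}, where $\psi^{\dagger}$ involves adding the complementary projection $1_{\mathcal{M}(C)}-p$, the map $\psi^{\dagger}$ need not literally restrict to $\psi$ on $B$; the identity your computation actually requires is $\psi(\phi(a)b)=\psi^{\dagger}(\phi(a))\psi(b)$, which does hold because $\psi(b)=p\psi(b)p$ and hence $(1_{\mathcal{M}(C)}-p)\psi(b)=0$.
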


\begin{proof}
First, note that $\overline{\psi(B)C}=\psi(B)C$ by Cohen's factorisation (see \cite[Proposition 2.33]{raeburnwilliamsbook}). Let $T:{}_{\phi}B\boxtimes{}_{\psi}C\to{}_{\psi\circ\phi}\psi(B)C$ be the continuous linear map given by $T(b\boxtimes c)=\psi(b)c$ for any $b\in B$ and $c\in C$. We claim that $T$ is a bimodule isomorphism.\ Using the definition of the inner product, a standard check shows that $T$ is a well-defined bimodule map.\ Taking $(\eta_\lambda)_{\lambda\in \Lambda}$ to be an approximate unit for $B$ let $S:{}_{\psi\circ\phi}\psi(B)C\to {}_{\phi}B\boxtimes{}_{\psi}C$ be given by $S(c)=\lim\limits_{\lambda}\eta_\lambda\boxtimes c$ for any $c\in C$. The map $S$ is well-defined precisely because $\psi$ is extendible. Indeed, for $\lambda,\mu\in \Lambda$
\begin{align*}
|(\eta_\lambda-\eta_\mu)\boxtimes c|^2&=\langle c, \langle \eta_\lambda-\eta_\mu,\eta_\lambda-\eta_\mu\rangle_B \rhd c\rangle_C\\&=\langle c, (\eta_\lambda-\eta_\mu)(\eta_\lambda-\eta_\mu)\rhd c\rangle\\ &= \langle \psi(\eta_\lambda-\eta_\mu)c,\psi(\eta_\lambda-\eta_\mu)c\rangle.
\end{align*} Therefore, $S$ is well defined if and only if $\psi(\eta_\lambda-\eta_\mu)c$ converges to $0$ for any $c\in C$, which is equivalent to saying that $\psi(\eta_\lambda)$ converges strictly in $\mathcal{M}(C)$. Hence $S$ is well-defined. Moreover, it can be seen that $S$ is the adjoint of $T$ and hence continuous. Now for any $b\in B$ and $c\in C$, $$S(T(b\boxtimes c))=\lim\limits_{n\to\infty}\eta_n\boxtimes \psi(b)c=\lim\limits_{n\to\infty}\eta_n\boxtimes b\rhd c=\lim\limits_{n\to\infty}\eta_nb\boxtimes c=b\boxtimes c.$$ Then, by continuity, $S\circ T$ is the identity on the domain of $T$. Therefore, $T$ is injective. As $T$ is surjective by construction, it follows that $T$ is a bimodule isomorphism.
\end{proof}

\begin{rmk}\label{rmk: IndepApproxUnit}
By uniqueness of the adjoint, the map $S:{}_{\psi\circ\phi}\psi(B)C\to {}_{\phi}B\boxtimes{}_{\psi}C$ of the proof of Proposition \ref{extendiblecase} given by $S(c)=\lim\limits_{\lambda}\eta_\lambda\boxtimes c$ for any $c\in C$ and a choice of approximate unit $(\eta_\lambda)_{\lambda\in \Lambda}$ for $B$, does not depend on the choice of the approximate unit $\eta_\lambda$.
\end{rmk}
\begin{defn}\label{def:nondegbim}
    Let $A$ be a C$^*$-algebra. A Hilbert $A$-$B$-bimodule $X$ is called \emph{non-degenerate} if $XB$ is dense in $X$ and $AX$ is dense in $X$.
\end{defn}
\begin{rmk}
    Note that if $X$ is a right-Hilbert $B$-module then $XB$ is always dense in $X$ by the argument in \cite[Proposition 2.16]{nonunitalcateg}. Therefore, a Hilbert $A$-$B$-bimodule $E$ may only fail to be non-degenerate if $AX$ is not dense in $X$.
\end{rmk}
We end this subsection by showing that if $A$ is a $\C$-algebra and $E$ is a non-degenerate Hilbert $A$-bimodule, then we can obtain a well-defined action of $\mathcal{M}(A)$ on $E$. 

Let $(\eta_\lambda)_{\lambda\in\Lambda}$ be an approximate unit of $A$ and let $L_E:E\to A\boxtimes E$ and $R_E:E\to E\boxtimes A$ be the $A$-bimodule maps given by $L_E(x)=\lim\limits_{\lambda}\eta_\lambda\boxtimes x$ and $R_E(x)=\lim\limits_{\lambda}x\boxtimes\eta_\lambda$ for all $x\in E$.\ Note that, since $E$ is non-degenerate, $L_E$ and $R_E$ are unitary bimodule isomorphisms.\ This follows similarly to the proof of Proposition \ref{extendiblecase}. The inverses of $L_E$ and $R_E$ are given by their adjoints $L_E^{-1}(a\boxtimes x)=a\rhd x$ and $R_E^{-1}(x\boxtimes a)=x\lhd a$, for all $a\in A$ and $x\in E$ respectively.  

\begin{lemma}\label{lemma:ActionMultiplierAlg}
Let $A$ be a $\C$-algebra and $E$ be a non-degenerate Hilbert $A$-bimodule. Then one may extend the left and right actions of $A$ on $E$ to left and right actions of $\mathcal{M}(A)$ on $E$. These extended actions equip $E$ with the structure of a Hilbert $\mathcal{M}(A)$-bimodule.
\end{lemma}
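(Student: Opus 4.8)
The plan is to use the unitary bimodule isomorphisms $L_E\colon E\to A\boxtimes E$ and $R_E\colon E\to E\boxtimes A$ that were just constructed, together with the associativity of the internal tensor product, to transport the multiplier action from the standard Hilbert bimodule $A$ onto $E$. Concretely, for $m\in\mathcal{M}(A)$ I would define the left action by
\[
m\rhd x := L_E^{-1}\big((m\rhd \eta_\lambda)\boxtimes x\big)_{\text{via the }\M(A)\text{-action on }A\boxtimes E\big)},
\]
but it is cleaner to phrase it categorically. Observe that $A$ itself is a non-degenerate Hilbert $A$-bimodule on which $\mathcal{M}(A)$ acts on the left and right (by multiplier multiplication), and the associator gives a canonical isomorphism $A\boxtimes E\cong E$ as well as a left $\M(A)$-module structure on $A\boxtimes E$ coming from the left $\M(A)$-action on the first leg $A$. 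So I would define, for $m\in\mathcal{M}(A)$ and $x\in E$,
\[
m\rhd x := L_E^{-1}\big( (m\cdot \eta_\lambda)\boxtimes x\big),\qquad x\lhd m := R_E^{-1}\big( x\boxtimes(\eta_\lambda\cdot m)\big),
\]
where the limits over the approximate unit $(\eta_\lambda)$ exist by strict continuity of multiplication and are independent of the choice of $(\eta_\lambda)$ by the uniqueness-of-adjoint argument recorded in Remark \ref{rmk: IndepApproxUnit}.

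The key steps are then as follows. First I would check that these formulas are well defined and agree with the original actions when $m\in A\subseteq\M(A)$: for $m=a\in A$ one has $a\cdot\eta_\lambda\to a$ in norm, hence $(a\cdot\eta_\lambda)\boxtimes x\to a\boxtimes x$, so $L_E^{-1}(a\boxtimes x)=a\rhd x$ recovers the given action. Second, I would verify the module axioms: that $m\mapsto(x\mapsto m\rhd x)$ is linear and multiplicative with $(1_{\M(A)})\rhd x=x$ (the latter because $1\cdot\eta_\lambda=\eta_\lambda$ and $L_E^{-1}(\lim_\lambda\eta_\lambda\boxtimes x)=x$ by definition of $L_E$), and symmetrically for the right action, together with the bimodule compatibility $(m\rhd x)\lhd n=m\rhd(x\lhd n)$. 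Third, and most importantly, I would verify the adjointability/inner-product compatibility making $E$ a Hilbert $\M(A)$-bimodule in the sense of Definition \ref{nonunitalbimodules}: the right inner product $\langle\cdot,\cdot\rangle_A$ must be upgraded to an $\M(A)$-valued structure compatible with the extended right action, and the left action of each $m\in\M(A)$ must be adjointable with adjoint $m^*\rhd(\cdot)$. The relation $\langle x, m\rhd y\rangle=\langle m^*\rhd x, y\rangle$ I would deduce from the corresponding identity on $A\boxtimes E$ and the fact that $L_E$ is a \emph{unitary} isomorphism, so it intertwines inner products.

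The main obstacle I expect is the third step, specifically checking that the extended left action is by adjointable operators and that the right $\M(A)$-valued inner product is well defined and satisfies property \ref{item:2Hilbert} of Definition \ref{defn:rightHilbertmodules}. The subtlety is that for a genuine multiplier $m$ the element $m\cdot\eta_\lambda$ only converges strictly, not in norm, so one cannot naively pass limits inside the inner product; the clean way around this is to do all computations on $A\boxtimes E$ (respectively $E\boxtimes A$), where $\M(A)$ acts adjointably on the $A$-leg in the standard Hilbert bimodule $A$, and then transport everything through the unitaries $L_E$ and $R_E$. Since $L_E,R_E$ are unitary bimodule isomorphisms, they automatically preserve adjointability and inner products, so the Hilbert $\M(A)$-bimodule axioms for $E$ follow from the evident ones for $A\boxtimes E$ and $E\boxtimes A$. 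The only remaining routine point is to confirm that the left and right multiplier actions defined through $L_E$ and $R_E$ coincide on their overlap and are mutually compatible, which follows from associativity of the triple tensor product $A\boxtimes E\boxtimes A$ and the independence from the approximate unit noted in Remark \ref{rmk: IndepApproxUnit}.
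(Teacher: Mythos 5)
Your proposal follows essentially the same route as the paper: you transport the multiplier action on the $A$-leg of $A\boxtimes E$ (resp.\ $E\boxtimes A$) through the unitary bimodule isomorphisms $L_E$ and $R_E$ — your formula $m\rhd x=L_E^{-1}\bigl(\lim_\lambda (m\eta_\lambda)\boxtimes x\bigr)$ is exactly the paper's $L_E^{-1}(m\rhd L_E(x))$ — and then deduce adjointability and inner-product compatibility from the corresponding evident facts on $A\boxtimes E$ and $E\boxtimes A$, using that $L_E,R_E$ are unitaries. The argument is correct; the only cosmetic difference is that the paper keeps the inner product $A$-valued and merely verifies $\langle y,x\lhd v\rangle_A=\langle y,x\rangle_A v$ rather than ``upgrading'' it to an $\mathcal{M}(A)$-valued one.
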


\begin{proof}
For any $v\in\mathcal{M}(A)$ and any $x\in E$, let us define 
\begin{equation}\label{eq: ActionsM(A)}
v\rhd x=L_E^{-1}(v\rhd L_E(x)) \quad \AND \quad x\lhd v = R_E^{-1}(R_E(x)\lhd v).
\end{equation} We claim that these formulae define left and right actions of $\mathcal{M}(A)$ on $E$. Moreover, it is clear that (\ref{eq: ActionsM(A)}) restricted to $A$ coincides with the $A$-bimodule structure of $E$.

First, note that the left action of $\mathcal{M}(A)$ on $A\boxtimes E$ is given by left multiplication on $A$. Similarly, the right action of $\mathcal{M}(A)$ on $E\boxtimes A$ is given by right multiplication on $A$. Since $L_E, R_E$, and their inverses are bimodule maps, it is straightforward to see that the formulae in \eqref{eq: ActionsM(A)} define left and right actions of $\mathcal{M}(A)$ on $E$. To see that $E$ with its right $A$-valued inner product is a Hilbert $\mathcal{M}(A)$-bimodule it suffices to check that the right $\mathcal{M}(A)$-action commutes with the inner product and the left $\mathcal{M}(A)$-action consists of adjointable operators. First, for $x,y\in E$ and $v\in \mathcal{M}(A)$ we have that
\begin{align*}
\langle y,x\lhd v\rangle_A &=\langle y, R_{E}^{-1}(R_E(x)\lhd v)\rangle_A\\
&=\lim\limits_{\lambda}\lim\limits_{\mu}\langle y\boxtimes\eta_{\lambda},x\boxtimes\eta_{\mu}v\rangle_A\\
&=\lim\limits_{\lambda}\langle \eta_{\lambda},\langle y,x\rangle_Av\rangle_A\\
&=\langle y,x\rangle_A v,
\end{align*}so the right $\mathcal{M}(A)$-action commutes with the right inner product.\ Moreover, the operator of left multiplication by $v\in \mathcal{M}(A)$ has as an adjoint the operator of left multiplication by $v^*$.\ Indeed left multiplication by $v^*$ is the adjoint of the operator of left multiplication by $v$ on the Hilbert $A$-module $A$ so for $x,y\in E$
$$\langle v\rhd x,y\rangle_A=\langle L_E^{-1}(v\rhd L_E(x)),y\rangle_A=\langle L_E(x),v^*\rhd L_E(y)\rangle_A =\langle x,v^*\rhd y\rangle_A$$
as required.
\end{proof}

\subsection{Correspondences}
A reformulation of the theory of Hilbert bimodules is the language of C$^*$-correspondences.
\begin{defn}
Let $A,B$ be $\C$-algebras. An \emph{$A$-$B$-correspondence} is a Hilbert $B$-module $X_B$ together with a $^*$-homomorphism $\phi:A\to\mathcal{L}(X_B)$. We often simply denote a correspondence by its underlying $^*$-homomorphism. Moreover, we denote the collection of $A$-$B$-correspondences by $\Corr(A,B)$. 
\end{defn}

\begin{rmk}
Note that any $A$-$B$-correspondence induces a right-Hilbert $A$-$B$-bimodule and vice versa. If $\phi:A\to\mathcal{L}(X_B)$ is an $A$-$B$-correspondence, then $X_B$ becomes a right-Hilbert $A$-$B$-bimodule, with the left action given by $\phi$. We will often denote this bimodule by ${}_{\phi}X$, forgetting the right $B$-action. Conversely, given a right-Hilbert $A$-$B$-bimodule $X$, the left action by $A$ induces an $A$-$B$-correspondence. Therefore, we will freely flip between the two pictures. 
\end{rmk}

One may compose correspondences through the tensor product of bimodules.\ Precisely, let $X$ be a  Hilbert $A$-$B$-bimodule inducing $\phi\in\Corr(A,B)$, and let $E$ be a Hilbert $B$-$C$-bimodule inducing $\psi\in\Corr(B,C)$.\ Then $X\boxtimes E$ gives a Hilbert $A$-$C$-bimodule which induces an element in $\Corr(A,C)$ denoted by $\psi\circ\phi$. Although the theories of bimodules and correspondences are equivalent, we sometimes choose to work with correspondences as the composition resembles composition of $^*$-homomorphisms between $\C$-algebras in a covariant manner.
\subsection{\texorpdfstring{$\C$}{C}-tensor categories}
Throughout this section we will assume that the reader is familiar with the standard language of category theory. For a category $\cC$ we will use capital letters e.g. $X,Y,$ and $Z$ to denote objects of the category. The space of morphisms between two objects $X,Y\in \cC$ will be denoted by $\Hom(X,Y)$. All categories in this section will be \emph{$\mathbb{C}$-linear}, that is an additive category such that its space of morphisms $\Hom(X,Y)$ between any two objects $X,Y$ is a $\mathbb{C}$-vector space and the composition of morphisms yields a bilinear map $\Hom(X,Y)\times \Hom(Y,Z)\rightarrow \Hom(X,Z)$ for any $X,Y,Z\in \cC$. 

We invite the reader to recall the definition of a $\C$-category from \cite{GLR85}.
Let $\cC$ and $\mathcal{D}$ be $\C$-categories.\ A functor $F:\cC\rightarrow \mathcal{D}$ is called a \emph{$\C$-functor} if the induced mappings  $\Hom(X,Y)\to\Hom(F(X),F(Y))$ are $\mathbb{C}$-linear and $^*$-preserving.\ A natural transformation $\nu:F\rightarrow G$ between $\C$-functors is called an \emph{isometry} if $\nu_X^*\nu_X=\id_{F(X)}$ for all $X\in \cC$. Moreover, $\nu$ is called a \emph{unitary} if it is a surjective isometry in which case $\nu_X\nu_X^*=\id_{G(X)}$ for all $X\in\cC$.\ We are interested in C$^*$-categories that admit tensor product structures.
\begin{defn}{(see for example \cite{JP16})}\label{defn:C*tensor}
    A \emph{$\rm{C}^*$-tensor category} is a $\rm{C}^*$-category $\mathcal{C}$ together with a $\mathbb{C}$-linear bifunctor $-\otimes-:\mathcal{C}\times\mathcal{C} \rightarrow \mathcal{C}$, a distinguished object $1_\mathcal{C} \in \cC$ and unitary natural isomorphisms   
        \begin{align}
            \alpha_{X,Y,Z}&: (X \otimes Y) \otimes Z \rightarrow X \otimes (Y \otimes Z),\\
            \lambda_X&: (1_\mathcal{C} \otimes X) \rightarrow X,\nonumber\\
            \rho_X&: ( X \otimes 1_\mathcal{C}) \rightarrow X,\nonumber
        \end{align} 
such that $(\phi \otimes \psi)^* = (\phi^* \otimes \psi^*)$ and the following diagrams commute for any $X,Y,Z,W\in \cC$
    
\begin{equation}
\begin{tikzcd}[column sep=0.3em]
 {}
&((W\otimes X)\otimes Y)\otimes Z \arrow{dl}{\alpha_{W,X,Y}\otimes \operatorname{id_Z}} \arrow{drr}{\alpha_{W\otimes X,Y,Z}}
& {}  \\
(W\otimes(X\otimes Y))\otimes Z \arrow{d}{\alpha_{W,X\otimes Y,Z}}
& & & (W\otimes X)\otimes (Y\otimes Z) \arrow{d}{\alpha_{W,X,Y\otimes Z}} \\
W\otimes ((X\otimes Y)\otimes Z) \arrow{rrr}{\operatorname{id_W}\otimes \alpha_{X,Y,Z}}
& & & W\otimes (X\otimes(Y\otimes Z)),
\end{tikzcd}
\end{equation}
\vspace{-1em}

\begin{equation}
\begin{tikzcd}
(X\otimes 1_{\mathcal{C}})\otimes\arrow{rr}{\alpha_{X,1,Y}}\arrow{dr}[swap]{\rho_X\otimes \id_Y} Y& {} & X\otimes (1_{\mathcal{C}}\otimes Y)\arrow{dl}{\id_X\otimes\lambda_Y}\\
{}&X\otimes Y.&{}
\end{tikzcd}
\end{equation}
and $\Hom(1_{\cC},1_{\cC})\cong \mathbb{C}$. Moreover, $\cC$ is said to be \emph{semisimple} if there exists a collection of objects $X_i\in \cC$ with $\Hom(X_i,X_j)\cong \delta_{ij}\mathbb{C}$ and any object $X\in \cC$ can be decomposed uniquely as a finite direct sum $X\cong \bigoplus_i X_i^{\oplus m_i}$, where $m_i$ is the multiplicity of each irreducible $X_i$.
\end{defn}
We call the structure morphism $\alpha$ as in Definition \ref{defn:C*tensor} associated to a C$^*$-tensor category its \emph{associator} and the maps $\lambda$ and $\rho$ the \emph{unitors}. We call an object $X\in \cC$ such that $\Hom(X,X)\cong \mathbb{C}$ \emph{irreducible} or \emph{simple}. For a C$^*$-tensor category $\cC$, we will denote by $\Irr(\cC)$ a collection of isomorphism class representatives for simple objects in $\cC$.
\begin{example}\label{exm:HilbG}
    Let $\Gamma$ be a countable discrete group (or more generally a countable, discrete monoid). We denote by $\Hilb(\Gamma)$ the semisimple C$^*$-tensor category whose objects are finite-dimensional $\Gamma$-graded Hilbert spaces, i.e. finite-dimensional Hilbert spaces $\mathcal{H}$ endowed with a decomposition $\mathcal{H}=\bigoplus_{g\in \Gamma} \mathcal{H}_g$. The morphisms are linear maps that preserve the $\Gamma$-grading. The tensor product is the usual Hilbert space tensor product with the grading defined by $$(\mathcal{H}\otimes\mathcal{K})_g=\bigoplus_{hk=g}\mathcal{H}_h\otimes \mathcal{K}_{k}.$$The isomorphism classes of simple objects in this category are indexed by group elements in $\Gamma$. We denote these graded Hilbert spaces by $\mathbb{C}_g$ where
    $$(\mathbb{C}_g)_h=\begin{cases}\mathbb{C}\ \text{if}\ g=h,\\
    0\ \text{otherwise}.
    \end{cases}$$ If $\Gamma$ is a countable discrete group and $\omega\in Z^3(\Gamma,\T)$ is a normalised $3$-cocycle (see \cite[Section III.1]{BR82} for definitions), the category $\Hilb(\Gamma,\omega)$ is defined exactly as is $\Hilb(\Gamma)$ but with associators now given by
    $$\alpha_{\mathcal{H},\mathcal{K},\mathcal{L}}:(\xi\boxtimes\eta)\boxtimes\mu \mapsto \omega(g,h,k) \xi\boxtimes(\eta\boxtimes\mu)$$
    for $\xi\in \mathcal{H}_g,\eta\in \mathcal{K}_h$ and $\mu\in \mathcal{H}_k$.
    \par For the remainder of this paper we will only make use of this example when $\Gamma$ is a countable discrete group.\ However, our results also apply to the case of monoids.
\end{example}
We now introduce the most important example for our purposes. 
\begin{example}
 Let $A$ be a C$^*$-algebra. We denote by $\Corr_0(A)$ the C$^*$-tensor category whose objects are non-degenerate $A$-$A$-correspondences and whose morphisms are adjointable bimodule maps between the underlying Hilbert bimodules. The tensor product of two $A$-$A$-correspondences $\varphi$ with $\psi$ is given by their composition $\varphi\circ\psi$. The tensor identity of $\Corr_0(A)$ is given by the identity homomorphism $\id_A$, the associator is given by the rebracketing morphism associated to the underlying tensor product of Hilbert bimodules.
\end{example}
\begin{rmk}\label{rmk: composingcorr}
In general, $\Corr(A)$ is not a C$^*$-tensor category as there is no tensor unit on degenerate correspondences; $A$ does not act as a unit on a degenerate correspondence. It is a non-unital C$^*$-tensor category; this is a weakening of Definition \ref{defn:C*tensor} which omits the necessity of a tensor unit.
\end{rmk}
\subsection{Szabó's cocycle category}
Before we begin our discussion on actions of $\C$-tensor categories, we recall the case of a twisted action by a second-countable locally compact group $\Gamma$. We shall later identify this, in the case when $\Gamma$ is countable discrete, with actions of the C$^*$- tensor category $\Hilb(\Gamma)$.

\begin{defn}[cf. {\cite[Definition 1.1]{cocyclecategszabo}}]
Let $\Gamma$ be a locally compact group and $A$ be a $\C$-algebra.\ A \emph{twisted action} of $\Gamma$ on $A$ is a pair $(\alpha,\mathfrak{u})$, where $\alpha:\Gamma\to \Aut(A)$ is a point-norm continuous map, and $\mathfrak{u}:\Gamma\times \Gamma\to \mathcal{U}(\mathcal{M}(A))$ is a strictly continuous map satisfying 
\begin{equation}\label{twistedaction1}
    \alpha_1=\id_A, \quad \Ad(\mathfrak{u}_{g,h})\circ\alpha_g\circ\alpha_h= \alpha_{gh}
\end{equation} and 

\begin{equation}\label{twistedaction2}
    \mathfrak{u}_{g,1}=\mathfrak{u}_{1,g}= 1, \quad \mathfrak{u}_{k,gh}\alpha_k(\mathfrak{u}_{g,h})=\mathfrak{u}_{kg,h}\mathfrak{u}_{k,g}
    \end{equation}
for all $k,g,h\in \Gamma$.
\end{defn}

\begin{rmk}\label{rmk: DiffFormulaeGroupCase}
Note that the formulae above differ slightly from the definition of a twisted action in \cite{cocyclecategszabo}. In fact, the sole difference is that our \emph{unitary cocycles} $\mathfrak{u}_{g,h}$ for $g,h\in \Gamma$ are the adjoints of the cocycles in \cite{cocyclecategszabo}. The reason for this change of conventions will be discussed in Section \ref{sect: BuildingActions}.
\end{rmk}
A triple $(A,\alpha,\mathfrak{u})$ as above is called a \emph{twisted $\Gamma$-$\C$-algebra}.\ If the cocycle is trivial i.e. $\mathfrak{u}_{g,h}=\mathfrak{1}$ for all $g,h\in \Gamma$, then $(A,\alpha)$ is said to be a $\Gamma$-$\C$-algebra. 

\begin{defn}[cf. {\cite[Definition 1.10]{cocyclecategszabo}}]\label{def: cocyclemorGroups}
    Let $(\alpha,\mathfrak{u}): \Gamma\curvearrowright A$ and $(\beta,\mathfrak{v}): \Gamma\curvearrowright B$ be two twisted actions on $\C$-algebras $A$ and $B$ respectively.
\begin{enumerate}
 \item A \emph{cocycle representation} from $(A,\alpha,\mathfrak{u})$ to $(B,\beta,\mathfrak{v})$ is a pair $(\psi,\mathbbm{v})$, where
$\psi: A\to \mathcal{M}(B)$ is an extendible $^*$-homomorphism and $\mathbbm{v}: \Gamma\to\mathcal{U}(\mathcal{M}(B))$ is a strictly continuous map such that 
\begin{equation} \label{cocyclerep1}
\beta_g\circ\psi = \Ad(\mathbbm{v}_g)\circ\psi\circ\alpha_g
\end{equation}
and
\begin{equation} \label{cocyclerep2}
\psi^{\dagger}(\mathfrak{u}_{g,h}) = \mathbbm{v}_{gh}^*\mathfrak{v}_{g,h}\beta_g(\mathbbm{v}_h)\mathbbm{v}_g
\end{equation}
for all $g,h\in \Gamma$.

\item A \emph{cocycle morphism} from $(A,\alpha,\mathfrak{u})$ to $(B,\beta,\mathfrak{v})$ is a cocycle representation $(\psi,\mathbbm{v})$ as above, with the additional requirement that $\psi(A)\subseteq B$.
\end{enumerate}
\end{defn}

\begin{rmk}
Due to our change of conventions when defining twisted actions we need to change the definition of a cocycle representation. The formula in \eqref{cocyclerep2} differs from {\cite[Definition 1.10]{cocyclecategszabo}} precisely by taking adjoints.
\end{rmk}

As shown in \cite{cocyclecategszabo}, there exists a category with objects being twisted $\Gamma$-$\C$-algebras and morphisms being cocycle morphisms with composition defined in {\cite[Proposition 1.15]{cocyclecategszabo}}. This category is denoted by $\C_{\Gamma,t}$ (\cite[Definition 1.16]{cocyclecategszabo}).

Our attention will now focus on generalising this construction to actions of semisimple $\C$-tensor categories.

\section{Actions of \texorpdfstring{$\C$}{C}-tensor categories}\label{sect: BuildingActions}

For $\C$-tensor categories $(\mathcal{C},\otimes)$ and $(\mathcal{D},\boxtimes)$, $F:\mathcal{C}\to\mathcal{D}$ is said to be a $\C$-tensor functor if it is a $\C$-functor such that $F(1_{\cC})=1_{\mathcal{D}}$ and there exists a unitary natural isomorphism $J_{X,Y}:F(X)\boxtimes F(Y)\to F(X\otimes Y)$ such that 

\begin{equation}\label{diagramJmaps}
\begin{tikzcd}[column sep=5em]
(F(X)\boxtimes F(Y))\boxtimes F(Z)\arrow[r,"\alpha_{F(X),F(Y),F(Z)}"]\arrow[d,"J_{X,Y}\boxtimes\id_{F(Z)}"]  & F(X)\boxtimes (F(Y) \boxtimes F(Z)) \arrow[d,"\id_{F(X)}\boxtimes J_{Y,Z}"] \\
F(X \otimes Y)\boxtimes F(Z) \arrow[d,"J_{X \otimes Y,Z}"] & F(X)\boxtimes F(Y \otimes Z) \arrow[d,"J_{X, Y\otimes Z}"]\\
F((X \otimes Y)\otimes Z )\arrow[r,"F(\alpha_{X,Y,Z})"] & F(X\otimes (Y \otimes Z))
\end{tikzcd}
\end{equation}
commutes for all $X,Y,Z\in \mathcal{C}$.

\par In the following definition we denote by $\Corr_0^{\sep}(A)$ the full subcategory of $\Corr_0(A)$ consisting of bimodules with countable dense subsets. Moreover, for a $\C$-tensor category $\cC$, we denote by $\cC^{\rev}$ the $\C$-tensor category whose underlying category is $\cC$, but the tensor product is reversed i.e. $X\otimes^{\rev}Y=Y\otimes X$.
\begin{defn}\label{categaction}
A $\C$-tensor category $\mathcal{C}$ is said to act on a $\C$-algebra $A$ if there exists a $\C$-tensor functor $F:\mathcal{C}^{\rev}\to \Corr_0(A)$. If $A$ is separable we further impose that $F$ is valued in $\Corr_0^{\sep}(A)$. We will often denote this by $\mathcal{C}\overset{F}{\curvearrowright} A$ or by the triple $(A,F,J)$, where $$J=\{J_{X,Y}:F(Y)\boxtimes F(X)\to F(X\otimes Y)\}_{X,Y\in \cC }$$ is the natural isomorphism associated with the functor $F$. In this case, we say that the triple $(A,F,J)$ is a $\cC$-$\C$-algebra.
\end{defn}

\begin{rmk}
Often in the literature an action of $\cC$ on $A$ is given instead by a C$^*$-tensor functor $F:\cC\rightarrow \Corr_0(A)$. We choose to define it as a functor from $\cC^{\rev}$ so that an action of a countable discrete group $\Gamma$ induces an action of the category $\Hilb(\Gamma)$ (see Example \ref{exn:groupaction}).
\end{rmk}

\begin{rmk}\label{rmk: functornondegimage}
In the literature, the main interest is actions of \emph{unitary} tensor categories on C$^*$-algebras (see e.g. \cite{EqKK,limitfusioncategclassif,JP16,nonunitalcateg,IntertTomatsu,RHPBN23}). This is because unitary tensor categories axiomatise the standard invariant in subfactor theory and can be thought of as the mathematical objects encoding the symmetry in finite index inclusions of C$^*$-algebras. If $\cC$ is a unitary tensor category, $A$ is separable, and $(F,J):\cC\rightarrow \Corr_0(A)$ is a C$^*$-tensor functor, then the bimodule associated to the correspondence $F(X)$ for any $X\in \cC$ is of finite index and hence it has a countable dense subset by
{\cite[Corollary 2.24]{nonunitalcateg}}.
Therefore, $(F,J)$ automatically falls into $\Corr_0^{\sep}(A)$.
\end{rmk}
\begin{example}\label{exn:groupaction}
Let $\Gamma$ be a a countable discrete group and let $(\alpha,\mathfrak{u}):\Gamma\rightarrow \Aut(A)$ be a twisted action. The pair $(\alpha,\mathfrak{u})$ will induce a C$^*$-tensor functor $(\alpha,\mathfrak{u}):\Hilb(\Gamma)^{\rev}\rightarrow \Corr_0(A)$ by setting 
\begin{align}
&\alpha(\mathbb{C}_g)={}_{\alpha_g}A,\label{formulaeHilbGfunctor:1}\\
&\mathfrak{u}_{\mathbb{C}_g,\mathbb{C}_h}(a\boxtimes b)=\mathfrak{u}_{g,h}\alpha_g(a)b.\label{formulaeHilbGfunctor:2}
\end{align}
The functor may then be extended by linearity to all of $\Hilb(\Gamma)^{\rev}$ in a similar manner to \cite[Proposition 5.6]{EVGI21}. In general, actions of $\Hilb(\Gamma)$ on $A$ correspond to twisted actions of $\Gamma$ on $A\otimes\mathbb{K}$.
\end{example}

If $A$ is a $\C$-algebra, then its \emph{sequence algebra} $A_{\infty}$ is defined by $$A_{\infty}=\ell^{\infty}(\mathbb{N},A)/\Big\{(a_n)_{n\geq 1} : \lim\limits_{n\to\infty}\|a_n\|=0\Big\}.$$

We will end this section by showing that if $\mathcal{C}\overset{F}{\curvearrowright} A$ is an action on a separable $\C$-algebra $A$, then we can induce an action of $\cC$ on its sequence algebra $A_{\infty}$ whenever the image bimodules are finite rank in the following sense.
\begin{defn}
We call a right Hilbert module $E$ over a C$^*$-algebra $A$ \emph{finite rank} if there exist $n\in \N$ and a projection $p\in M_n(\mathcal{M}(A))$ such that $E\cong pA^n$.
In fact, $E$ is finite rank if and only if there exists a collection of elements $\xi_i\in \mathcal{L}(A,E)$ for $1\leq i\leq n$ such that
\begin{equation}\label{eqn:basis}
\sum_{i=1}^n\xi_i\xi_i^*=\id_{E}.
\end{equation}
\end{defn}
We call $\xi_i$ a \emph{basis} for $E$. We say that $(F,J)$ is \emph{finite rank} if each $F(X)$ for $X\in \Irr(\cC)$ is finite rank.

Suppose that $F:\cC^{\rev}\to\Corr_0^{\sep}(A)$ is a $\C$-tensor functor. Our goal is to build a $\C$-tensor functor $F_{\infty}:\cC^{\rev}\to \Corr_0(A_{\infty})$.

For any $X\in\cC$, we can view $F(X)$ as a non-degenerate Hilbert $A$-bimodule. Define 
\begin{equation}\label{eq: SeqAlgBimodule}
F_{\infty}(X)= \ell^{\infty}(\mathbb{N},F(X))/ \Big\{(\xi_n)_{n\geq 1} : \lim_{n\to\infty}\|\xi_n\|=0\Big\},
\end{equation} with the left and right actions of $A_\infty$ given pointwise. Precisely, for any $a\in A_\infty$ and any $\xi\in F_{\infty}(X)$ represented by $(a_n)_{n\geq 1}$ and $(\xi_n)_{n\geq 1}$, we let
\begin{equation}\label{eq: SeqAlgActions}
a\rhd\xi = (a_n\rhd \xi_n)_{n\geq 1} \quad \AND \quad \xi\lhd a=(\xi_n\lhd a_n)_{n\geq 1}.
\end{equation} Similarly, for any $\xi,\eta\in F_{\infty}(X) $ we define 
\begin{equation}\label{eq: InnerProdSeqAlg}
\langle \xi,\eta\rangle_{A_\infty} = \big(\langle \xi_n,\eta_n\rangle_A\big)_{n\geq 1}.
\end{equation}

\begin{lemma}\label{lemma: ConstructionCheck}
$F_{\infty}(X)$ is a right-Hilbert $A_{\infty}$-module.  
\end{lemma}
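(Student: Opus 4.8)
The plan is to verify that $F_{\infty}^{(0)}(X)$ satisfies properties \ref{item:1Hilbert}--\ref{item:4Hilbert} of Definition \ref{defn:rightHilbertmodules}, i.e.\ that it is a pre-Hilbert $A_{\infty}$-module, with all the structure arising pointwise from the structure on $F(X)$. The one genuinely substantive preliminary is to check that the definitions \eqref{eq: SeqAlgActions} and \eqref{eq: InnerProdSeqAlg} are \emph{well-defined}, namely that the right $A_{\infty}$-action and the $A_{\infty}$-valued inner product descend to the quotient by null sequences. For this I would first observe that a bounded sequence $(\xi_n)_{n\geq 1}\in\ell^{\infty}(\N,F(X))$ genuinely defines a bounded sequence after applying the actions or the inner product: using $\|\xi_n\lhd a_n\|\leq\|\xi_n\|\|a_n\|$, $\|a_n\rhd\xi_n\|\leq\|a_n\|\|\xi_n\|$ (the left action is by adjointable operators of norm at most $\|a_n\|$), and $\|\langle\xi_n,\eta_n\rangle_A\|\leq\|\xi_n\|\|\eta_n\|$. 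Then, to see the operations descend to the quotient, I would check that if $(\xi_n)$ is a null sequence then so are $(\xi_n\lhd a_n)$, $(a_n\rhd\xi_n)$, and $(\langle\xi_n,\eta_n\rangle_A)$ for any bounded $(a_n)$, $(\eta_n)$; this follows from the same norm estimates. Symmetrically one handles null perturbations in the second argument.

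Once well-definedness is established, the remaining axioms are inherited pointwise and require only routine verification. Concretely: property \ref{item:1Hilbert} (conjugate-linearity in the first variable, linearity in the second) and property \ref{item:2Hilbert} ($\langle\xi,\eta a\rangle_{A_{\infty}}=\langle\xi,\eta\rangle_{A_{\infty}}a$) follow coordinatewise from the corresponding identities in $F(X)$. For property \ref{item:4Hilbert} ($\langle\xi,\eta\rangle_{A_{\infty}}=\langle\eta,\xi\rangle_{A_{\infty}}^*$) I would use that the involution on $A_{\infty}$ is computed representative-wise, so the identity reduces to $\langle\xi_n,\eta_n\rangle_A=\langle\eta_n,\xi_n\rangle_A^*$ for each $n$. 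The right $A_{\infty}$-module axioms (associativity and additivity of the action) likewise pass to the quotient from the right $A$-module structure on $F(X)$.

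The only axiom demanding slightly more care is \ref{item:3Hilbert}, positivity and definiteness of the inner product. Positivity is straightforward: since each $\langle\xi_n,\xi_n\rangle_A\geq 0$ in $A$, the representative $(\langle\xi_n,\xi_n\rangle_A)_{n\geq1}$ is a bounded sequence of positive elements, and positivity is preserved under the quotient map $\ell^{\infty}(\N,A)\to A_{\infty}$ (a quotient of $\C$-algebras), so $\langle\xi,\xi\rangle_{A_{\infty}}\geq0$. For definiteness, suppose $\langle\xi,\xi\rangle_{A_{\infty}}=0$; this means $\lim_{n\to\infty}\|\langle\xi_n,\xi_n\rangle_A\|=0$, i.e.\ $\lim_{n\to\infty}\|\xi_n\|^2=0$ by the definition of the norm on $F(X)$, which says precisely that $(\xi_n)$ is a null sequence and hence $\xi=0$ in $F_{\infty}^{(0)}(X)$. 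I expect this last equivalence---between vanishing of the image of the inner product in $A_{\infty}$ and vanishing of the class in $F_{\infty}^{(0)}(X)$---to be the main conceptual point, though it reduces quickly to the compatibility of the two quotient constructions via the norm identity $\|\langle\xi_n,\xi_n\rangle_A\|=\|\xi_n\|^2$. Note that I would only claim the \emph{pre}-Hilbert structure here (completeness, property \ref{item:5Hilbert}, is not part of the statement), so I would not address closedness; the quotient $F_{\infty}^{(0)}(X)$ need not be complete in the inner-product norm, which is presumably why the superscript $(0)$ and the subsequent passage to a completion are flagged by the notation.
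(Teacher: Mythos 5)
Your proposal is correct and follows essentially the same route as the paper: check well-definedness of the right action and inner product on the quotient via the standard norm estimates (the paper uses $\|\langle \xi_n\lhd(a_n-a_n'),\xi_n\lhd(a_n-a_n')\rangle_A\|\leq\|\langle\xi_n,\xi_n\rangle_A\|\,\|a_n-a_n'\|^2$ and Cauchy--Schwarz, which is exactly your submultiplicativity argument), then observe that the remaining axioms hold pointwise, with definiteness reducing to $\|\langle\xi_n,\xi_n\rangle_A\|=\|\xi_n\|^2$. The only superfluous item is your verification of the left action, which the paper defers to the subsequent lemma since this statement concerns only the right module structure.
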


\begin{proof}
First, we need to check that the formulae in \eqref{eq: SeqAlgActions} are well-defined. For this, suppose the sequences $(a_n)_{n\geq 1}$ and $(a_n')_{n\geq 1}$ induce the same element in $A_\infty$, and let $\xi\in F_{\infty}(X)$ be represented by the sequence $(\xi_n)_{n\geq 1}$. Then, a direct calculation shows that $$\|\langle \xi_n\lhd (a_n-a_n'), \xi_n\lhd (a_n-a_n')\rangle_A\|\leq\|\langle \xi_n,\xi_n\rangle_A\|\|a_n-a_n'\|^2,$$ which converges to $0$ as $n\to\infty$. Exactly the same calculation shows that if $(\xi_n)_{n\geq 1}$ and $(\xi_n')_{n\geq 1}$ induce the same element in $F_{\infty}(X)$ and $(a_n)_{n\geq 1}\in A_\infty$, then $(\xi_n\lhd a_n)_{n\geq 1}=(\xi_n'\lhd a_n)_{n\geq 1}$ as elements in $F_{\infty}(X)$. Thus, \eqref{eq: SeqAlgActions} gives a well-defined right action of $A_\infty$. 

We now check that \eqref{eq: InnerProdSeqAlg} gives a well-defined right inner product. First, if $(\xi_n)_{n\geq 1}$ and $(\xi_n')_{n\geq 1}$ induce $\xi$, and $(\eta_n)_{n\geq 1}$ and $(\eta_n)_{n\geq 1}'$ induce $\eta$, then $$\langle \xi_n,\eta_n\rangle_{A}- \langle \xi_n',\eta_n'\rangle_A= \langle \xi_n-\xi_n',\eta_n\rangle_A+\langle \xi_n',\eta_n-\eta_n'\rangle_A,$$ which converges to $0$ by Cauchy-Schwarz.\ Moreover, the sequences $\big(\langle \xi_n,\eta_n\rangle_A\big)_{n\geq 1}$ and $\big(\langle \xi_n',\eta_n'\rangle_A\big)_{n\geq 1}$ are bounded, so induce the same element in $A_\infty$.

Therefore, the function $\langle \cdot, \cdot\rangle_{A_\infty}: F_{\infty}(X)\times F_{\infty}(X)\to A_{\infty}$ is well-defined. It is now straightforward to check that this function is right linear, left conjugate linear, and antisymmetric since all these properties are satisfied pointwise for each $n\in\mathbb{N}$. Finally, it is clear that $\langle \xi,\xi\rangle_{A_\infty} \geq 0$ and that $\langle \xi,\xi\rangle_{A_\infty} = 0$ if and only if $\langle \xi_n,\xi_n\rangle_A$ converges to $0$ i.e. $\xi=0$ in $F_{\infty}(X)$. Since, $F_{\infty}(X)$ is complete with respect to the inner product defined in \eqref{eq: InnerProdSeqAlg} (as a quotient of a complete space by a closed subspace), it is a right-Hilbert $A_{\infty}$-bimodule. 
\end{proof}

\begin{lemma}
$F_{\infty}(X)$ is a non-degenerate Hilbert $A_\infty$-bimodule.
\end{lemma}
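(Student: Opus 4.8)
The plan is to verify the two claims separately: first that $F_\infty(X)$ is a genuine Hilbert $A_\infty$-module (completeness of the right inner product structure), and second that it is non-degenerate as an $A_\infty$-bimodule, meaning both $F_\infty(X) A_\infty$ and $A_\infty F_\infty(X)$ are dense in $F_\infty(X)$. By Lemma \ref{lemma: ConstructionCheck}, $F_\infty^{(0)}(X)$ is already a right pre-Hilbert $A_\infty$-module, so after taking the completion $F_\infty(X)$ the right Hilbert module axioms \ref{item:1Hilbert}--\ref{item:5Hilbert} hold automatically, with the right $A_\infty$-action extending by continuity. The first step I would therefore carry out is to confirm that the left $A_\infty$-action defined pointwise in \eqref{eq: SeqAlgActions} consists of adjointable operators, so that $F_\infty(X)$ becomes a Hilbert $A_\infty$-bimodule in the sense of Definition \ref{nonunitalbimodules}. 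For each $a \in A_\infty$ represented by $(a_n)_{n\geq 1}$, the operator $\xi \mapsto a \rhd \xi$ has as adjoint the operator of left multiplication by $a^*$, which one checks pointwise using the fact that each left $A$-action on $F(X)$ is adjointable with $\phi(a_n)^* = \phi(a_n^*)$; boundedness follows from the uniform bound $\sup_n \|a_n\| < \infty$.

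The second, and main, task is non-degeneracy. By the Remark following Definition \ref{def:nondegbim}, density of $F_\infty(X) A_\infty$ is automatic for any right Hilbert module, so the only substantive point is density of $A_\infty F_\infty(X)$ in $F_\infty(X)$. Here I would exploit that $F(X)$ is itself a non-degenerate Hilbert $A$-bimodule, so $A \cdot F(X)$ is dense in $F(X)$. The natural candidate for an approximate unit is $\bar{e} = (e_n)_{n\geq 1} \in A_\infty$, where $(e_n)$ is (the constant-shape lift of) an approximate unit $(e_\lambda)$ for $A$; more carefully, since $A$ is separable one can choose a \emph{sequential} approximate unit $(e_k)_{k\geq 1}$ for $A$, and then the element of $A_\infty$ represented by $(e_n)_{n\geq 1}$ serves as a unit acting on the relevant elements. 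Given $\xi \in F_\infty^{(0)}(X)$ represented by $(\xi_n)_{n\geq 1}$ with $M = \sup_n \|\xi_n\| < \infty$, I would estimate $\|\bar e \rhd \xi - \xi\|$ via the pointwise identity
\begin{equation*}
\langle e_n \rhd \xi_n - \xi_n, \, e_n \rhd \xi_n - \xi_n\rangle_A = \langle \xi_n, (e_n - 1)^2 \rhd \xi_n\rangle_A,
\end{equation*}
interpreted in the unitisation, and use non-degeneracy of $F(X)$ over $A$ to control each term. The density statement then follows by approximating arbitrary elements of $F_\infty(X)$ by elements of the dense subspace $F_\infty^{(0)}(X)$ and passing to the limit.

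The step I expect to be the main obstacle is precisely this non-degeneracy argument: unlike the pointwise algebraic checks in Lemma \ref{lemma: ConstructionCheck}, establishing $A_\infty F_\infty(X)$ dense requires producing a single element of $A_\infty$ that acts as an approximate unit \emph{uniformly} across the sequence index $n$, which is where separability of $A$ (giving a sequential, hence diagonalisable, approximate unit) is essential. One must be careful that the approximate unit chosen for a fixed $\xi$ genuinely brings $\|e_n \rhd \xi_n - \xi_n\|$ close to zero uniformly in $n$; a naive constant sequence need not do this, so I anticipate having to invoke a diagonal or quasicentral approximate unit argument, or else to first reduce to elements $\xi$ of the special form $\eta \lhd a$ (dense by the automatic right non-degeneracy) for which the left approximation is more transparent. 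Once non-degeneracy is secured, Lemma \ref{lemma:ActionMultiplierAlg} applies and one obtains the extended $\mathcal{M}(A_\infty)$-bimodule structure as a bonus, completing the construction.
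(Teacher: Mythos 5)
Your proof is correct and follows essentially the same route as the paper, whose own proof simply records pointwise well-definedness and adjointability of the left $A_\infty$-action and then asserts non-degeneracy of $F_\infty(X)$ "as $F(X)$ is". You are right that left non-degeneracy is the one point needing a genuine argument, and of the two fixes you sketch it is the diagonal one that works (given $\xi=(\xi_n)\in F_\infty^{(0)}(X)$ and $\varepsilon>0$, pick for each $n$ some element $e_{\lambda(n)}$ of an approximate unit of $A$ with $\|e_{\lambda(n)}\rhd\xi_n-\xi_n\|<\varepsilon$; then $(e_{\lambda(n)})_{n\geq 1}\in A_\infty$ does the job, so neither sequentiality of the approximate unit nor quasicentrality is actually needed), whereas reducing to elements of the form $\eta\lhd a$ does not interact with the left action and would not help.
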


\begin{proof}
As in the proof of Lemma \ref{lemma: ConstructionCheck} we have that \eqref{eq: SeqAlgActions} gives a well-defined left action by $A_\infty$.\ Since the left action of $A$ on $F(X)$ is adjointable, the left action of $(a_n)_{n\geq 1}\in A_\infty$ is an adjointable operator with adjoint given by the action of $(a_n^*)_{n\geq 1}$. Moreover, $F_\infty(X)$ is non-degenerate for any $X\in \cC$ as $F(X)$ is. 
\end{proof}

Consider the functor $F_\infty:\cC^{\rev}\to \Corr_0(A_\infty)$ defined by sending any $X\in\cC$ to the correspondence induced by the Hilbert $A_\infty$-bimodule $F_\infty(X)$, and any $f\in \Hom(X,Y)$ for $X,Y\in \cC$ to the intertwiner defined on $F_\infty(X)$ by $F_{\infty}(f)((\xi_n)_{n\geq 1})=(F(f)(\xi_n))_{n\geq 1}$. Also, one can define $J_{X,Y}^{\infty}:F_\infty(Y)\boxtimes F_\infty(X)\to F_\infty(X\otimes Y)$ as the unique continuous extension of the map $$J_{X,Y}^{\infty}(\xi\boxtimes\eta)=(J_{X,Y}(\xi_n\boxtimes\eta_n))_n,$$ for any $\xi\in F_\infty(Y)$, any $\eta\in F_\infty(X)$ (that this is well defined follows in a similar fashion as the arguments in the proof of Lemma \ref{lemma: ConstructionCheck}). Note that $J^{\infty}_{X,Y}$ is isometric for all $X,Y\in \cC$.

\begin{lemma}\label{lemma: TensorFunctor}
Let $(F,J,A)$ be a finite rank action of a C$^*$-tensor category $\cC$. Then $\cC$ acts on $A_\infty$ via the triple $(A_\infty, F_\infty, J^{\infty})$.
\end{lemma}

\begin{proof}
It follows from construction that $F_\infty$ is a C$^*$-functor. Moreover, the naturality of $J^{\infty}$ and that $J^{\infty}$ satisfies commuting diagrams as in \eqref{diagramJmaps} follows from direct computations. It remains to show that $J_{X,Y}^\infty$ is surjective. Let $\iota_{X,Y}:F_{\infty}(Y) \boxtimes F_{\infty}(X) \rightarrow (F(Y)\boxtimes F(X))_\infty$ be the canonical inclusion maps and $T_{X,Y}:(F(Y)\boxtimes F(X))_\infty\rightarrow F_{\infty}(X\otimes Y)$ be the bounded bimodule map defined by $T(\xi)_n=J_{X,Y}(\xi_n)$ for $\xi=(\xi_n)_{n\geq 0}\in (F(Y)\boxtimes F(X))_\infty$. The composition $T_{X,Y}\circ \iota_{X,Y}$ coincides with $J_{X,Y}^{\infty}$ so it suffices to show the surjectivity of both $T_{X,Y}$ and $\iota_{X,Y}$. 
\par Firstly $T_{X,Y}$ is an adjointable unitary with adjoint defined by the mapping $(\xi_n)_{n\geq 0}\mapsto (J_{X,Y}^*(\xi_n))_{n\geq 0}$ for $(\xi_n)_{n\geq 0}\in F_{\infty}(X\otimes Y)$. It remains to show the surjectivity of $\iota_{X,Y}$. Let $X,Y \in \cC$ and let $\xi_i^{Y}$ with $i\in I_{Y}$ finite such that
    \[
    \sum_{i\in I_{Y}}\xi_i^{Y}{\xi_i^Y}^*=\id_{F(Y)}.
    \]
    Any $\zeta \in (F(Y)\boxtimes F(X))_{\infty}$ is represented by a sequence $(\sum_{l=1}^{k_n}y_l^{(n)}\boxtimes x_l^{(n)})_n$. By a standard reindexation one can choose an approximate unit $e_n$ for $A$ such that
    \[
    {e_n\xi_i^Y}^*(y_l^{(n)})-{\xi_i^Y}^*(y_l^{(n)})\rightarrow 0
    \]
    as $n$ tends to infinity for all $i\in I_{Y}$, $1\leq l\leq k_n$. Then one has that
    \begin{align*}
    \zeta&=\left(\sum_{l=1}^{k_n}y_l^{(n)}\boxtimes x_l^{(n)}\right)_n\\
    &=\left(\sum_{l=1}^{k_n}\sum_{i\in I_Y}\xi_i^{Y}(e_n{\xi_i^Y}^*(y_l^{(n)}))\boxtimes x_l^{(n)}\right)_n\\
    &=\left(\sum_{i\in I_Y}\xi_i^{Y}(e_n)\boxtimes \left(\sum_{l=1}^{k_n}{\xi_i^Y}^*(y_l^{(n)})x_l^{(n)}\right)\right)_n.
    \end{align*}
    Hence, it suffices to show that 
    \[
    \sum_{l=1}^{k_n}{\xi_i^Y}^*(y_l^{(n)})x_l^{(n)}
    \]
    is a bounded sequence. This follows precisely as $\zeta$ is a bounded sequence. Indeed,
    \begin{align*}
        \lVert \sum_{l=1}^{k_n}{\xi_i^Y}^*(y_l^{(n)})x_l^{(n)}\rVert^2&=\|\sum_{l,l'}\langle x_l^{(n)},({\xi_i^{Y}}^*(y_l^{(n)}))^*{\xi_i^{Y}}^*(y_{l'}^{(n)})x_{l'}^{(n)}\rangle\|\\
        &=\|\sum_{l,l'}\langle x_l^{(n)},({y_l^{(n)}}^*\xi_i^{Y}{\xi_i^{Y}}^*\boxtimes \id_{F(X)}) (y_{l'}^{(n)}\boxtimes {x_{l'}}^{(n)})\rangle\|\\
        &\leq \|\sum_{l,l'} \langle x_l^{(n)},({y_l^{(n)}}^*\boxtimes \id_{F(X)}) (y_{l'}^{(n)}\boxtimes {x_{l'}}^{(n)})\rangle\|\\
        &=\|\sum_{l,l'} \langle x_l^{(n)},\langle {y_l^{(n)}},y_{l'}^{(n)}\rangle x_{l'}^{(n)}\rangle\|\\
        &=\|\zeta_n\|^2
    \end{align*}
    and $\zeta_n$ is a bounded sequence.
\end{proof}
\begin{rmk}\label{rmk: FinGen}
If $\mathcal{C}\overset{F}{\curvearrowright} A$ is an action of a unitary tensor category (in the sense of \cite{limitfusioncategclassif} for example) on a unital C$^*$-algebra $A$, then it follows from \cite{nonunitalcateg, KW00} that $F$ is finite rank. 
\end{rmk}
\section{The generalised cocycle category}

In this section we introduce the category of $\cC$-$\C$-algebras for which we will later perform intertwining arguments. Throughout the rest of this paper $\cC$ is always assumed to be a $\C$-tensor category.

\begin{defn}\label{cocyclemorphism}
  Let $\mathcal{C}\overset{F}{\curvearrowright} A$ and $\mathcal{C}\overset{G}{\curvearrowright} B$ be actions of $\mathcal{C}$ on $\C$-algebras $A$ and $B$. 
  \begin{enumerate}
      \item A \emph{correspondence morphism} from $(A, F, J)$ to $(B, G, I)$ is a pair $(\phi,\{\mathbbm{v}_X\}_{X\in\mathcal{C}})$, where $\phi:A\to\mathcal{L}(E)$ is an $A$-$B$-correspondence and $\{\mathbbm{v}_X:\phi\circ F(X)\to G(X)\circ\phi\}_{X\in\cC}$ is a natural family of $A$-$B$-bimodule maps such that $\mathbbm{v}_X$ is an isometry (not necessarily adjointable) for any $X\in\cC$.\footnote{By an isometry, we mean a map which preserves the norm. By the proof of the theorem in \cite{Lance94}, it is equivalent to assume that it preserves the inner product.} Moreover, for all $X,Y\in\mathcal{C}$ the following pentagon diagram commutes 
\begin{equation}\label{cocyclemorphismdiagram}
\begin{adjustbox}{max width=\textwidth}
\begin{tikzcd}
& \phi\circ F(X)\circ F(Y) 
\ar{dr}{J_{X,Y}\boxtimes\id_{\phi}}
\ar[swap]{dl}{\id_{F(Y)}\boxtimes\mathbbm{v}_X} &    
\\   
G(X)\circ\phi\circ F(Y) \ar[swap]{dd}{\mathbbm{v}_Y\boxtimes\id_{G(X)}} 
& &\phi\circ F(X\otimes Y)\ar{dd}{\mathbbm{v}_{X\otimes Y}}
\\
& &
\\
G(X)\circ G(Y)\circ\phi
\ar{rr}{\id_{\phi}\boxtimes I_{X,Y}} 
&    
& G(X\otimes Y)\circ\phi, 
\end{tikzcd}
\end{adjustbox}
\end{equation} and $\mathbbm{v}_{1_\cC}:A\boxtimes {}_{\phi}E\to {}_{\phi} E\boxtimes B\cong {}_{\phi}E$ is given by $\mathbbm{v}_{1_\cC}(a\boxtimes x)=\phi(a)x$ for any $a\in A$ and $x\in {}_{\phi}E$. For convenience, we write $(\phi,\mathbbm{v})$, where $\mathbbm{v}$ denotes the collection of maps $\{\mathbbm{v}_X\}_{X\in\cC}$.\footnote{The notation of the maps in \eqref{cocyclemorphismdiagram} denotes the tensor product of bimodules, which is equivalent to composition of correspondences.}

\item A \emph{cocycle representation} $(\phi,\{\mathbbm{v}_X\}_{X\in\mathcal{C}}):(A,F,J)\to (B,G,I)$ is a correspondence morphism for which we further require that $\phi:A\to \mathcal{M}(B)$ is a $^*$-homomorphism.

\item A \emph{cocycle morphism} $(\phi,\{\mathbbm{v}_X\}_{X\in\mathcal{C}}):(A,F,J)\to (B,G,I)$ is a cocycle representation for which we further require that $\phi:A\to B$ is a $^*$-homomorphism.
  \end{enumerate}
\end{defn} 

\begin{rmk}\label{rmk: IrredDetermineCateg}
By naturality, if $\mathcal{C}$ is semisimple, any $\cC$-equivariant structure is uniquely determined by its values on $\Irr(\cC)$. In particular, for any cocycle morphism, the family of maps $\{\mathbbm{v}_X\}_{X\in\cC}$ is uniquely determined by the family of maps $\{\mathbbm{v}_X\}_{X\in\Irr(\cC)}$. 
\end{rmk}

In the case of group actions, Definition \ref{cocyclemorphism} recovers Szabó's notion of a cocycle morphism (see Definition \ref{def: cocyclemorGroups}).

\begin{example}\label{example: intertHilb(G)}
Suppose $(A,\alpha)$ and $(B,\beta)$ are actions of a countable discrete group $\Gamma$ on $\C$-algebras $A$ and $B$. Consider them as actions of $\Hilb(\Gamma)$ as in Example \ref{exn:groupaction}. Let $(\phi,\mathbbm{v}):(A,\alpha)\to (B,\beta)$ be an extendible cocycle morphism as in Definition \ref{cocyclemorphism} with $\mathbbm{v}_{\mathbb{C}g}$ being adjointable for all $g\in \Gamma$. Fix $g\in \Gamma$ and let $f_g:=T_g'\circ \mathbbm{v}_{\mathbb{C}_g}\circ S_g: {}_{\phi\circ\alpha_g}B\to {}_{\beta_g\circ\phi}B$, where $S_g:{}_{\phi\circ\alpha_g}B\to {}_{\alpha_g}A\boxtimes {}_{\phi}B $ is given by $S_g(b)=\lim\limits_{\lambda}\xi_\lambda\boxtimes b$ for any $b\in B$, where $\xi_\lambda$ is an approximate unit for $A$ and $T_g':{}_{\phi}B\boxtimes {}_{\beta_g}B\rightarrow {}_{\beta_g\circ\phi}B$ is given by $T_g'(b\boxtimes c)=\beta_g(b)c$ for $b,c\in B$.\footnote{Note that $S_g$ is well-defined by Proposition \ref{extendiblecase}.}\ Moreover, let $T_g:{}_{\alpha_g}A\boxtimes {}_{\phi}B\rightarrow {}_{\phi\circ\alpha_g}B$ be given by $T_g(a\boxtimes b)=\phi(a)b$ for any $a\in A$ and any $b\in B$.

Since $f_g$ is an adjointable bimodule map, it follows that $f_g(b)=\mathbbm{u}_gb$ for any $b\in B$, for some $\mathbbm{u}_g\in \mathcal{M}(B)$.\footnote{Note that $T_g$ is adjointable as $\phi$ is extendible (see Proposition \ref{extendiblecase}). Thus, $f_g$ is a composition of adjointable maps.}\ In particular, the equality $f_g(a\rhd b)=a\rhd f_g(b)$ implies that $$\beta_g(\phi(a))\mathbbm{u}_g=\mathbbm{u}_g\phi(\alpha_g(a))$$ for all $a\in A$. This gives \eqref{cocyclerep1}, although $\mathbbm{u}_g$ might not be a unitary.

As $S_g\circ T_g$ is the identity map, it follows that the diagram
\begin{equation}\label{eq: IntertwinersDiagramGroup}
\begin{tikzcd}
{}_{\alpha_g}A\boxtimes {}_{\phi}B
\arrow[swap]{d}{T_g}
\arrow{r}{\mathbbm{v}_{\mathbb{C}_g}}
& {}_{\phi}B\boxtimes {}_{\beta_g}B
 \\
{}_{\phi\circ\alpha_g}B
\arrow{r}{f_g}
& {}_{\beta_g\circ\phi}B\arrow{u}{(T_g')^{-1}}
\end{tikzcd}
\end{equation}commutes. Hence, $$\mathbbm{v}_{\mathbb{C}_g}(a\boxtimes b)=\lim\limits_{\lambda}\eta_\lambda\boxtimes \mathbbm{u}_g\phi(a)b,$$ for any $a\in A, b\in B$ and $\eta_\lambda$ an approximate unit for $B$. Following the pentagon diagram for $\mathbbm{v}$, one gets \eqref{cocyclerep2}. 

Conversely, if $(\phi,\mathbbm{u}):(A,\alpha)\to (B,\beta)$ is a cocycle morphism as in Definition \ref{def: cocyclemorGroups}, then define $f_g:{}_{\phi\circ\alpha_g}B\rightarrow {}_{\beta_g\circ\phi}B$ by $f_g(b)=\mathbbm{u}_gb$ for all $b\in B$ and $\mathbbm{v}_{\mathbb{C}_g}$ be given by \eqref{eq: IntertwinersDiagramGroup}. Note that \eqref{cocyclerep1} implies that $\mathbbm{v}_{\mathbb{C}_g}$ is a bimodule map, while \eqref{cocyclerep2} gives the pentagon diagram for $\mathbbm{v}_{\mathbb{C}_g}$. Hence $(\phi, \mathbbm{v})$ yields a cocycle morphism in the sense of Definition \ref{cocyclemorphism}.
\end{example}

\begin{rmk}
Note that, unlike in {\cite[Definition 1.10]{cocyclecategszabo}}, we do not require $\phi\in \Corr(A,B)$ to be extendible. In fact, this is precisely the reason why we consider the maps $\mathbbm{v}_X$ to be isometries (possibly non-adjointable) instead of unitaries. For example, the map $\mathbbm{v}_{1_{\cC}}:A\boxtimes{}_{\phi}B\to{}_{\phi}B\boxtimes B\cong{}_{\phi}B$ is given by $\mathbbm{v}_{1_{\cC}}(a\boxtimes b)=\phi(a)b$ for all $a\in A$ and $b\in B$. Therefore, it is not surjective unless $\phi$ is non-degenerate.\ Moreover, it might not be adjointable if $\phi$ is not extendible as seen in the proof of Proposition \ref{extendiblecase}.

Furthermore, the morphisms of Definition \ref{cocyclemorphism} fit into the $\cC$-equivariant $\KK$-theory developed in \cite{EqKK} (see \cite[Example 3.3]{EqKK}). In \cite{EqKK} a correspondence morphism is instead called a \emph{$\cC$-Hilbert $A$-$B$-bimodule} and a cocycle morphism is called a \emph{cocycle-$\cC$-$*$-homomorphism}.
\end{rmk}

We now define composition formulae for the various notions of morphisms in Definition \ref{cocyclemorphism}. Using the standard composition between correspondences, we can define composition of correspondence morphisms in the obvious way. Let $\mathcal{C}\overset{F}{\curvearrowright} A$, $\mathcal{C}\overset{G}{\curvearrowright} B$, and $\mathcal{C}\overset{H}{\curvearrowright} C$  be actions of $\mathcal{C}$ on $\C$-algebras $A$, $B$, and $C$ respectively. If $(\phi,\mathbbm{v}):(A,F,J)\to (B,G,I)$ and $(\psi,\mathbbm{w}):(B,G,I)\to (C,H,K)$ are correspondence morphisms, their composition is denoted by $(\psi\circ\phi, \mathbbm{w}\circ\mathbbm{v})$, where 
\begin{equation}\label{eq: CompCorrMor}
(\mathbbm{w}\circ\mathbbm{v})_X=(\id_{\phi}\boxtimes\mathbbm{w}_X)\circ(\mathbbm{v}_X\boxtimes\id_{\psi}).
\end{equation} By combining the pentagon diagrams for $\mathbbm{v}$ and $\mathbbm{w}$, one obtains that $(\psi\circ\phi,\mathbbm{w}\circ\mathbbm{v}):(A,F,J)\to (C,H,K)$ is indeed a correspondence morphism.

However, if $\phi$ and $\psi$ are possibly degenerate cocycle morphisms, the composition formula above will not give a cocycle morphism.\ This problem arises as the bimodule ${}_\phi B\boxtimes {}_\psi C$ may no longer be isomorphic to $C$ as a right $C$ module. Therefore, to form a category, we introduce a slightly different composition on cocycle morphisms.

\begin{defn}\label{def: CompCocMor}
Let $(\phi,\mathbbm{v}):(A,F,J)\to (B,G,I)$ and $(\psi,\mathbbm{w}):(B,G,I)\to (C,H,K)$ be cocycle morphisms. Let $\mathbbm{w}*\mathbbm{v}$ be the collection of isometries $\{(\mathbbm{w}*\mathbbm{v})_X\}_{X\in\cC}$ given by
\begin{equation}\label{eq: CompCocMor}
\begin{tikzcd}
F(X)\boxtimes {}_{\psi\circ\phi}C
\arrow[swap]{d}{S_X}
\arrow{r}{(\mathbbm{w}*\mathbbm{v})_X}
& {}_{\psi\circ\phi}C\boxtimes H(X)
 \\
F(X)\boxtimes {}_{\phi}B\boxtimes {}_{\psi}C
\arrow{r}{(\mathbbm{w}\circ\mathbbm{v})_X}
& {}_{\phi}B\boxtimes{}_{\psi}C\boxtimes H(X)\arrow{u}{T\boxtimes\id_{H(X)}}.
\end{tikzcd}
\end{equation} Here $S_X(x\boxtimes c)=\lim\limits_{\lambda}x\boxtimes\eta_\lambda\boxtimes c$ and $T(b\boxtimes c)=\psi(b)c$ for any $X\in\cC$, $x\in F(X)$, $b\in B$, and $c\in C$, with $\eta_\lambda$ being an approximate unit for $B$.\footnote{We have not shown that $S_X$ is well-defined at this point.}
\end{defn}

\begin{lemma}\label{lemma: SWellDef}
The continuous linear map $S_X: F(X)\boxtimes {}_{\psi\circ\phi}C\to F(X)\boxtimes{}_{\phi}B\boxtimes{}_{\psi}C$ given by $S_X(x\boxtimes c)=\lim\limits_{\lambda}x\boxtimes\eta_\lambda\boxtimes c$ for any $x\in F(X)$, any $c\in C$, and some approximate unit $\eta_\lambda$ of $B$ is a well-defined isometric bimodule isomorphism for any $X\in\cC$.
\end{lemma}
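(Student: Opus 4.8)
The plan is to show that $S_X$ is a well-defined isometric isomorphism by following the template of Proposition \ref{extendiblecase}, which handled exactly the case $X = 1_\cC$. The key observation is that $F(X) \boxtimes {}_{\psi\circ\phi}C$ should be thought of as a tensor product over $C$ where the left $C$-structure comes from composing through $B$ via $\psi$, and the map $S_X$ inserts an approximate unit of $B$ into the middle slot. So the conceptual content is that $F(X) \boxtimes {}_{\psi}(\psi(B)C) \cong F(X) \boxtimes {}_\phi B \boxtimes {}_\psi C$, i.e. that the internal $B$-tensor factor can be reconstructed. Since cocycle morphisms have $\psi$ a genuine $^*$-homomorphism $B \to C \subseteq \M(C)$, and every $^*$-homomorphism into $C$ (landing in $C$, not just $\M(C)$) is extendible — indeed $\psi(e_\lambda)$ converges strictly to a projection — the hypothesis of Proposition \ref{extendiblecase} is met.

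First I would establish well-definedness and the isometry property simultaneously by computing the norm of $S_X(x \boxtimes c)$ on elementary tensors. Concretely, for $x \in F(X)$ and $c \in C$, I would compute
\[
\langle x\boxtimes\eta_\lambda\boxtimes c, x\boxtimes\eta_\mu\boxtimes c\rangle_C = \big\langle c, \psi\big(\langle \eta_\lambda, \langle x,x\rangle_A^{1/2}\,\cdot\, \rangle_B\big)\,\cdot\, \big\rangle_C,
\]
more carefully unwinding the two inner products (first pairing in $F(X)$ producing an element of $A$, pushed through $\phi$ into $B$, then pairing the $\eta$'s in $B$, then pushing through $\psi$ into $C$). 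The Cauchy criterion for the net reduces, exactly as in the proof of Proposition \ref{extendiblecase}, to strict convergence of $\psi(\eta_\lambda)$ in $\M(C)$, which holds because $\psi$ is extendible (being a $^*$-homomorphism into $C$). This shows the limit exists and gives a well-defined contractive linear map; the same computation, taking $\lambda = \mu$ and passing to the limit, shows $\|S_X(x\boxtimes c)\| = \|x \boxtimes c\|$, so $S_X$ is isometric. That $S_X$ is a bimodule map follows from the fact that the left $F(X)$-slot and the outer $C$-actions are untouched by insertion of the central approximate unit, and naturality of the construction in the Hilbert-module structure.

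To produce the inverse, I would define $T_X: F(X)\boxtimes{}_\phi B\boxtimes {}_\psi C \to F(X)\boxtimes{}_{\psi\circ\phi}C$ on elementary tensors by $T_X(x\boxtimes b\boxtimes c) = x \boxtimes \psi(b)c$ (the identity on $F(X)$ tensored with the map $T$ of Proposition \ref{extendiblecase}), and verify $T_X \circ S_X = \id$ and $S_X \circ T_X = \id$ by the same telescoping computation as in that proof: $S_X(T_X(x\boxtimes b\boxtimes c)) = \lim_\lambda x\boxtimes\eta_\lambda\boxtimes\psi(b)c = \lim_\lambda x\boxtimes\eta_\lambda b\boxtimes c = x\boxtimes b\boxtimes c$, using the $B$-balancing of $\boxtimes$ and that $\eta_\lambda b \to b$. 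The main obstacle, and the only place requiring care beyond Proposition \ref{extendiblecase}, is that the left factor $F(X)$ is a genuine Hilbert bimodule rather than a copy of a $\C$-algebra, so I cannot directly quote Cohen factorisation or the scalar computation; instead I must verify that tensoring the maps $S$ and $T$ of Proposition \ref{extendiblecase} on the left with $\id_{F(X)}$ preserves the balancing relations and the inner-product computation. Since the middle and right tensor legs are precisely those appearing in Proposition \ref{extendiblecase} and the functor $-\boxtimes{}_{\psi}C$ applied after $F(X)\boxtimes -$ respects the $B$-balanced structure, this reduces to checking that the isomorphism of Proposition \ref{extendiblecase} is natural enough to be amplified by $F(X)$, which is routine. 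By Remark \ref{rmk: IndepApproxUnit} the resulting map is independent of the chosen approximate unit, so $S_X$ is canonically defined, completing the proof.
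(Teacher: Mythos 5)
There is a genuine gap in your well-definedness argument. You justify the Cauchy property of the net $x\boxtimes\eta_\lambda\boxtimes c$ by reducing it, ``exactly as in the proof of Proposition \ref{extendiblecase}'', to strict convergence of $\psi(\eta_\lambda)$ in $\mathcal{M}(C)$, and you assert that this holds because any $^*$-homomorphism $\psi\colon B\to C$ whose image lies in $C$ is automatically extendible. That assertion is false: the inclusion $C_0((0,1])\hookrightarrow C([0,1])$ is a $^*$-homomorphism landing in the codomain, yet the image $t\mapsto t^{1/n}$ of the standard approximate unit does not converge strictly (which, the codomain being unital, would mean norm convergence). Indeed, if your claim were true then every cocycle morphism would be extendible, contradicting the whole point of this paper's setup, which is explicitly designed to accommodate non-extendible $\phi$ and $\psi$.

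The actual reason the net is Cauchy --- and the inner-product computation you sketch would reveal it if carried out --- is that
\[
\langle x\boxtimes(\eta_\lambda-\eta_\mu)\boxtimes c,\; x\boxtimes(\eta_\lambda-\eta_\mu)\boxtimes c\rangle_C
= c^*\,\psi\big((\eta_\lambda-\eta_\mu)\,\phi(\langle x,x\rangle_A)\,(\eta_\lambda-\eta_\mu)\big)\,c,
\]
and the crucial point is that $\phi(\langle x,x\rangle_A)$ lies in $B$ itself (because $(\phi,\mathbbm{v})$ is a cocycle \emph{morphism} rather than a representation), so $(\eta_\lambda-\eta_\mu)\phi(\langle x,x\rangle_A)^{1/2}\to 0$ in norm in $B$; no hypothesis on $\psi$ beyond being a $^*$-homomorphism is needed. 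This is precisely where the present lemma diverges from Proposition \ref{extendiblecase}: there the left leg is ${}_{\phi}B$ with inner product $\langle b,b'\rangle_B=b^*b'$, so nothing sits between the $\eta$'s and extendibility of $\psi$ is genuinely required, whereas here the factor $\phi(\langle x,x\rangle_A)\in B$ absorbs the approximate unit. The remainder of your outline (the inverse $T_X=\id_{F(X)}\boxtimes T$, the telescoping identity $S_X\circ T_X=\id$, and isometry plus surjectivity yielding bijectivity) matches the paper's argument and is sound once the Cauchy step is repaired; note only that the paper instead proves surjectivity of $T_X$ by writing $x=y\lhd a$ using non-degeneracy of $F(X)$ and Lemma \ref{lemma:ActionMultiplierAlg}, and then deduces the isometry from $T_X=\id_{F(X)}\boxtimes T$.
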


\begin{proof}
We will show that for any $x\in F(X)$ and any $c\in C$, the net $x\boxtimes\eta_\lambda\boxtimes c$ is Cauchy with respect to the norm induced by the right inner product. By definition, we have that $\langle x\boxtimes(\eta_\lambda-\eta_\mu)\boxtimes c, x\boxtimes(\eta_\lambda-\eta_\mu)\boxtimes c\rangle_C= \langle c, \langle x\boxtimes(\eta_\lambda-\eta_\mu), x\boxtimes(\eta_\lambda-\eta_\mu)\rangle_B\rhd c\rangle_C.$ Then, a direct computation shows that 
\begin{align*}
\langle x\boxtimes(\eta_\lambda-\eta_\mu), x\boxtimes(\eta_\lambda-\eta_\mu)\rangle_B &= \langle \eta_\lambda-\eta_\mu, \langle x,x\rangle_A\rhd (\eta_\lambda-\eta_\mu)\rangle_B \\ &= (\eta_\lambda-\eta_\mu)\phi(\langle x,x\rangle_A)(\eta_\lambda-\eta_\mu).
\end{align*} Therefore, $$\langle x\boxtimes(\eta_\lambda-\eta_\mu)\boxtimes c, x\boxtimes(\eta_\lambda-\eta_\mu)\boxtimes c\rangle_C=\langle c, \psi((\eta_\lambda-\eta_\mu)\phi(\langle x,x\rangle_A)(\eta_\lambda-\eta_\mu))c\rangle_C.$$ By the Cauchy-Schwarz inequality and since $(\eta_\lambda-\eta_\mu)\phi(\langle x,x\rangle_A)^{1/2}$ converges to $0$, it is readily seen that $\langle x\boxtimes(\eta_\lambda-\eta_\mu)\boxtimes c, x\boxtimes(\eta_\lambda-\eta_\mu)\boxtimes c\rangle_C$ converges to $0$, so $S_X$ is well-defined. Moreover, commutation with the left and right actions are immediate.

Let $T_X:F(X)\boxtimes{}_{\phi}B\boxtimes{}_{\psi}C\to F(X)\boxtimes {}_{\psi\circ\phi}C $ be the continuous linear map given by $T_X(x\boxtimes b\boxtimes c)=x\boxtimes \psi(b)c$. Then, 
\begin{align*}
S_X(T_X(x\boxtimes b\boxtimes c)) &= S_X(x\boxtimes \psi(b)c) \\ &= \lim\limits_{\lambda}x\boxtimes\eta_\lambda\boxtimes \psi(b)c \\ &= \lim\limits_{\lambda}x\boxtimes\eta_\lambda\boxtimes b\rhd c \\ &= \lim\limits_{\lambda}x\boxtimes\eta_\lambda\lhd b\boxtimes c \\ &= x\boxtimes b\boxtimes c. 
\end{align*}

Therefore, by linearity and continuity of both $S_X$ and $T_X$, it follows that $S_X$ is surjective and $T_X$ is injective. It now suffices to show that $T_X$ is surjective. This will imply that $S_X$ is invertible with $T_X$ being the inverse. Let $x\in F(X)$ and $c\in C$. Since $F(X)$ is a non-degenerate $A$-bimodule, the map $R_X^{-1}:F(X)\boxtimes A\to F(X)$ given by $R_X^{-1}(x\boxtimes a)=x\lhd a$ is a bimodule isomorphism (see the discussion above Lemma \ref{lemma:ActionMultiplierAlg}). Then, let $y\in F(X)$ and $a\in A$ such that $y\lhd a=x$.\ A straightforward calculation shows that $T_X(y\boxtimes \phi(a)\boxtimes c)=x\boxtimes c$, so $T_X$ is surjective. Hence, $S_X=T_X^{-1}$ is an isomorphism for any $X\in\cC$. 

Finally, for any $X\in\cC$, $T_X=\id_{F(X)}\boxtimes T$, where $T:{}_{\phi}B\boxtimes{}_{\psi}C\to {}_{\psi\circ\phi}C$ is given by $T(b\boxtimes c)=\psi(b)c$ for any $b\in B$ and $c\in C$. Since $T$ is an isometry, we conclude that $T_X$, and hence $S_X$ are isometric maps.
\end{proof}

Note that the proof of Lemma \ref{lemma: SWellDef} also shows that the map $S_X$ does not depend on the choice of approximate unit. The proof of the following Lemma is routine.

\begin{lemma}\label{lemma: NewCompExtCocRep}
Let $(\phi,\mathbbm{v}):(A,F,J)\to (B,G,I)$ and $(\psi,\mathbbm{w}):(B,G,I)\to (C,H,K)$ be cocycle morphisms. Then $(\psi\circ\phi, \mathbbm{w}*\mathbbm{v})$ is a cocycle morphism.
\end{lemma}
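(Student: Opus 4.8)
The plan is to verify directly that the pair $(\psi\circ\phi, \mathbbm{w}*\mathbbm{v})$ satisfies all the requirements of a cocycle morphism in Definition \ref{cocyclemorphism}. First, note that $\psi\circ\phi:A\to C$ is a $^*$-homomorphism since $\phi$ and $\psi$ are, so the underlying map is of the correct type. By Lemma \ref{lemma: CompCorrMor}, the composition $(\psi\circ\phi,\mathbbm{w}\circ\mathbbm{v})$ is already a correspondence morphism, and by Lemma \ref{lemma: SWellDef} each $S_X$ is an isometric bimodule isomorphism. Since $(\mathbbm{w}*\mathbbm{v})_X$ is defined as the composite $(T\boxtimes\id_{H(X)})\circ(\mathbbm{w}\circ\mathbbm{v})_X\circ S_X$ of bimodule maps, with $(\mathbbm{w}\circ\mathbbm{v})_X$ an isometry, $S_X$ an isometric isomorphism, and $T\boxtimes\id_{H(X)}$ an isometry (as $T$ is), it follows immediately that each $(\mathbbm{w}*\mathbbm{v})_X$ is an isometric $A$-$C$-bimodule map. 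Naturality in $X$ follows from the naturality of $(\mathbbm{w}\circ\mathbbm{v})_X$ together with the naturality of $S_X$ and $T$ in $X$.

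Next I would identify the tensor-unit normalisation. One must check that $(\mathbbm{w}*\mathbbm{v})_{1_{\cC}}(a\boxtimes x) = \psi(\phi(a))x$, i.e. that the prescribed formula agrees with the required value on $1_{\cC}$. Here the diagram defining $(\mathbbm{w}*\mathbbm{v})_{1_{\cC}}$ unwinds through $S_{1_{\cC}}$, the already-known value $(\mathbbm{w}\circ\mathbbm{v})_{1_{\cC}}(a\boxtimes x)=\psi(\phi(a))x$ from Lemma \ref{lemma: CompCorrMor}, and the map $T$; tracing an element $a\boxtimes c$ through \eqref{eq: CompCocMor} gives $\psi(\phi(a))c$ after using that $S_{1_{\cC}}$ inserts an approximate unit that is absorbed by the left action. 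This is a short bookkeeping computation.

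The main obstacle is verifying the pentagon diagram \eqref{cocyclemorphismdiagram} for $(\psi\circ\phi,\mathbbm{w}*\mathbbm{v})$. The strategy is to reduce it to the pentagon for the ordinary composition $(\mathbbm{w}\circ\mathbbm{v})$, which already holds because $(\psi\circ\phi,\mathbbm{w}\circ\mathbbm{v})$ is a correspondence morphism by Lemma \ref{lemma: CompCorrMor}. Concretely, I would insert the isomorphisms $S_X$, $S_Y$, and $S_{X\otimes Y}$ together with copies of $T$ at the appropriate corners of the pentagon, and then show that the resulting large diagram decomposes into the (known) pentagon for $\mathbbm{w}\circ\mathbbm{v}$ sandwiched between naturality squares for the family $\{S_X\}$ and for $T$. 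The key compatibility to establish is that the maps $S_X$ intertwine the $J$-maps and $I$-maps correctly with the insertion of approximate units of $B$ — that is, that tensoring with an approximate unit of $B$ commutes with the associativity/tensor-structure morphisms up to the limits involved. Because $(\mathbbm{w}*\mathbbm{v})_X$ is by construction $S_X$ conjugated against the genuine composite (followed by $T$), once these naturality squares commute, the pentagon for $\mathbbm{w}*\mathbbm{v}$ is forced by the pentagon for $\mathbbm{w}\circ\mathbbm{v}$.

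Finally, to make the pentagon argument rigorous I would work on the dense pre-Hilbert submodules spanned by elementary tensors, where each $S_X$ is given by the explicit limit $x\boxtimes c\mapsto\lim_\lambda x\boxtimes\eta_\lambda\boxtimes c$, check commutativity on such elements using the defining formulae for $J$, $I$, $K$, $T$, and the isometry maps, and then extend to the completions by continuity, exactly as in the proofs of Lemma \ref{lemma: ConstructionCheck} and Lemma \ref{lemma: SWellDef}. The routine isometry, bimodule, and naturality properties I would dispatch quickly; the genuine content lies in fitting the approximate-unit insertions into the tensor-structure coherence so that the pentagon collapses onto the one already available for $\mathbbm{w}\circ\mathbbm{v}$.
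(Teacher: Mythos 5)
Your proposal is correct and follows essentially the same route as the paper: the isometry, bimodule, naturality, and unit properties are dispatched exactly as in the paper's proof, and the pentagon is reduced to the already-established pentagon for $\mathbbm{w}\circ\mathbbm{v}$ (Lemma \ref{lemma: CompCorrMor}) via the compatibility $S_{X\otimes Y}\circ(J_{X,Y}\boxtimes\id_C)=(J_{X,Y}\boxtimes\id_B\boxtimes\id_C)\circ(\id_{F(X)}\boxtimes S_Y)$, which is precisely the key substitution in the paper's argument.
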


\begin{rmk}\label{rmk:NewComp}
We invite the reader to recall Remark~\ref{rmk: extendible}.
Then, with the same notation as in Definition \ref{def: CompCocMor}, if $(\phi,\mathbbm{v})$ and $(\psi,\mathbbm{w})$ are cocycle representations and $\psi$ is extendible, then it also follows that $(\psi\circ\phi, \mathbbm{w}*\mathbbm{v})$ is a well-defined cocycle representation.
\end{rmk}

\begin{rmk}
If the cocycle morphisms in Definition \ref{def: CompCocMor} are assumed to be non-degenerate, then the maps $S_X$ and $T$ are bimodule isomorphisms for any $X\in\cC$. Therefore, the composition in Definition \ref{def: CompCocMor} (denoted $*$) corresponds canonically to the composition of correspondence morphisms (denoted $\circ$).
\end{rmk}

The composition considered in Definition \ref{def: CompCocMor} defines a category. To show this we first reformulate the notion of cocycle morphism. Roughly speaking, all the information carried by the collection of isometries $\{\mathbbm{v}_X\}_{X\in \cC}$ can be encoded into a collection of linear maps $\{h^X:F(X)\to G(X)\}_{X\in\mathcal{C}}$ satisfying some conditions. This viewpoint will facilitate our constructions and proofs in later sections. Our approach is motivated by {\cite[Lemma 3.8]{limitfusioncategclassif}} which introduces this alternative viewpoint in the unital setting.
\par First, we recall a way of extending an action $(F,J)$ of a C$^*$-tensor category $\cC$ on $A$ to its matrix amplification $M_n(A)$.\ Consider the functor
$F^{\amp(n)}:\cC^{\rev}\rightarrow \Corr(M_n(A))$ that maps objects $X\in \cC$ to the correspondence with underlying bimodule $F(X)\otimes M_n(\mathbb{C})$ with the right inner product defined by $\langle (x_{ij}),(y_{ij})\rangle_{M_n(A)}=(\sum_l \langle x_{li},y_{lj}\rangle_A)$, right $M_n(A)$ action given by $(x_{ij})\lhd (a_{ij})=(\sum_l x_{il}\lhd a_{lj})$ and with the left action given by $(a_{ij})\rhd(x_{ij})=(\sum_l a_{il}\rhd x_{lj})$ for $a_{ij}\in A$ and $x_{ij}, y_{ij} \in F(X)$ for $1\leq i,j\leq n$.\footnote{This bimodule can be identified with the external tensor product of $F(X)$ and $M_n(\mathbb{C})$. Hence the inner product defines a Hilbert $M_n(A)$-bimodule (see \cite{Hilbertmodules}).}\ For a morphism $T\in \Hom(X,Y)$ we let $F^{\amp(n)}(T)=T\otimes \id_{M_n(\mathbb{C})}$.\ Moreover, letting $J_{X,Y}^{\amp(n)}((y_{ij})\boxtimes (x_{ij}))=(\sum_l J_{X,Y}(y_{il}\boxtimes x_{lj}))$ for $x_{ij}\in F(X)$ and $y_{ij}\in F(Y)$ for $1\leq i,j\leq n$, it is a straightforward calculation that $(F^{\amp(n)},J^{\amp(n)})$ is an action of $\cC$ on $M_n(A)$.
\begin{lemma}\label{linearmapspicture}
Let $\cC$ be a $\C$-tensor category acting on $\C$-algebras $A$ and $B$ via $(A,F,J)$ and $(B,G,I)$ respectively, and let $\phi: A\rightarrow B$ be a $^*$-homomorphism.\ Then there is a bijection between the families $\{\mathbbm{v}_X\}_{X\in\cC}$ corresponding to a cocycle morphism $(\phi,\mathbbm{v}):(A,F,J)\to (B,G,I)$ and families of linear maps: 
$$\{h^{X}: F(X)\rightarrow G(X)\}_{X\in\cC}$$ such that for any $X,Y\in\cC$
\begin{enumerate}[label=\textit{(\roman*)}]
\item $h^{X}(a\rhd x \lhd a')=\phi(a)\rhd h^{X}(x)\lhd \phi(a')$ for any $a,a'\in A$;\label{item:linearmap1}
\item
for any morphism $f\in \Hom(X,Y)$, $G(f)\circ h^{X}=h^{Y}\circ F(f)$;\label{item:linearmap2}
\item
$\phi(\langle x , y\rangle_A)=\langle h^{X}(x) , h^{X}(y)\rangle_B$ for any $x,y\in F(X)$; \label{item:linearmap3}
\item the diagram:\label{item:linearmap4}
\[
\begin{tikzcd}
F(Y)\boxtimes F(X)
\arrow[swap]{d}{h^{Y}\boxtimes h^{X}}
\arrow{r}{J_{X,Y}}
& F(X\otimes Y)\arrow{d}{h^{X\otimes Y}}
 \\
G(Y)\boxtimes G(X)
\arrow{r}{I_{X,Y}}
& G(X\otimes Y)
\end{tikzcd}
\]
commutes;
\item $h^{1_{\cC}}:A\to B$ is given by $h^{1_{\cC}}(a)=\phi(a)$ for any $a\in A$.\label{item:linearmap5}
\end{enumerate}  
\end{lemma}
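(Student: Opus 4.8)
The plan is to route the correspondence through a more tractable intermediate description of the isometries $\mathbbm{v}_X$ and then set up an induction--restriction bijection with the linear maps $h^X$. First I would simplify the codomain. Since each $G(X)$ is a non-degenerate $B$-bimodule, the internal-tensor-product map $\Theta_X\colon {}_{\phi}B\boxtimes G(X)\to {}_{\phi}G(X)$, $b\boxtimes\xi\mapsto b\rhd\xi$, is an isometric bimodule isomorphism (this is the unit law for the internal tensor product, argued exactly as in Proposition \ref{extendiblecase} and Lemma \ref{lemma:ActionMultiplierAlg}), where ${}_{\phi}G(X)$ denotes $G(X)$ with left $A$-action $a\rhd\xi=\phi(a)\rhd\xi$. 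Moreover $\{\Theta_X\}$ is natural in $X$. Composing with $\Theta_X$ therefore yields a bijection $\mathbbm{v}_X\mapsto\bar{\mathbbm{v}}_X:=\Theta_X\circ\mathbbm{v}_X$ between families of isometric $A$-$B$-bimodule maps $F(X)\boxtimes{}_{\phi}B\to{}_{\phi}B\boxtimes G(X)$ and families of isometric $A$-$B$-bimodule maps $F(X)\boxtimes{}_{\phi}B\to{}_{\phi}G(X)$, under which the naturality and unit conditions on $\{\mathbbm{v}_X\}$ correspond to the analogous conditions on $\{\bar{\mathbbm{v}}_X\}$.

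Next I would establish, for each fixed $X$, the key bijection between such maps $\bar{\mathbbm{v}}_X$ and linear maps $h^X\colon F(X)\to G(X)$ satisfying \ref{item:linearmap1} and \ref{item:linearmap3}. Given $\bar{\mathbbm{v}}_X$, set $h^X(x)=\lim_{\lambda}\bar{\mathbbm{v}}_X(x\boxtimes\eta_{\lambda})$ for an approximate unit $(\eta_{\lambda})$ of $B$; the net $x\boxtimes\eta_{\lambda}$ is Cauchy in $F(X)\boxtimes{}_{\phi}B$ via the identity $\langle x\boxtimes(\eta_{\lambda}-\eta_{\mu}),x\boxtimes(\eta_{\lambda}-\eta_{\mu})\rangle_B=(\eta_{\lambda}-\eta_{\mu})\phi(\langle x,x\rangle_A)(\eta_{\lambda}-\eta_{\mu})$ already used in Proposition \ref{extendiblecase} and Lemma \ref{lemma: SWellDef}, so $h^X$ is well defined; it inherits \ref{item:linearmap1} from the bimodule property and \ref{item:linearmap3} from the isometry property, and it satisfies $\bar{\mathbbm{v}}_X(x\boxtimes b)=h^X(x)\lhd b$. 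Conversely, given $h^X$ obeying \ref{item:linearmap1} and \ref{item:linearmap3}, the assignment $x\boxtimes b\mapsto h^X(x)\lhd b$ is well defined over $A$ by \ref{item:linearmap1} and isometric by \ref{item:linearmap3}, hence extends to an isometric bimodule map $\bar{\mathbbm{v}}_X$; the two assignments are mutually inverse since $h^X(x)\lhd\eta_{\lambda}\to h^X(x)$ by non-degeneracy of $G(X)$. The easy conditions then match pointwise: naturality of $\{\bar{\mathbbm{v}}_X\}$ on $x\boxtimes b$ reads $h^Y(F(f)(x))\lhd b=G(f)(h^X(x))\lhd b$ for all $b$, equivalent to \ref{item:linearmap2}; and as $F(1_{\cC})$ and $G(1_{\cC})$ are the standard bimodules $A$ and $B$, the normalisation $\mathbbm{v}_{1_{\cC}}(a\boxtimes x)=\phi(a)x$ becomes $\bar{\mathbbm{v}}_{1_{\cC}}(a\boxtimes x)=\phi(a)x=h^{1_{\cC}}(a)x$, i.e.\ \ref{item:linearmap5}.

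The main obstacle is the equivalence of the pentagon \eqref{cocyclemorphismdiagram} with the square \ref{item:linearmap4}. Translating \eqref{cocyclemorphismdiagram} into bimodule language, both composites are isometric right $B$-module maps $F(X)\boxtimes F(Y)\boxtimes{}_{\phi}B\to{}_{\phi}B\boxtimes G(X\otimes Y)$, so they agree iff they agree after composing with the isomorphism $\Theta_{X\otimes Y}$ and, by right $B$-linearity, evaluating on $\lim_{\lambda}x\boxtimes y\boxtimes\eta_{\lambda}$. Along the right-hand path, $J_{X,Y}\boxtimes\id_B$ followed by $\mathbbm{v}_{X\otimes Y}$ and $\Theta_{X\otimes Y}$ collapses, by the very definition of $h$, to $h^{X\otimes Y}(J_{X,Y}(x\boxtimes y))$. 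Along the left-hand path one substitutes $\mathbbm{v}_Z(z\boxtimes b)=\Theta_Z^{-1}(h^Z(z)\lhd b)=\lim_{\mu}\eta_{\mu}\boxtimes(h^Z(z)\lhd b)$ for $Z\in\{X,Y\}$; after applying $\id_B\boxtimes I_{X,Y}$ and $\Theta_{X\otimes Y}$, the leading ${}_{\phi}B$-factor is absorbed by non-degeneracy and the nested approximate-unit limits converge, using continuity of $I_{X,Y}$ and of the internal tensor product, to $I_{X,Y}(h^X(x)\boxtimes h^Y(y))$. Hence the pentagon holds iff $h^{X\otimes Y}\circ J_{X,Y}=I_{X,Y}\circ(h^X\boxtimes h^Y)$, which is exactly \ref{item:linearmap4}. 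I expect essentially all of the work to lie here, in the careful bookkeeping of the rebracketings of the triple internal tensor products and in controlling the iterated limits over the approximate unit.
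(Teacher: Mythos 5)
Your proposal is correct and follows the same overall strategy as the paper's proof: both directions of the bijection are implemented by the same approximate-unit formulas (the paper recovers $h^X(x)$ as the limit of $\mathbbm{v}_X(x\boxtimes\eta_\lambda)$ pushed into $G(X)$ by the map it calls $f$, and reconstructs $\mathbbm{v}_X(x\boxtimes b)$ from $h^X(x)\lhd b$), and the pentagon \eqref{cocyclemorphismdiagram} is matched with the square in \ref{item:linearmap4} by the same elementary-tensor computation with iterated approximate-unit limits. The one genuine difference is your decision to conjugate at the outset by the unit isomorphism $\Theta_X:{}_{\phi}B\boxtimes G(X)\to G(X)$, which the paper only introduces (as $f$) in the converse direction. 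This pays off at the isometry step: for $\bar{\mathbbm{v}}_X(x\boxtimes b)=h^X(x)\lhd b$, condition \ref{item:linearmap3} in its stated polarised form gives
\[
\Big\langle \sum_i h^X(x_i)\lhd b_i,\ \sum_j h^X(x_j)\lhd b_j\Big\rangle_B=\sum_{i,j}b_i^*\,\phi(\langle x_i,x_j\rangle_A)\,b_j=\Big\langle \sum_i x_i\boxtimes b_i,\ \sum_j x_j\boxtimes b_j\Big\rangle_B,
\]
so isometry on general elements of the tensor product (and well-definedness on the quotient) is immediate. The paper instead verifies isometry of $\mathbbm{v}_X$ only on elementary tensors and then passes to finite sums via the matrix amplifications $(F^{\amp(n)},J^{\amp(n)})$; your route makes that amplification unnecessary. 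Everything else --- the Cauchy estimates for the approximate-unit nets (which, as in the paper, use that $\phi$ lands in $B$ rather than $\mathcal{M}(B)$, so the argument is specific to cocycle morphisms), the bimodule and naturality checks, the unit condition, and the verification that the two assignments are mutually inverse --- matches the paper's argument step for step.
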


\begin{proof}
Suppose we are given a collection of linear maps $\{h^X\}$ satisfying the conditions listed above. Fix $\zeta_\lambda$ an approximate unit for $A$. For all $X\in \cC$ let $\mathbbm{v}_X:F(X)\boxtimes {}_{\phi}B\to {}_{\phi}B\boxtimes G(X)$ be given by
\begin{equation}\label{eq: fromHtoV}
\mathbbm{v}_X(x\boxtimes b)=\lim\limits_{\lambda}\phi(\zeta_\lambda)\boxtimes h^X(x)\lhd b, \quad x\in F(X),\ b\in B.
\end{equation} 
First, we need to show that $\mathbbm{v}_X$ is well-defined. It suffices to show that the net $\phi(\zeta_\lambda)\boxtimes h^X(x)\lhd b$ is Cauchy. A straightforward computation using the definition of the inner product and \ref{item:linearmap1} gives that 
\begin{align}
\|\phi(\zeta_\lambda-\zeta_\mu)\boxtimes h^X(x)\lhd b\|^2 &= \|\phi(\zeta_\lambda-\zeta_\mu)\rhd h^X(x)\lhd b\|^2\nonumber\\ &= \|h^X(\zeta_\lambda\rhd x-\zeta_\mu\rhd x)\lhd b\|^2.\label{eqn:normcond}
\end{align} Since the bimodule $F(X)$ is non-degenerate, denoting the left action by $\sigma:A\to\mathcal{L}(F(X))$, we have that $\sigma(\zeta_\lambda)$ converges strictly to $1_{\mathcal{L}(F(X))}$. Then, for any $x\in F(X)$, $\zeta_\lambda\rhd x=\sigma(\zeta_\lambda)x$ converges to $x$. Moreover, condition \ref{item:linearmap3} implies that $h^X$ is continuous, so the right hand side of  \eqref{eqn:normcond} converges to zero and hence $\|\phi(\zeta_\lambda-\zeta_\mu)\boxtimes h^X(x)\lhd b\|^2$ converges to $0$. Therefore the formula in \eqref{eq: fromHtoV} gives a well-defined map $\mathbbm{v}_X$ for all $X\in \cC$. 

The maps $\mathbbm{v}_X$ are linear by linearity of $h^X$ and naturality follows by using naturality of the family $\{h^X\}$ given by condition \ref{item:linearmap2}. It is straightforward to see that $\mathbbm{v}_X$ commutes with the right $B$-action, and for any $a\in A$, $b\in B$, $X\in \cC$ and $x\in F(X)$
\begin{align*}
\mathbbm{v}_X(a\rhd (x\boxtimes b))&=\lim\limits_{\lambda}\phi(\zeta_\lambda)\boxtimes h^X(a\rhd x)\lhd b \\ &= \lim\limits_{\lambda}\phi(\zeta_\lambda)\boxtimes \phi(a)\rhd h^X(x)\lhd b \\ &= \lim\limits_{\lambda}\phi(\zeta_\lambda)\phi(a)\boxtimes h^X(x)\lhd b \\ &= \phi(a)\boxtimes h^X(x)\lhd b \\ &= a\rhd \mathbbm{v}_X(x\boxtimes b).
\end{align*} Therefore, $\mathbbm{v}_X$ is an $A$-$B$-bimodule map. Moreover, using \ref{item:linearmap5}, \begin{align*}
\mathbbm{v}_{1_{\cC}}(a\boxtimes b) &=\lim\limits_{\lambda}\phi(\zeta_\lambda)\boxtimes \phi(a) b \\ &= \lim\limits_{\lambda}\phi(\zeta_\lambda)\phi(a)\boxtimes b\\ &= \phi(a)\boxtimes b,\ \forall a\in A,\ b\in B.
\end{align*}

To prove that $(\phi,\mathbbm{v})$ defines a cocycle morphism, it remains to show that each map $\mathbbm{v}_X$ is an isometry and the family $\{\mathbbm{v}_X\}$ is such that diagram (\ref{cocyclemorphismdiagram}) commutes. Let us first show that each map $\mathbbm{v}_X$ is an isometry. For any $x\in F(X)$ and any $b\in B$, using \ref{item:linearmap1}, we have
\begin{align*}
|\mathbbm{v}_X(x\boxtimes b)|^2 &= \lim\limits_{\lambda}\langle\phi(\zeta_\lambda)\rhd h^X(x)\lhd b, \phi(\zeta_\lambda)\rhd h^X(x)\lhd b\rangle \\ &= \lim\limits_{\lambda}\langle h^X(\zeta_\lambda\rhd x)\lhd b, h^X(\zeta_\lambda\rhd x)\lhd b\rangle \\ &= \lim\limits_{\lambda} b^* \langle h^X(\zeta_\lambda\rhd x), h^X(\zeta_\lambda\rhd x)\rangle b.
\end{align*}
On the other hand, \ref{item:linearmap3} yields that
\begin{align*}
|x\boxtimes b|^2 &= \langle b, \langle x,x\rangle_A\rhd b\rangle \\ &= \langle \phi(|x|)b, \phi(|x|)b\rangle \\ &= b^*\phi(\langle x,x\rangle_A)b\\ &= b^*\langle h^X(x),h^X(x)\rangle_Bb.
\end{align*} Similarly, as $h^X$ is continuous by \ref{item:linearmap3} and $F(X)$ is non-degenerate, it follows that $h^X(\zeta_\lambda\rhd x)$ converges to $h^X(x)$, which shows that $\mathbbm{v}_X$ is an isometry when restricted to elementary tensors. To show that $\mathbbm{v}_X$ acts as an isometry on sums of the form $\sum_{i=1}^n x_i\boxtimes b_i$ for $x_i\in F(X)$, $b_i\in B$ and $X\in \cC$ we consider the amplified actions $(F^{\amp(n)},J^{\amp(n)})$ and $(G^{\amp(n)},I^{\amp(n)})$ on $M_n(A)$ and $M_n(B)$ respectively.\ It follows from a direct computation that the family of linear maps $h^{X,\amp(n)}:F^{\amp(n)}(X)\rightarrow G^{\amp(n)}(X)$ defined by $(x_{ij})\mapsto (h^X(x_{ij}))$ for $X\in \cC$ and $x_{ij}\in F(X)$ satisfies conditions \ref{item:linearmap1}-\ref{item:linearmap5} with the amplified homomorphism $\phi:M_n(A)\rightarrow M_n(B)$. Therefore $\mathbbm{v}_X^{\amp(n)}$ defined as in (\ref{eq: fromHtoV}) but instead with the pair $(\phi,h^{X,\amp(n)})$ is an isometry when restricted to elementary tensors.\ Choose $\mathbf{X}$ in $F^{\amp(n)}(X)$ with first row given by the vector $(x_1,x_2,\ldots ,x_n)$ and zero elsewhere and $\mathbf{B}$ in $M_n(B)$ have first column $(b_1,b_2,\ldots, b_n)$ and zero elsewhere. Now, by definition
\begin{align*}
    \|\mathbf{X}\boxtimes \mathbf{B}\|^2&=\|\langle \mathbf{B},\langle \mathbf{X},\mathbf{X}\rangle_{M_n(A)}\rhd \mathbf{B}\rangle\|\\
    &=\|\sum_{i=1}^n x_i\boxtimes b_i\|^2
\end{align*}
and similarly through a direct computation
$$\|\mathbbm{v}_X^{\amp(n)}(\mathbf{X}\boxtimes \mathbf{B})\|^2=\|\mathbbm{v}_X(\sum_{i=1}^n x_i\boxtimes b_i)\|^2.$$
As $\mathbbm{v}_X^{\amp(n)}$ is an isometry when restricted to elementary tensors, it follows that $\mathbbm{v}_X$ is an isometry.
\par It remains to check that the diagram\\
\begin{equation}\label{diag:prooflinmaps}
\begin{adjustbox}{max width=\textwidth}
\begin{tikzcd}
& F(Y)\boxtimes F(X)\boxtimes{}_{\phi}B 
\ar{dr}{J_{X,Y}\boxtimes\id_{B}}
\ar[swap]{dl}{\id_{F(Y)}\boxtimes\mathbbm{v}_X} &    
\\   
F(Y)\boxtimes {}_{\phi}B\boxtimes G(X) \ar[swap]{dd}{\mathbbm{v}_Y\boxtimes\id_{G(X)}} 
& &F(X\otimes Y)\boxtimes {}_{\phi}B\ar{dd}{\mathbbm{v}_{X\otimes Y}}
\\
& &
\\
{}_{\phi}B\boxtimes G(Y)\boxtimes G(X)
\ar{rr}{\id_{B}\boxtimes I_{X,Y}} 
&    
& {}_{\phi}B\boxtimes G(X\otimes Y).
\end{tikzcd}
\end{adjustbox}
\end{equation}
commutes for all $X,Y\in \cC$. 
\par Starting with an elementary tensor $y\boxtimes x\boxtimes b$ with $X,Y\in \cC,y\in F(Y), x\in F(X)$ and $b\in B$ and following the two rightmost maps of the diagram, we get that $$y\boxtimes x\boxtimes b\mapsto J_{X,Y}(y\boxtimes x)\boxtimes b\mapsto \lim\limits_{\lambda}\phi(\zeta_\lambda)\boxtimes h^{X\otimes Y}(J_{X,Y}(y\boxtimes x))\lhd b. $$ Moreover, using that the family of linear maps satisfies condition \ref{item:linearmap4}, this composition coincides with the mapping $$y\boxtimes x\boxtimes b\mapsto\lim\limits_{\lambda}\phi(\zeta_\lambda)\boxtimes I_{X,Y}(h^Y(y)\boxtimes h^X(x))\lhd b. $$
Again, starting with $y\boxtimes x\boxtimes b$ but now following the three leftmost arrows in diagram \eqref{diag:prooflinmaps} we get 
\begin{align*}
y\boxtimes x\boxtimes b &\mapsto \lim\limits_{\lambda}y\boxtimes \phi(\zeta_\lambda)\boxtimes h^X(x)\lhd b \\ &\mapsto \lim\limits_{\lambda}\lim\limits_{\mu} \phi(\zeta_\mu)\boxtimes h^Y(y)\lhd \phi(\zeta_\lambda)\boxtimes h^X(x)\lhd b \\ &= \lim\limits_{\mu} \phi(\zeta_\mu)\boxtimes h^Y(y)\boxtimes h^X(x)\lhd b \\ &\mapsto \lim\limits_{\mu} \phi(\zeta_\mu)\boxtimes I_{X,Y}(h^Y(y)\boxtimes h^X(x)\lhd b)\\ &= \lim\limits_{\mu} \phi(\zeta_\mu)\boxtimes I_{X,Y}(h^Y(y)\boxtimes h^X(x))\lhd b,
\end{align*} where the first equality holds since $h^Y(y)\lhd\phi(\zeta_\lambda)=h^Y(y\lhd\zeta_\lambda)$ converges to $h^Y(y)$. So \eqref{diag:prooflinmaps} commutes and $(\phi,\mathbbm{v})$ is a cocycle morphism.

Now, consider the map $\Psi:\{h^X\}\to\{\mathbbm{v}_X\}$ given by the formula in \eqref{eq: fromHtoV}. We claim that $\Psi$ is independent of the choice of approximate unit. Indeed let $\zeta_\lambda$ and $\xi_\lambda$ be two approximate units for $A$. Similarly as in (\ref{eqn:normcond}) we have that, $\|\phi(\zeta_\lambda-\xi_\lambda)\boxtimes h^X(x)\lhd b\|=\|h^X(\zeta_\lambda\rhd x-\xi_\lambda\rhd x)\lhd b\|$, which converges to $0$. Hence, the map $\mathbbm{v}_X$ is independent of the choice of approximate unit, and so $\Psi$ is well-defined.

Conversely, suppose we have a cocycle morphism $(\phi,\mathbbm{v}):(A,F,J)\to (B,G,I)$ and for each $X\in\cC$ let $h^X:F(X)\to G(X)$ be given by 
\begin{equation}\label{hcomp}
\begin{tikzcd}
F(X)\ar{r}{\iota}& F(X)\boxtimes{}_{\phi}B\ar{r}{\mathbbm{v}_X}& {}_{\phi}B\boxtimes G(X)\ar{r}{f} & G(X),  
\end{tikzcd}
\end{equation} where $\iota(x)=\lim_{\lambda}x\boxtimes\eta_\lambda$ for some approximate unit $\eta_\lambda$ of $B$ and all $x\in F(X)$, and $f$ is the map given by $f(b\boxtimes y)=b\rhd y$ for all $b\in B$ and $y\in G(X)$. Note that $f$ is an $A$-$B$-bimodule isomorphism if we see $G(X)$ as a left $A$-module through $\phi$ (i.e. $\phi(a)\rhd_B f(y)=a\rhd_A f(y)$ for all $y\in {}_\phi B\boxtimes G(X)$ and $a\in A$).

To check that $\iota$ is well-defined, we show that the net $x\boxtimes\eta_\lambda$ is Cauchy for all $X\in \cC$ and $x\in F(X)$. Precisely, one has that $\langle x\boxtimes(\eta_\lambda-\eta_\mu),x\boxtimes(\eta_\lambda-\eta_\mu)\rangle_B=\langle \eta_\lambda-\eta_\mu, \langle x,x\rangle_A\rhd(\eta_\lambda-\eta_\mu)\rangle_B= |\phi(\langle x,x\rangle_A)^{1/2}(\eta_\lambda-\eta_\mu)|^2$. This converges to $0$ since the image of $\phi$ is contained in $B$ and $\eta_\lambda$ is an approximate unit for $B$.

We now check that the family $\{h^X\}$ defined above satisfies the required compatibility conditions. Since each of the maps in \eqref{hcomp} are linear, we get that $h^X$ is linear. To see \ref{item:linearmap1}, note that $f$, $\mathbbm{v}_X$ and $\iota$ are left module maps so $\phi(a)\rhd h^X(x)=h^X(a\rhd x)$ for all $a\in A$ and $x\in F(X)$. Moreover, as $f$ and $\mathbbm{v}_X$ are right $B$-module maps and $\iota$ is a right $A$-module map
\begin{align*}
h^X(x)\lhd \phi(a)&=\lim\limits_{\lambda} f(\mathbbm{v}_X(x\boxtimes \eta_\lambda\phi(a)))=h^X(x\lhd a).
\end{align*} Hence, $h^X$ satisfies \ref{item:linearmap1}. It is straightforward to see that $h^X$ satisfies \ref{item:linearmap2} by naturality of $\mathbbm{v}$.

Note that $\mathbbm{v}_X$ and $f$ are isometries. So one has that for any $x\in F(X)$, $\langle h^{X}(x) , h^{X}(x)\rangle_B=\langle\iota(x),\iota(x)\rangle_B=\lim_{\lambda}\langle \eta_\lambda,\phi(\langle x,x\rangle_A)\eta_\lambda\rangle_B= \phi(\langle x,x\rangle_A)$ and \ref{item:linearmap3} follows from the polarisation identity. Condition \ref{item:linearmap4} follows from the fact that the maps $\mathbbm{v}_X$ satisfy the diagram in \eqref{cocyclemorphismdiagram}. Finally,
$$h^{1_{\cC}}(a) = \lim\limits_{\lambda} f(\mathbbm{v}_{1_{\cC}}(a\boxtimes \eta_\lambda)) = \lim\limits_{\lambda} f(\phi(a)\boxtimes\eta_\lambda) =\phi(a).$$

Now, let $\Phi:\{\mathbbm{v}_X\}\to\{h^X\}$ be the map induced by the formula in \eqref{hcomp}. Note that $\iota$ and hence $\Phi$ is independent of the choice of approximate unit. Indeed, let $\eta_\lambda$ and $\xi_\lambda$ be two approximate units for $B$. We show that the net $x\boxtimes(\eta_\lambda-\xi_\lambda)$ converges to $0$ for any $x\in F(X)$. Note that 
\begin{align*}
    |x\boxtimes(\eta_\lambda-\xi_\lambda)|^2&=\langle (\eta_\lambda-\xi_\lambda), \langle x, x\rangle_A\rhd(\eta_\lambda-\xi_\lambda)\rangle\\ &= |\phi(\langle x,x\rangle_A)^{1/2}(\eta_\lambda-\xi_\lambda)|^2,
\end{align*} which converges to $0$.
\par We claim that $\Phi$ and $\Psi$ are inverses to each other. First we show that $\Phi\circ\Psi$ is the identity map. For any $X\in\cC$ and $x\in F(X)$, it follows that

\begin{adjustbox}{max width=\textwidth}
$\Phi(\Psi(h^X))(x)=f\Big(\Psi(h^X)\Big(\lim\limits_{\lambda}x\boxtimes\eta_\lambda\Big)\Big)=f\Big(\lim\limits_{\mu}\lim\limits_{\lambda}\phi(\zeta_\mu)\boxtimes h^X(x)\lhd\eta_\lambda\Big).$
\end{adjustbox} 

Since $\eta_\lambda$ is an approximate unit for $B$ and $G(X)$ is non-degenerate, it follows that $$\Phi(\Psi(h^X))(x)=f\Big(\lim\limits_{\mu}\phi(\zeta_\mu)\boxtimes h^X(x)\Big)=\lim\limits_{\mu}\phi(\zeta_\mu)\rhd h^X(x).$$ As $h^X$ satisfies condition \ref{item:linearmap1}, $\Phi(\Psi(h^X))(x)=\lim\limits_{\mu}h^X(\zeta_\mu\rhd x)$. Thus, it suffices to show that $\|h^X(\zeta_\mu\rhd x-x)\|\to 0$. This follows by continuity of $h^X$ and that $F(X)$ is non-degenerate. 

To prove that $\Psi\circ\Phi$ is the identity map, note that
\begin{align*}
\Psi(\Phi(\mathbbm{v}_X))(x\boxtimes b)&=\lim\limits_{\mu}\phi(\zeta_\mu)\boxtimes\Phi(\mathbbm{v}_X)(x)\lhd b\\ &=\lim\limits_{\mu}\phi(\zeta_\mu)\boxtimes f\Big(\mathbbm{v}_X\Big(\lim\limits_{\lambda}x\boxtimes \eta_\lambda\Big)\Big)\lhd b\\&= \lim\limits_{\mu}\phi(\zeta_\mu)\boxtimes f\Big(\mathbbm{v}_X\Big(\lim\limits_{\lambda}x\boxtimes \eta_\lambda b\Big)\Big)\\&=\lim\limits_{\mu}\phi(\zeta_\mu)\boxtimes f(\mathbbm{v}_X(x\boxtimes b)).
\end{align*}
Applying $f$ to both sides
\begin{align*}
f(\Psi(\Phi(\mathbbm{v}_X))(x\boxtimes b))&=\lim\limits_{\mu}\phi(\zeta_\mu)\rhd_B f(\mathbbm{v}_X(x\boxtimes b))\\&= \lim\limits_{\mu}\zeta_\mu\rhd_A f(\mathbbm{v}_X(x\boxtimes b))\\
&=\lim\limits_{\mu} f(\mathbbm{v}_X((\zeta_\mu\rhd x)\boxtimes b))\\
&=f(\mathbbm{v}_X(x\boxtimes b))
\end{align*}
as $\zeta_\mu\rhd x$ converges to $x$. Hence, we reach the conclusion by composing with $f^{-1}: G(X)\to {}_{\phi}B\boxtimes G(X)$ given by $f^{-1}(x)=\lim\limits_{\lambda} \eta_\lambda\boxtimes x$. 
\end{proof}

Note that this alternative picture only holds for cocycle morphisms. In the generality of cocycle representations, the maps $h^X$ may not be well-defined. That is because if $(\phi,\mathbbm{v})$ is a cocycle representation, $\phi$ can land in $M(B)\setminus B$. As $\eta_\lambda$ is an approximate unit for $B$, $\|\phi(\langle x,x\rangle_A)^{1/2}(\eta_\lambda-\eta_\mu)\|^2$ need not converge to $0$.
\begin{rmk}
    Note that condition \ref{item:linearmap1} follows from \ref{item:linearmap4} as $J_{1,X}$ and $J_{X,1}$ correspond to the left and right actions of $A$ on $F(X)$.  Similarly from \ref{item:linearmap4} it is clear that $h^{1_{\cC}}$ is a $^*$-homomorphism so we may simply define a cocycle morphism satisfying conditions \ref{item:linearmap2}-\ref{item:linearmap4} by setting $h^{1_{\cC}}=\phi$.
\end{rmk}
\begin{rmk}\label{rmk: IrredDetLinearMaps}
As in Remark \ref{rmk: IrredDetermineCateg}, if the acting category $\mathcal{C}$ is semisimple, then the family of linear maps $\{h^{X}\}_{X\in\cC}$ is uniquely determined by the family $\{h^{X}\}_{X\in\Irr(\cC)}$. Precisely, if $X\cong\bigoplus_{i}X_i\in\cC$ is the decomposition as a direct sum of elements in $\Irr(\cC)$, then $F(X)$ is naturally isomorphic to $\bigoplus_{i}F(X_i)$ and $G(X)$ is naturally isomorphic to $\bigoplus_{i}G(X_i)$.\ Then, the map $h^X$ is $\bigoplus_{i}h^{X_i}$. In particular, it suffices to check that a family of linear maps $\{h^{X}: F(X)\rightarrow G(X)\}_{X\in\Irr(\cC)}$ satisfy the conditions of Lemma \ref{linearmapspicture} to yield a cocycle morphism, with understanding condition \ref{item:linearmap4} as $I_{X,Y}\circ (h^Y\boxtimes h^X)=\bigoplus_{i}h^{X_i}\circ J_{X,Y}$ for all $X,Y\in \Irr(\cC)$ and $X\otimes Y\cong \bigoplus_i X_i$ is the irreducible decomposition.
\end{rmk}

Lemma \ref{linearmapspicture} shows that any cocycle morphism can be equivalently represented by a pair $(\phi,h)$, where for convenience, we denote a cocycle morphism by $(\phi, h)$, where $h$ denotes the collection of linear maps $\{h^X\}_{X\in\cC}$.
Moreover, Lemma \ref{compositionlinearmaps} below shows that in the latter picture, the composition of cocycle morphisms translates to the composition of the underlying $^*$-homomorphisms and linear maps. From now on, we will freely identify these two pictures.

\begin{rmk}\label{rmk: ExtLinMaps}
When $\phi:A\rightarrow B$ is extendible and $(\phi,\{h^X\}_{X\in\cC})$ is a cocycle morphism, condition \ref{item:linearmap1} also follows for $a,a'\in \mathcal{M}(A)$. Precisely, if $a,a'\in \mathcal{M}(A)$ then $h^X(a\rhd x\lhd a')=\phi_p(a)\rhd h^X(x)\lhd \phi_p(a')$ for any $X\in\cC$ and $x\in F(X)$ where $p\in \M(B)$ is the projection associated to $\phi$. Similarly, for any $u,u'\in \U(\M(A))$ it follows that $h^X(u\rhd x\lhd u')=\phi^{\dagger}(u)\rhd h^X(x)\lhd \phi^{\dagger}(u')$. Indeed, for an approximate unit $e_{\lambda}$ of $A$, $\phi(e_{\lambda})$ converges to $p$ and $$p\rhd h^X(x)=\lim\limits_{\lambda}\phi(e_{\lambda})\rhd h^X(x)=\lim\limits_{\lambda}h^X(e_{\lambda}\rhd x)=h^X(x).$$
Likewise, we have $h^X(x)\lhd p=h^X(x)$.
Therefore, \[\phi^{\dagger}(u)\rhd h^X(x)\lhd \phi^{\dagger}(u')=\phi_p(u)\rhd h^X(x)\lhd \phi_p(u')=h^X(u\rhd x\lhd u').\]
\end{rmk}
\begin{example}\label{exmp: linearmapsHilb(G)}
Suppose $(A,\alpha)$ and $(B,\beta)$ are actions of a countable discrete group $\Gamma$ on $\C$-algebras $A$ and $B$. Consider them as actions of $\Hilb(\Gamma)$ as in Example \ref{exn:groupaction} and let $(\phi,\{h^g\}_{g\in \Gamma}):(A,\alpha)\to (B,\beta)$ be an extendible cocycle morphism. Recall from Example \ref{example: intertHilb(G)} that $\mathbbm{v}_{\mathbb{C}g}(a\boxtimes b)=\lim\limits_{\lambda}\eta_\lambda\boxtimes\mathbbm{u}_g\phi(a)b,$ where $\eta_\lambda$ is an approximate unit for $B$ and $(\mathbbm{u}_g)_{g\in \Gamma}\subseteq\mathcal{M}(B)$. Then for any $g\in \Gamma$ and $a\in A$, $$h^g(a)=f\left(\mathbbm{v}_{\mathbb{C}g}\left(\lim\limits_{\lambda}a\boxtimes\eta_\lambda\right)\right)=f\left(\lim\limits_{\lambda}\eta_\lambda\boxtimes\mathbbm{u}_g\phi(a)\right)=\mathbbm{u}_g\phi(a).$$
\end{example}

We now discuss the composition of cocycle morphisms.

\begin{lemma}\label{compositionlinearmaps}
If $(\phi,h):(A,F,J)\to (B,G,I)$ and $(\psi,l):(B,G,I)\to (C,H,K)$ are cocycle morphisms, then $(\psi\circ\phi,l\circ h):(A,F,J)\to (C,H,K)$ is a cocycle morphism and coincides with the composition of $(\phi,h)$ and $(\psi,l)$. 
\end{lemma}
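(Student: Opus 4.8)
The plan is to prove the statement in two stages: first that the family $l\circ h=\{l^{X}\circ h^{X}\}_{X\in\cC}$ satisfies the axioms of Lemma~\ref{linearmapspicture} and hence defines a cocycle morphism, and then that this cocycle morphism is exactly the linear-maps incarnation of the composition $\mathbbm{w}*\mathbbm{v}$ from Definition~\ref{def: CompCocMor}.

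For the first stage I would check conditions \ref{item:linearmap1}--\ref{item:linearmap5} of Lemma~\ref{linearmapspicture} for $\{l^{X}\circ h^{X}\}_{X\in\cC}$ with respect to the $^*$-homomorphism $\psi\circ\phi\colon A\to C$, each of which follows by stacking the corresponding condition for $h$ on top of that for $l$. For instance \ref{item:linearmap1} comes from $l^{X}(h^{X}(a\rhd x\lhd a'))=l^{X}(\phi(a)\rhd h^{X}(x)\lhd\phi(a'))=\psi(\phi(a))\rhd l^{X}(h^{X}(x))\lhd\psi(\phi(a'))$; naturality \ref{item:linearmap2} and the inner-product identity \ref{item:linearmap3} are immediate compositions of the respective identities; \ref{item:linearmap4} follows by vertically composing the two commuting squares, using $l^{X\otimes Y}\circ h^{X\otimes Y}\circ J_{X,Y}=l^{X\otimes Y}\circ I_{X,Y}\circ(h^{X}\boxtimes h^{Y})=K_{X,Y}\circ((l^{X}\circ h^{X})\boxtimes(l^{Y}\circ h^{Y}))$; and \ref{item:linearmap5} from $(l\circ h)^{1_{\cC}}(a)=l^{1_{\cC}}(\phi(a))=\psi(\phi(a))$. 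This is entirely routine, and Lemma~\ref{linearmapspicture} then upgrades $\{l^{X}\circ h^{X}\}$ to a cocycle morphism $(\psi\circ\phi,l\circ h)$.

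For the second stage, let $\mathbbm{v}$ and $\mathbbm{w}$ be the isometry families attached to $h$ and $l$ by the bijection of Lemma~\ref{linearmapspicture}, so that the composition of $(\phi,h)$ and $(\psi,l)$ is the cocycle morphism $(\psi\circ\phi,\mathbbm{w}*\mathbbm{v})$ of Definition~\ref{def: CompCocMor}, which is well defined by Lemmas~\ref{lemma: SWellDef} and~\ref{lemma: NewCompExtCocRep}. Since the correspondence $\{\mathbbm{v}_X\}\mapsto\{h^X\}$ and $\{h^X\}\mapsto\{\mathbbm{v}_X\}$ of Lemma~\ref{linearmapspicture} are mutually inverse, it suffices to show that the linear-maps family assigned to $\mathbbm{w}*\mathbbm{v}$ via \eqref{hcomp} is precisely $l\circ h$; equivalently, fixing $X\in\cC$ and $x\in F(X)$, I would compute $f\circ(\mathbbm{w}*\mathbbm{v})_X\circ\iota(x)$ and show it equals $l^{X}(h^{X}(x))$. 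Running $x$ through $\iota$ (with an approximate unit $\rho_\nu$ for $C$), then through $S_X$ (with an approximate unit $\eta_\lambda$ for $B$), then through $(\mathbbm{v}_X\boxtimes\id_\psi)$ and $(\id_\phi\boxtimes\mathbbm{w}_X)$ using the explicit formula \eqref{eq: fromHtoV} (introducing approximate units $\zeta_\kappa$ for $A$ and $\xi_\mu$ for $B$), and finally through $f\circ(T\boxtimes\id_{H(X)})$, collapses the two left tensor legs into left actions on $H(X)$. After $f$ every term is an honest element of $H(X)$ of the shape $l^{X}\!\big(\phi(\zeta_\kappa)\rhd\xi_\mu\rhd h^{X}(x)\lhd\eta_\lambda\big)\lhd\rho_\nu$, where I have used condition \ref{item:linearmap1} for $l$ to pull the left multipliers $\psi(\phi(\zeta_\kappa))\psi(\xi_\mu)$ and the right multiplier $\psi(\eta_\lambda)$ inside $l^{X}$.

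The main obstacle is then the bookkeeping of the four nested approximate-unit limits and the justification that they collapse correctly. Working inside $H(X)$ the collapse is clean: condition \ref{item:linearmap1} for $h$ gives $\phi(\zeta_\kappa)\rhd h^{X}(x)=h^{X}(\zeta_\kappa\rhd x)$, so that the $\mu$-limit uses non-degeneracy of the left $B$-action on $G(X)$ together with continuity of $l^X$ (which follows from \ref{item:linearmap3}), the $\kappa$-limit uses non-degeneracy of $F(X)$ and continuity of $h^X$, the $\lambda$-limit uses right-module convergence $h^{X}(x)\lhd\eta_\lambda\to h^{X}(x)$ in $G(X)$, and the $\nu$-limit uses $z\lhd\rho_\nu\to z$ in the right Hilbert $C$-module $H(X)$. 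Each limit is a genuine norm limit in a Banach space on which the surrounding maps act continuously, so the iteration is legitimate and the value is $l^{X}(h^{X}(x))$. This identifies $(\psi\circ\phi,l\circ h)$ with the composition, after which one notes it is a cocycle morphism by the first stage; equality for general (non-elementary) tensors, and hence the full statement, follows by linearity and continuity. It is worth remarking that the cleaner route is to compute $\Phi$ applied to $\mathbbm{w}*\mathbbm{v}$ (as above) rather than $\Psi$ applied to $l\circ h$, precisely because applying $f$ first lands everything in $H(X)$ and thereby avoids the more delicate Cauchy estimates in the balanced tensor products that would otherwise be needed.
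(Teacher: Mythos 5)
Your proof is correct and follows essentially the same two-stage route as the paper: first stacking the conditions of Lemma \ref{linearmapspicture} to see that $(\psi\circ\phi, l\circ h)$ is a cocycle morphism, then identifying it with the composition $(\psi\circ\phi,\mathbbm{w}*\mathbbm{v})$ via the bijection of that lemma. The only (cosmetic) difference is that you verify $\Phi(\mathbbm{w}*\mathbbm{v})=l\circ h$ by pushing $x$ through $\iota$, $(\mathbbm{w}*\mathbbm{v})_X$ and $f$, whereas the paper evaluates $(\mathbbm{w}*\mathbbm{v})_X$ directly on an elementary tensor $x\boxtimes c$ and recognises the result as the formula \eqref{eq: fromHtoV} applied to $l\circ h$ — the two verifications are equivalent since $\Phi$ and $\Psi$ are mutually inverse.
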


\begin{proof}
Clearly $\psi\circ\phi:A\to C$ is a $^*$-homomorphism and $\{l^X\circ h^X: F(X)\to H(X)\}_{X\in\cC}$ is a family of linear maps. Conditions \ref{item:linearmap1}, \ref{item:linearmap2}, \ref{item:linearmap3}, and \ref{item:linearmap5} are immediate. The compatibility with the tensor product follows by stacking the diagrams in \ref{item:linearmap4} of Lemma \ref{linearmapspicture} for $h$ and $l$. Thus, $(\psi\circ\phi,l\circ h):(A,F,J)\to (C,H,K)$ induces a cocycle morphism by Lemma \ref{linearmapspicture}.

Suppose that $(\phi,\mathbbm{v})$ and $(\psi,\mathbbm{w})$ are cocycle morphisms associated to the families of linear maps $\{h^X\}_{X\in\cC}$ and $\{l^X\}_{X\in\cC}$ respectively. We claim that the cocycle morphism $(\psi\circ\phi,\mathbbm{w}*\mathbbm{v})$ has the associated family of linear maps $\{l^X\circ h^X\}_{X\in\cC}$. 

Recall from Definition \ref{def: CompCocMor} that $$(\mathbbm{w}*\mathbbm{v})_X=(T\boxtimes\id_{H(X)})\circ(\mathbbm{w}\circ\mathbbm{v})_X\circ S_X,$$ where $(\mathbbm{w}\circ\mathbbm{v})_X=(\id_{\phi}\boxtimes\mathbbm{w}_X)\circ(\mathbbm{v}_X\boxtimes\id_{\psi})$ (see \eqref{eq: CompCorrMor}). Start with an elementary tensor $x\boxtimes c$ and let $\zeta_\mu$ be an approximate unit of $A$ and $\eta_\lambda$ be an approximate unit of $B$. Following the composition of maps defining $(\mathbbm{w}*\mathbbm{v})_X$, we get that
\begin{align*}
x\boxtimes c &\mapsto \lim\limits_{\lambda}x\boxtimes \eta_\lambda\boxtimes c \\ &\mapsto\lim\limits_{\lambda}\lim\limits_{\mu}\phi(\zeta_\mu)\boxtimes h^X(x)\lhd \eta_\lambda\boxtimes c\\ &= \lim\limits_{\mu} \phi(\zeta_\mu)\boxtimes h^X(x)\boxtimes c \\ &\mapsto \lim\limits_{\mu}\lim\limits_{\lambda} \phi(\zeta_\mu)\boxtimes \psi(\eta_\lambda)\boxtimes l^X(h^X(x))\lhd c\\ &\mapsto \lim\limits_{\mu}\lim\limits_{\lambda}\psi(\phi(\zeta_\mu))\psi(\eta_\lambda)\boxtimes l^X(h^X(x))\lhd c\\ &= \lim\limits_{\mu} \psi(\phi(\zeta_\mu))\boxtimes l^X(h^X(x))\lhd c, 
\end{align*} where the last equality follows since $\eta_\lambda$ is an approximate unit, and in particular fixes $\phi(\zeta_\mu)$ in the limit. But this is precisely the formula in \eqref{eq: fromHtoV} corresponding to the family of linear maps $l\circ h$, so the two compositions agree.
\end{proof}

\begin{lemma}\label{lemma: CategoryCocMor}
The class of $\cC$-$\C$-algebras $(A,F,J)$, together with cocycle morphisms $(\phi,h):(A,F,J)\to (B,G,I)$ defines a category with respect to the composition in Lemma \ref{compositionlinearmaps}. 
\end{lemma}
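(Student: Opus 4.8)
The plan is to reduce the entire verification to the linear maps picture of Lemma \ref{linearmapspicture}, in which composition, by Lemma \ref{compositionlinearmaps}, becomes the pointwise formula $(\psi,l)\circ(\phi,h)=(\psi\circ\phi,\{l^X\circ h^X\}_{X\in\cC})$. Once one is in this picture, the category axioms are inherited verbatim from associativity and unitality of ordinary composition of $^*$-homomorphisms and of linear maps between vector spaces. The only piece of genuine content, namely that the twisted composition $\mathbbm{w}*\mathbbm{v}$ of Definition \ref{def: CompCocMor} agrees with pointwise composition of linear maps, has already been dispatched in Lemma \ref{compositionlinearmaps}.

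First I would exhibit the identity morphism. For a $\cC$-$\C$-algebra $(A,F,J)$, set $\mathrm{id}_{(A,F,J)}=(\id_A,\{\id_{F(X)}\}_{X\in\cC})$, and check that this is a cocycle morphism by verifying conditions \ref{item:linearmap1}--\ref{item:linearmap5} of Lemma \ref{linearmapspicture} for the family $h^X=\id_{F(X)}$. All of these are immediate: conditions \ref{item:linearmap1} and \ref{item:linearmap3} reduce to the bimodule and inner-product structure of $F(X)$, condition \ref{item:linearmap2} to $F(f)=F(f)$, condition \ref{item:linearmap4} to $J_{X,Y}=J_{X,Y}$, and condition \ref{item:linearmap5} to $\id_A$ being the left action in degree $1_{\cC}$.

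Next I would verify the unit and associativity laws in the linear maps picture. For a cocycle morphism $(\phi,h):(A,F,J)\to(B,G,I)$, the composition formula gives
\[
(\phi,h)\circ\mathrm{id}_{(A,F,J)}=(\phi\circ\id_A,\{h^X\circ\id_{F(X)}\}_{X\in\cC})=(\phi,h),
\]
and likewise $\mathrm{id}_{(B,G,I)}\circ(\phi,h)=(\phi,h)$, since $\id_{G(X)}\circ h^X=h^X$. For associativity, given composable cocycle morphisms $(\phi,h)$, $(\psi,l)$, and $(\chi,m)$, both bracketings compute to $(\chi\circ\psi\circ\phi,\{m^X\circ l^X\circ h^X\}_{X\in\cC})$, using associativity of composition of $^*$-homomorphisms in the first coordinate and of linear maps in the second. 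That each intermediate composite is again a cocycle morphism is guaranteed by Lemma \ref{compositionlinearmaps}.

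The hard part is not in this final verification but in the groundwork already laid. Composition of cocycle morphisms was defined on the isometry side through the twisted formula $\mathbbm{w}*\mathbbm{v}$ of Definition \ref{def: CompCocMor}, whose associativity is far from manifest, since $S_X$ and $T$ intertwine the three-fold tensor product in a nontrivial way. What rescues the argument is precisely the identification in Lemma \ref{compositionlinearmaps} of $\mathbbm{w}*\mathbbm{v}$ with $\{l^X\circ h^X\}$ under the bijection of Lemma \ref{linearmapspicture}: passing to the linear maps picture dissolves the twisting and renders the axioms formal. I would therefore present the proof as a direct appeal to Lemmas \ref{linearmapspicture} and \ref{compositionlinearmaps}, spelling out only the identity morphism and the two one-line laws above.
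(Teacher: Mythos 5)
Your proposal is correct and takes essentially the same route as the paper: the paper's proof is a two-line observation that, in the linear-maps picture of Lemma \ref{linearmapspicture}, the composition of Lemma \ref{compositionlinearmaps} is pointwise and hence manifestly associative, with $(\id_A,\{\id_{F(X)}\})$ serving as the identity. Your version merely spells out the same verification (identity is a cocycle morphism, unit laws, associativity) in more detail, and your remark that the real content lives in Lemma \ref{compositionlinearmaps} matches the paper's implicit logic.
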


\begin{proof}
The composition in Lemma \ref{compositionlinearmaps} is easily seen to be associative. Moreover, for any cocycle morphism $(\phi,h)$, the cocycle morphisms $(\id_A,\{\id_{F(X)}\})$ and $(\id_B,\{\id_{G(X)}\})$ are left and right identities respectively. 
\end{proof}

In the spirit of \cite[Definition 1.16]{cocyclecategszabo}, we have constructed a category of $\cC$-$\C$-algebras.

\begin{defn}\label{cocyclecatgeneralised}
  The \emph{generalised cocycle category} $\C_{\cC}$ is defined as the category whose objects are $\cC$-$\C$-algebras and whose morphisms are cocycle morphisms.\ Composition of morphisms in $\C_{\cC}$ is defined in Lemma \ref{compositionlinearmaps}. On any object $(A,F,J)$, the identity morphism in this category is given by $(\id_A,\{\id_{F(X)}\}_{X\in\cC})$.\ A cocycle morphism $(\phi,h):(A,F,J)\to (B,G,I)$ is invertible in this category if and only if $\phi:A\to B$ is an isomorphism and $h^X$ is bijective for any $X\in\cC$, in which case the inverse is given by $(\phi^{-1},h^{-1})$. Following the terminology in \cite{cocyclecategszabo}, we say that an invertible morphism in this category is a \emph{cocycle conjugacy}.
\end{defn}

\section{Inductive limits}\label{section: IndLimits}

In this section, we construct inductive limits in $\C_{\cC}$ for semisimple $\cC$.\ This is done in {\cite[Proposition 4.4]{limitfusioncategclassif}}, when restricted to unital $\C$-algebras and unital, injective connecting maps.\ Our approach is slightly different and does not need these assumptions.\ Before starting our construction, let us set up some notation.

Let $\cC$ be a semisimple $\C$-tensor category, and $A_n$ be a sequence of separable $\C$-algebras on which $\cC$ acts via the pair $(F_n,J^{(n)})$. Then, let
\begin{equation}\label{inductivelimitconnectingmaps}
(\phi_n,h_n):(A_n,F_n,J^{(n)})\to (A_{n+1},F_{n+1},J^{(n+1)})
\end{equation} be a sequence of cocycle morphisms. Recall that the $\C$-inductive limit $A=\lim\limits_{\longrightarrow}\{A_n,\phi_n\}$ is defined as the completion of 
\begin{equation*}
A^{(0)}=\frac{\left\{(a_n)_{n\geq 1}\in\bigoplus_{\ell^\infty}A_n: \lim_{n\to\infty}\|\phi_n(a_n)-a_{n+1}\|=0\right\}}{\bigoplus_{c_0}A_n}
\end{equation*} with respect to the topology induced by the norm $\|(a_n)_{n\geq 1}\|=\lim\limits_{n\to\infty}\|a_n\|_{A_n}$.\ Recall that the connecting maps $\phi_{n,\infty}:A_n\to A$ are given by 
\begin{equation}\label{eq: InfConnMaps}
(\phi_{n,\infty}(a_n))_k=
\begin{cases}
   \phi_{n,k}(a_n), \ k\geq n \\
   0, \quad \quad \quad k<n
\end{cases}
\end{equation} for all $n\geq 1$, where we adopt the standard notation $\phi_{n,m}:=\phi_{m-1}\circ\ldots\circ\phi_n$ and $\phi_{n,n}=\id_{A_n}$ for any $m>n\geq 1$.

Similarly, for any $X\in\cC$ and any $m>n\geq 1$, we consider the natural family $h_{n,m}^X:F_n(X)\to F_m(X)$ obtained by composition, with the convention that $h_{n,n+1}^X=h_n^X$.

To build an action on $A$, we start by constructing bimodules that will form the image of the functor.\ Essentially, for any $X\in\cC$, we can build a Hilbert $A$-$A$-bimodule as an inductive limit of $F_n(X)$.\ The construction is very similar to the one in \eqref{eq: SeqAlgBimodule}.

Define
\begin{equation*}
F^{(0)}(X)=\frac{\{(x_n)_{n\geq 1}\in \bigoplus_{\ell^\infty}F_n(X):  \lim_{n\to\infty}\|h_n^X(x_n)-x_{n+1}\|=0\}}{\bigoplus_{c_0}F_n(X)},
\end{equation*} where the norm on $F_n(X)$ is induced by the right inner product. 

For any $x=(x_n)_{n\geq 1},y=(y_n)_{n\geq 1} \in F^{(0)}(X)$ and any $a=(a_n)_{n\geq 1}\in A^{(0)}$, we can define 
\begin{equation}\label{Aactions}
 x\lhd a= (x_n\lhd a_n)_{n\geq 1}
\end{equation} and 
\begin{equation}\label{eq: InnerProdIndLim}
 \langle x,y\rangle_{A^{(0)}}=(\langle x_n,y_n\rangle_{A_n})_{n\geq 1}.  
\end{equation}

\begin{lemma}\label{checkconstruction}
For any $X\in\cC$, $F^{(0)}(X)$ equipped with the structure in \eqref{Aactions} and \eqref{eq: InnerProdIndLim} is a right pre-Hilbert-$A^{(0)}$-module.
\end{lemma}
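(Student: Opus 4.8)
The plan is to verify directly the defining properties \ref{item:1Hilbert}--\ref{item:4Hilbert} of Definition \ref{defn:rightHilbertmodules}, following closely the pattern of Lemma \ref{lemma: ConstructionCheck}. Throughout, I regard $A^{(0)}$ as a $*$-subalgebra of the quotient $C^*$-algebra $\bigoplus_{\ell^\infty} A_n/\bigoplus_{c_0} A_n$ and $F^{(0)}(X)$ as a subspace of the Banach space $\bigoplus_{\ell^\infty} F_n(X)/\bigoplus_{c_0} F_n(X)$. Since the module axioms together with the sesquilinearity \ref{item:1Hilbert}, the symmetry \ref{item:4Hilbert}, and the compatibility \ref{item:2Hilbert} of the inner product all hold entry-by-entry, the substance of the proof is the \emph{well-definedness} of the two operations in \eqref{Aactions} and \eqref{eq: InnerProdIndLim}: one must show that they land in $F^{(0)}(X)$ and $A^{(0)}$ respectively, and that they do not depend on the chosen representatives.

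For the right action, given $x=(x_n)_{n\geq 1}\in F^{(0)}(X)$ and $a=(a_n)_{n\geq 1}\in A^{(0)}$, each entry $x_n\lhd a_n$ lies in $F_n(X)$ and the sequence is bounded, since $\|x_n\lhd a_n\|\leq\|x_n\|\|a_n\|$. For the eventual coherence I will use the right-module compatibility $h_n^X(x_n\lhd a_n)=h_n^X(x_n)\lhd\phi_n(a_n)$, which is a consequence of \ref{item:linearmap1} and \ref{item:linearmap3} of Lemma \ref{linearmapspicture}; for $n$ large enough that both $h_n^X(x_n)=x_{n+1}$ and $\phi_n(a_n)=a_{n+1}$ hold, this gives $h_n^X(x_n\lhd a_n)=x_{n+1}\lhd a_{n+1}$, so $(x_n\lhd a_n)_{n\geq 1}\in F^{(0)}(X)$. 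Independence of representatives follows from $\|(x_n-x_n')\lhd a_n\|\leq\|x_n-x_n'\|\|a_n\|\to 0$ and the analogous estimate when $a$ is replaced by a second representative.

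For the inner product, each entry $\langle x_n,y_n\rangle_{A_n}$ lies in $A_n$ and the sequence is bounded by Cauchy--Schwarz. Coherence is exactly condition \ref{item:linearmap3}: for large $n$,
\[
\phi_n(\langle x_n,y_n\rangle_{A_n})=\langle h_n^X(x_n),h_n^X(y_n)\rangle_{A_{n+1}}=\langle x_{n+1},y_{n+1}\rangle_{A_{n+1}},
\]
so $(\langle x_n,y_n\rangle_{A_n})_{n\geq1}\in A^{(0)}$. Independence of the representatives is the same Cauchy--Schwarz computation as in Lemma \ref{lemma: ConstructionCheck}. It then remains to check \ref{item:3Hilbert}: positivity holds because $(\langle x_n,x_n\rangle_{A_n})_{n\geq1}$ is a positive element of $\bigoplus_{\ell^\infty}A_n$ and the quotient map onto $\bigoplus_{\ell^\infty}A_n/\bigoplus_{c_0}A_n$ is a $*$-homomorphism, while definiteness follows since $\langle x,x\rangle_{A^{(0)}}=0$ forces $\|x_n\|^2=\|\langle x_n,x_n\rangle_{A_n}\|\to0$, i.e.\ $x=0$ in $F^{(0)}(X)$.

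I expect the main obstacle to be the verification that the two operations do not leave the inductive-limit spaces, that is, establishing the eventual-coherence relations above, since this is the only place where the structure of a cocycle morphism genuinely enters, through the compatibility of $h_n^X$ with the bimodule actions and with the inner products (Lemma \ref{linearmapspicture}). The remaining subtlety is the definiteness clause of \ref{item:3Hilbert}, which must be read in the quotient rather than entry-by-entry; everything else reduces to pointwise verification.
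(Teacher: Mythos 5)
Your proposal is correct and follows essentially the same route as the paper's proof: the only substantive content is the eventual-coherence of the two operations, obtained from conditions \ref{item:linearmap1} and \ref{item:linearmap3} of Lemma \ref{linearmapspicture}, with independence of representatives and the algebraic axioms checked entrywise exactly as in Lemma \ref{lemma: ConstructionCheck}. Your additional care in noting that continuity (via \ref{item:linearmap3}) enters the right-module compatibility, and that definiteness must be read in the quotient, is consistent with, and slightly more explicit than, the paper's treatment.
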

\begin{proof}
We start by checking that the right action is well-defined. Firstly, if $x=(x_n)_{n\geq 1}\in F^{(0)}(X)$ and $a=(a_n)_{n\geq 1}\in A^{(0)}$, then $(x_n\lhd a_n)_{n\geq 1}$ induces an element in $F^{(0)}(X)$. By \ref{item:linearmap1} of Lemma \ref{linearmapspicture}, $h_n^X(x_n\lhd a_n)=h_n^X(x_n)\lhd \phi_n(a_n)$ for any $n\in\mathbb{N}, a_n\in A_n$, and $x_n\in F_n(X)$. Moreover, since $x\in F^{(0)}(X)$ and $a\in A^{(0)}$, there exists $N\in\mathbb{N}$ such that $h_n^X(x_n)= x_{n+1}$ and $\phi_n(a_n)=a_{n+1}$ for all $n\geq N$. Hence, $$h_n^X(x_n\lhd a_n)=x_{n+1}\lhd a_{n+1}$$ for all $n\geq N$, as required.

That \eqref{Aactions} is independent of the choice of representative sequences, follows exactly as in the proof of Lemma \ref{lemma: ConstructionCheck}.\ Thus, \eqref{Aactions} gives a well-defined right action of $A^{(0)}$. 

We now check that \eqref{eq: InnerProdIndLim} gives a well-defined right pre-inner product. Firstly, if $x=(x_n)_{n\geq 1},y=(y_n)_{n\geq 1}\in F^{(0)}(X)$, then $\langle x,y\rangle$ is an element of $A^{(0)}$. By \ref{item:linearmap3} of Lemma \ref{linearmapspicture} applied to each map $h_n^X$, we have that $$\phi_n(\langle x_n,y_n\rangle_{A_n})=\langle h_n^X(x_n),h_n^X(y_n)\rangle_{A_{n+1}}.$$ Moreover, since $x,y\in F^{(0)}(X)$, there exists $N\in\mathbb{N}$ such that for any $n\geq N$, $\langle h_n^X(x_n),h_n^X(y_n)\rangle_{A_{n+1}}=\langle x_{n+1},y_{n+1}\rangle_{A_{n+1}}$. Hence, $$\phi_n(\langle x_n,y_n\rangle_{A_n})=\langle x_{n+1},y_{n+1}\rangle_{A_{n+1}}$$ for all $n\geq N$, as required.

That \eqref{eq: InnerProdIndLim} is independent of the choice of representative sequences, follows as in the proof of Lemma \ref{lemma: ConstructionCheck}. Thus, \eqref{eq: InnerProdIndLim} gives a well-defined $A^{(0)}$-valued map. It is now straightforward to check that this function is right linear, left conjugate linear, and antisymmetric. Finally, it is clear that $\langle x,x\rangle_{A^{(0)}} \geq 0$ and $\langle x,x\rangle_{A^{(0)}} = 0$ if and only if $\langle x_n,x_n\rangle_{A_n}$ converges to $0$ i.e. $x=0$ in $F^{(0)}(X)$. Thus, the conclusion follows.
\end{proof}

For any $X\in\cC$, since $A^{(0)}$ is a dense $^*$-subalgebra of $A$, combining Lemma \ref{checkconstruction} and {\cite[Lemma 2.16]{raeburnwilliamsbook}}, we form the completion of $F^{(0)}(X)$, denoted by $F(X)$. This is a right-Hilbert $A$-module.

\begin{lemma}\label{buildleftactionHilbertbimodule}
For any $X\in\cC$, with the notation above, $F(X)$ is a non-degenerate right Hilbert $A$-$A$-bimodule.
\end{lemma}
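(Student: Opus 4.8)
The plan is to equip $F(X)$ with a left $A$-action defined pointwise, in direct analogy with the right action constructed in Lemma \ref{checkconstruction}, and then to verify that this action is by adjointable operators with the adjoint prescribed by Definition \ref{nonunitalbimodules}. Concretely, for $a=(a_n)_{n\geq 1}\in A^{(0)}$ and $x=(x_n)_{n\geq 1}\in F^{(0)}(X)$, I would set
$$a\rhd x=(a_n\rhd x_n)_{n\geq 1},$$
where each $a_n\rhd x_n$ denotes the given left action of $A_n$ on the Hilbert $A_n$-$A_n$-bimodule $F_n(X)$.

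First I would check that this formula lands in $F^{(0)}(X)$. Since $a\in A^{(0)}$ and $x\in F^{(0)}(X)$, there is some $N$ with $\phi_n(a_n)=a_{n+1}$ and $h_n^X(x_n)=x_{n+1}$ for all $n\geq N$. Applying condition \ref{item:linearmap1} of Lemma \ref{linearmapspicture} to each $h_n^X$ yields
$$h_n^X(a_n\rhd x_n)=\phi_n(a_n)\rhd h_n^X(x_n)=a_{n+1}\rhd x_{n+1}\qquad(n\geq N),$$
so the coherence condition defining $F^{(0)}(X)$ holds eventually. Independence of the choice of representative sequences follows exactly as in the proof of Lemma \ref{lemma: ConstructionCheck}, using the estimate $\|(a_n-a_n')\rhd x_n\|\leq\|a_n-a_n'\|\,\|x_n\|$ together with the analogous bound when the representative of $x$ is changed. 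Since each level-wise action is contractive in $\|a_n\|$, one has $\|a\rhd x\|=\lim_n\|a_n\rhd x_n\|\leq\|a\|\,\|x\|$, so the action extends continuously from $A^{(0)}$ and $F^{(0)}(X)$ to the $\C$-algebra $A$ and the completion $F(X)$.

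Finally, to conclude via Definition \ref{nonunitalbimodules} it remains to show that for each $a\in A$ the operator $x\mapsto a\rhd x$ is adjointable with adjoint $x\mapsto a^*\rhd x$. Because each $F_n(X)$ is a Hilbert $A_n$-$A_n$-bimodule, the level-wise left action is adjointable with adjoint the action of $a_n^*$, so
$$\langle a\rhd x,y\rangle_{A^{(0)}}=(\langle a_n\rhd x_n,y_n\rangle_{A_n})_{n\geq 1}=(\langle x_n,a_n^*\rhd y_n\rangle_{A_n})_{n\geq 1}=\langle x,a^*\rhd y\rangle_{A^{(0)}}$$
for all $x,y\in F^{(0)}(X)$, and this identity passes to $F(X)$ by continuity of the inner product and of the action. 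Hence $F(X)$ is a right Hilbert $A$-$A$-bimodule. I expect no serious obstacle: the only point requiring genuine care is the preservation of the coherence condition under the left action, which is precisely where condition \ref{item:linearmap1} enters, while adjointability reduces immediately to the adjointability inherited from each $F_n(X)$.
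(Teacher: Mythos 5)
Your proposal is correct and follows essentially the same route as the paper: define the left action pointwise on $F^{(0)}(X)$, verify the coherence condition via condition \textit{(i)} of Lemma \ref{linearmapspicture}, and inherit adjointability from the level-wise actions before extending to the completion by density. The only difference is that you spell out the details that the paper delegates to the proof of Lemma \ref{checkconstruction} and to a cited extension lemma.
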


\begin{proof}

We start by defining a left $A$-action on $F(X)$. For any $x=(x_n)_{n\geq 1} \in F^{(0)}(X)$ and any $a=(a_n)_{n\geq 1}\in A^{(0)}$, we can define 
\begin{equation}\label{eq: LeftAaction}
 a\rhd x= (a_n\rhd x_n)_{n\geq 1}.
\end{equation} The fact that \eqref{eq: LeftAaction} is a well-defined left $A^{(0)}$-action on $F^{(0)}(X)$ follows as in the proof of Lemma \ref{checkconstruction}. Moreover, the left $A^{(0)}$-action is adjointable, since the left action of $A_n$ on $F_n(X)$ is adjointable for any $n\geq 1$. Thus, using \cite[Lemma 2.16]{raeburnwilliamsbook}, it extends to a left action of $A$ on $F(X)$ by density.
\end{proof}

We now show that the assignment $X\to F(X)$ extends to an action of $\cC$ on $A$.

\begin{lemma}\label{inductivelimitfunctor}
With the notation above, $F:\cC^{\rev}\to \Corr_0^{\sep}(A) $ is a $\C$-functor. Moreover, there exists a unitary natural isomorphism $$J:=\{J_{X,Y}: F(Y)\boxtimes F(X)\to F(X\otimes Y): X,Y\in \cC\}$$ such that the pair $(F,J)$ is an action of $\cC$ on $A$.
\end{lemma}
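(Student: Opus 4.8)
The plan is to define $F$ on morphisms and the tensor data $J$ levelwise, observing that every identity we need holds pointwise at each stage $n$ and that the connecting maps are contractions, so that everything passes to the inductive limit by a density (propagation) argument. For a morphism $f\in\cC(X\to Y)$ I would set $F(f)\big((x_n)_{n\geq1}\big)=\big(F_n(f)(x_n)\big)_{n\geq1}$ on $F^{(0)}(X)$. This lands in $F^{(0)}(Y)$ exactly because of the naturality condition \ref{item:linearmap2} of Lemma \ref{linearmapspicture} for each $(\phi_n,h_n)$, i.e.\ $F_{n+1}(f)\circ h_n^X=h_n^Y\circ F_n(f)$, which sends a compatible sequence to a compatible sequence. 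Since each $F_n$ is a $\C$-functor, it is contractive on morphism spaces, so $\|F_n(f)\|\leq\|f\|$ and $F(f)$ is bounded, extending to $F(X)$; functoriality, $\mathbb{C}$-linearity and $F(f)^*=F(f^*)$ all hold coordinatewise and therefore on $F(X)$, using that the $A$-valued inner product on $F(X)$ is computed coordinatewise by \eqref{eq: InnerProdIndLim}. That $F$ is valued in $\Corr_0^{\sep}(A)$ follows from non-degeneracy and separability of each $F_n(X)$: given $x\in F^{(0)}(X)$, approximate $x_N$ at a single level $N$ (by $A_N\rhd F_N(X)$, respectively by a fixed countable dense set), propagate the approximant forward via the maps $h^X_{N,n}$, and bound the inductive-limit error by the level-$N$ error using that each $h_{N,n}^X$ is a contraction (by \ref{item:linearmap3}).

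Next I would construct the natural isomorphism by setting, on elementary tensors of sequences, $J_{X,Y}(x\boxtimes y)=\big(J^{(n)}_{X,Y}(x_n\boxtimes y_n)\big)_{n\geq1}$. The crucial point is that this sequence lies in $F^{(0)}(X\otimes Y)$: condition \ref{item:linearmap4} of Lemma \ref{linearmapspicture} for $(\phi_n,h_n)$ gives $h_n^{X\otimes Y}\circ J^{(n)}_{X,Y}=J^{(n+1)}_{X,Y}\circ(h_n^X\boxtimes h_n^Y)$, so a compatible pair $(x,y)$ yields a compatible sequence. Because each $J^{(n)}_{X,Y}$ is an $A_n$-bimodule map, $J_{X,Y}$ respects the balancing relation $(x\lhd a)\boxtimes y=x\boxtimes(a\rhd y)$ defining the internal tensor product; and because the inner product on $F(X)\boxtimes F(Y)$ is computed coordinatewise while each $J^{(n)}_{X,Y}$ preserves inner products, $J_{X,Y}$ is isometric. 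Hence it extends to an isometry $F(X)\boxtimes F(Y)\to F(X\otimes Y)$.

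The main obstacle is surjectivity of $J_{X,Y}$, which amounts to identifying $F(X)\boxtimes F(Y)$ with the inductive limit of the $F_n(X)\boxtimes F_n(Y)$ along the maps $h_n^X\boxtimes h_n^Y$. I would first record that each $h_n^X\boxtimes h_n^Y$ satisfies $\langle(h_n^X\boxtimes h_n^Y)\xi,(h_n^X\boxtimes h_n^Y)\xi\rangle=\phi_n(\langle\xi,\xi\rangle)$ (a direct computation from \ref{item:linearmap1} and \ref{item:linearmap3}), hence is a contraction, and that it intertwines $J^{(n)}_{X,Y}$ with $J^{(n+1)}_{X,Y}$ (the adjoint of \ref{item:linearmap4}). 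Then, given $(z_n)_{n\geq1}\in F^{(0)}(X\otimes Y)$, I put $w_N=(J^{(N)}_{X,Y})^*(z_N)$, approximate $w_N$ by a finite sum of elementary tensors in $F_N(X)\boxtimes F_N(Y)$, propagate this sum forward to an element of $F^{(0)}(X)\odot F^{(0)}(Y)$, and combine the intertwining relation with the contractivity of $h_{N,n}^{X\otimes Y}$ to bound the inductive-limit distance between its image under $J_{X,Y}$ and $(z_n)_{n\geq1}$ by the level-$N$ error. This shows $J_{X,Y}$ has dense range, so it is a unitary.

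It remains to verify that $(F,J)$ is a $\C$-tensor functor. Naturality of $J$ in both variables and the coherence hexagon \eqref{diagramJmaps} relating $J$ to the associators of $\Corr_0(A)$ hold on the dense submodules $F^{(0)}$ because they hold pointwise for each $\C$-tensor functor $(F_n,J^{(n)})$, the associators being the levelwise rebracketing isomorphisms, and they extend to $F$ by continuity. Finally the tensor unit is preserved: by \ref{item:linearmap5} the connecting maps of the system $F^{(0)}(1_\cC)$ are precisely the $\phi_n$, so $F^{(0)}(1_\cC)=A^{(0)}$ and $F(1_\cC)$ is the identity correspondence $A$ of $\Corr_0(A)$. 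Combining these facts shows that $(F,J)$ is an action of $\cC$ on $A$.
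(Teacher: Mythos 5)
Your proposal is correct and follows essentially the same route as the paper: define $F$ on morphisms and $J_{X,Y}$ levelwise on the dense submodules $F^{(0)}$, use the compatibility conditions of Lemma \ref{linearmapspicture} to see that compatible sequences are sent to compatible sequences, and extend by continuity. The only substantive difference is that you spell out the surjectivity of $J_{X,Y}$ (pulling back at a finite stage via $(J^{(N)}_{X,Y})^*$, approximating by elementary tensors, and propagating forward using contractivity of the connecting maps), a point the paper dispatches with the one-line assertion that unitarity of each $J^{(n)}_{X,Y}$ passes to the limit; your added detail is accurate and fills in exactly what that assertion leaves implicit.
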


\begin{proof}
Since $F_n(X)\in\Corr_0^{\sep}(A_n)$ for any $X\in\cC$ and any $n\in\mathbb{N}$, it follows that $F(X)\in\Corr_0^{\sep}(A)$ for any $X\in\cC$. 

Suppose $X,Y\in\cC$ and $f:X\to Y$ is a morphism in $\cC$. We define $F(f)(x)=(F_n(f)(x_n))_{n\geq 1}$ for any $x\in F^{(0)}(X)$. As each $F_n$ is a C$^*$-functor, the assignment $f\mapsto F_n(f)$ is contractive (see for example \cite[Definition~1.5]{GLR85}) and so $F(f)$ is bounded on $F^{(0)}(X)$. Therefore, one may extend $F(f)$ to a bounded $A$-$A$-bimodule map on $F(X)$. This operator is further adjointable as each $F_n(f)$ is adjointable. It is straightforward to check that $F$ defines a $\C$-functor. 

Furthermore, for any $X,Y\in\cC$, $x=(x_n)_{n\geq 1}\in F^{(0)}(X)$, and $y=(y_n)_{n\geq 1}\in F^{(0)}(Y)$, let $$J_{X,Y}(y\boxtimes x)=(J_{X,Y}^{(n)}(y_n\boxtimes x_n))_{n\geq 1}.$$We claim that $J_{X,Y}$ is well-defined. Firstly, note that $J_{X,Y}$ is independent of the choice of representative sequences as $J_{X,Y}^{(n)}$ is an isometry for any $n\in\mathbb{N}$ and $\lim_{n\to\infty}(F_n(Y)\boxtimes F_n(X))\cong F(Y)\boxtimes F(X)$.

Secondly, there exists $N\in\mathbb{N}$ such that for any $n\geq N$, $$h_n^X(x_n)=x_{n+1}\ \text{and} \ h_n^Y(y_n)=y_{n+1}.$$ Then, Lemma \ref{linearmapspicture} gives that $$h_n^{X\otimes Y}(J_{X,Y}^{(n)}(y_n\boxtimes x_n))=J_{X,Y}^{(n+1)}(h_n^Y(y_n)\boxtimes h_n^X(x_n))=J_{X,Y}^{(n+1)}(y_{n+1}\boxtimes x_{n+1}),$$ so the image of $J_{X,Y}$ is indeed contained in $F^{(0)}(X\otimes Y)$. By density, we can now extend $J_{X,Y}$ to $F(Y)\boxtimes F(X)$. 

Moreover, since each $J^{(n)}$ is a unitary natural isomorphism, so is $J$. Finally, since each family $\{J_{X,Y}^{(n)}:X,Y\in\cC\}$ satisfies a commuting diagram as in \eqref{diagramJmaps}, so does the collection of maps $\{J_{X,Y}:X,Y\in\cC\}$. Hence, the pair $(F,J)$ gives an action of $\cC$ on $A$.
\end{proof}

We now show the existence of sequential inductive limits in $\C_{\cC}$.

\begin{prop}
The triple $(A,F,J)$ constructed above defines the inductive limit of the inductive system in \eqref{inductivelimitconnectingmaps}.  
\end{prop}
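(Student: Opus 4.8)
The plan is to verify the universal property of the sequential inductive limit directly, working throughout in the linear-maps picture of Lemma \ref{linearmapspicture}. This reduces every assertion to a componentwise statement that passes to the limit, with the behaviour of ordinary $\C$-inductive limits handling the underlying $^*$-homomorphisms.

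First I would construct the canonical cocycle morphisms into the limit. The $^*$-homomorphisms $\phi_{n,\infty}:A_n\to A$ are the standard connecting maps recorded in \eqref{eq: InfConnMaps}. For each $X\in\cC$ I define $h_{n,\infty}^X:F_n(X)\to F(X)$ by $(h_{n,\infty}^X(x_n))_k=h_{n,k}^X(x_n)$ for $k\geq n$ and $0$ for $k<n$, in direct analogy with \eqref{eq: InfConnMaps}. Since $h_k^X(h_{n,k}^X(x_n))=h_{n,k+1}^X(x_n)$, this lands in $F^{(0)}(X)$. That the pair $(\phi_{n,\infty},h_{n,\infty})$ satisfies conditions \ref{item:linearmap1}--\ref{item:linearmap5} of Lemma \ref{linearmapspicture}, and hence is a cocycle morphism, follows because each composite $h_{n,k}^X$ satisfies these conditions and because the left/right actions, inner products, the morphisms $F_k(f)$, and the maps $J^{(k)}$ are all defined componentwise on the dense submodules $F^{(0)}(\cdot)$; in particular condition \ref{item:linearmap3} gives $\|h_{n,\infty}^X(x_n)\|^2=\|\phi_{n,\infty}(\langle x_n,x_n\rangle)\|\leq\|x_n\|^2$, so each $h_{n,\infty}^X$ is contractive. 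Compatibility with the connecting morphisms, namely $(\phi_{n,\infty},h_{n,\infty})=(\phi_{n+1,\infty},h_{n+1,\infty})\circ(\phi_n,h_n)$, is then immediate from Lemma \ref{compositionlinearmaps} together with the relations $\phi_{n+1,\infty}\circ\phi_n=\phi_{n,\infty}$ and $h_{n+1,\infty}^X\circ h_n^X=h_{n,\infty}^X$, which hold as identities in the limit (the representative sequences differ only in finitely many coordinates).

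Next I would establish the universal property. Suppose $(\psi_n,l_n):(A_n,F_n,J^{(n)})\to (B,G,I)$ is a compatible family, so that $(\psi_{n+1},l_{n+1})\circ(\phi_n,h_n)=(\psi_n,l_n)$ for all $n$. The $^*$-homomorphism $\psi:A\to B$ is supplied by the universal property of the $\C$-inductive limit: for $(a_n)_{n\geq 1}\in A^{(0)}$ the sequence $\psi_n(a_n)$ is eventually constant, since $\psi_{n+1}(a_{n+1})=\psi_{n+1}(\phi_n(a_n))=\psi_n(a_n)$ once $\phi_n(a_n)=a_{n+1}$, and I let $\psi$ be its continuous extension. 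For each $X\in\cC$ I define $l^X:F(X)\to G(X)$ on $F^{(0)}(X)$ by $l^X((x_n)_{n\geq 1})=\lim_n l_n^X(x_n)$; the same eventual-constancy argument applies, now using $l_{n+1}^X\circ h_n^X=l_n^X$ (which follows from Lemma \ref{compositionlinearmaps}), while condition \ref{item:linearmap3} for each $l_n^X$ forces $\|l_n^X(x_n)\|\leq\|x_n\|$, so $l^X$ is well-defined on equivalence classes, contractive, and extends continuously to $F(X)$. Verifying that $(\psi,l)$ satisfies \ref{item:linearmap1}--\ref{item:linearmap5} is again a componentwise check on $F^{(0)}(X)$, where the defining sequences are eventually stationary, followed by density; the tensor-compatibility \ref{item:linearmap4} uses the corresponding diagram for each $l_n^X$ and that $J$, $I$ are defined componentwise. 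By construction $(\psi,l)\circ(\phi_{n,\infty},h_{n,\infty})=(\psi_n,l_n)$.

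Finally, uniqueness follows from density: $\bigcup_n\phi_{n,\infty}(A_n)$ is dense in $A$, and for each $X$ every element of $F^{(0)}(X)$ equals $h_{N,\infty}^X(x_N)$ in the limit for suitable $N$, so $\bigcup_n h_{n,\infty}^X(F_n(X))$ is dense in $F(X)$; any cocycle morphism making the triangles commute must agree with $(\psi,l)$ on these dense sets, hence everywhere by continuity. The only genuine subtlety is ensuring that the limit maps $l^X$ are well-defined on equivalence classes and bounded, and this is exactly where the inner-product identity \ref{item:linearmap3} of Lemma \ref{linearmapspicture} does the work; everything else reduces, via Lemmas \ref{linearmapspicture} and \ref{compositionlinearmaps}, to the already-understood behaviour of $\C$-inductive limits applied componentwise.
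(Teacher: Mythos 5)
Your proposal is correct and follows essentially the same route as the paper: define $h_{n,\infty}^X$ componentwise in analogy with \eqref{eq: InfConnMaps}, verify the conditions of Lemma \ref{linearmapspicture} pointwise, and obtain the induced map on $F(X)$ for the universal property from the eventually constant/compatible sequences on the dense submodule $F^{(0)}(X)$, using condition \ref{item:linearmap3} for contractivity and continuous extension. The only cosmetic difference is that you phrase the induced linear maps as limits over $F^{(0)}(X)$ and add an explicit uniqueness argument, whereas the paper defines them on the dense union $\bigcup_k h_{k,\infty}^X(F_k(X))$; these amount to the same thing.
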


\begin{proof}
For any $n\geq 1$, we need to define cocycle morphisms $$(\phi,h)_{n,\infty}:(A_n,F_n,J^{(n)})\to (A,F,J)$$ such that for any $n\geq 1$, the diagram 
\begin{equation}\label{univpropertyindlimit}
\begin{tikzcd}[row sep=4em, column sep=2em]
    (A_n,F_n,J^{(n)}) \arrow{rr}{(\phi,h)_{n,\infty}} \arrow[swap]{dr}{(\phi_n,h_n)} & & (A,F,J) \\
    & (A_{n+1},F_{n+1},J^{(n+1)}) \arrow[swap]{ur}{(\phi,h)_{n+1,\infty}}, \\[-4.6em]
\end{tikzcd}
\end{equation}commutes.

Recalling \eqref{eq: InfConnMaps}, for any $n\geq 1$, we have a $^*$-homomorphism $\phi_{n,\infty}:A_n\to A$. To define a cocycle morphism from $(A_n,F_n,J^{(n)})$ to $(A,F,J)$, it suffices to find a collection of linear maps $\{h_{n,\infty}^X:F_n(X)\to F(X)\}_{X\in\cC}$ satisfying the conditions in Lemma \ref{linearmapspicture}. If $X\in\cC$ and $x_n\in F_n(X)$ for $n\in\mathbb{N}$, we define 
\begin{equation}\label{eq: ConnLinMaps}
(h_{n,\infty}^X(x_n))_k=
\begin{cases}
 h_{n,k}^X(x_n) & \ k\geq n \\
0 & k<n.
\end{cases}
\end{equation} By convention, $h_{n,n}^X(x_n)=x_n$ for any $X\in\cC$ and any $x_n\in F_n(X)$. Then, $h_{n,\infty}^X$ is a linear map into $F(X)$. Moreover, the conditions of Lemma \ref{linearmapspicture} are checked pointwise since $(\phi_{n,k},h_{n,k})$ is a cocycle morphism for any $k>n$. Thus, for any $n\geq 1$, the pair $(\phi_{n,\infty},h_{n,\infty})$ defines a cocycle morphism from $(A_n,F_n,J^{(n)})$ to $(A,F,J)$.

We now check that the triple $(A,F,J)$ is the inductive limit of the sequence of triples $(A_n,F_n,J^{(n)})$, together with connecting morphisms $(\phi_n,h_n)$. Firstly, it is clear that \eqref{univpropertyindlimit} commutes. It remains to check that $(A,F,J)$ satisfies the universal property. Precisely, let $B$ be a $\C$-algebra and $(B,G,I)$ defining an action of $\cC$ on $B$. Suppose there exists a sequence of cocycle morphisms $(\psi,l)_{n,\infty}:(A_n,F_n,J^{(n)})\to (B,G,I)$ such that the diagram
\begin{equation}\label{eq: UnivProp}
\begin{tikzcd}[row sep=4em, column sep=2em]
    (A_n,F_n,J^{(n)}) \arrow{rr}{(\psi,l)_{n,\infty}} \arrow[swap]{dr}{(\phi_{n,m},h_{n,m})} & & (B,G,I) \\
    & (A_m,F_m,J^{(m)}) \arrow[swap]{ur}{(\psi,l)_{m,\infty}}\\[-4.6em]
\end{tikzcd}
\end{equation} commutes for any $m>n\geq 1$. We claim that there exists a unique cocycle morphism $(\Phi,r):(A,F,J)\to (B,G,I)$ such that for any $n\geq 1$, $$(\Phi,r)\circ (\phi,h)_{n,\infty}= (\psi,l)_{n,\infty}.$$

For any $X\in\cC$, the union $\bigcup_{k\geq 1}h_{k,\infty}^X(F_k(X))$ is dense in $F(X)$. Since $l_{k,\infty}^X$ is contractive, \eqref{eq: UnivProp} yields that $\Ker\ h_{k,\infty}^X\subseteq \Ker\ l_{k,\infty}^X$ for any $X\in\cC$ and any $k\in\mathbb{N}$. Therefore, for any $x_k\in F_k(X)$, define $r^X(h_{k,\infty}^X(x_k))=l_{k,\infty}^X(x_k)$. As $l_{k,\infty}^X$ is contractive, we may extend $r^X$ to a well-defined linear map $r^X:F(X)\to G(X)$.\ The fact that $r^X$ satisfies the conditions appearing in Lemma \ref{linearmapspicture} is routine and essentially follows from the fact that for each $k\geq 1$, $(\psi_{k,\infty},l_{k,\infty})$ and $(\phi_{k,\infty},h_{k,\infty})$ are cocycle morphisms. Therefore, $(\Phi,r)$ is the unique cocycle morphism such that $(\Phi,r)\circ (\phi,h)_{n,\infty}= (\psi,l)_{n,\infty}$.\ Hence, the triple $(A,F,J)$ is the inductive limit of the system in \eqref{inductivelimitconnectingmaps}. 
\end{proof}

\begin{rmk}\label{rmk: ExtMor}
Note that if the connecting maps $\phi_n:A_n\to A_{n+1}$ of the inductive system are extendible, then the inductive limit connecting map $\phi_{k,\infty}:A_k\to A$ is extendible for any $k\geq 1$.
\end{rmk}

\section{Approximate unitary equivalence}

Let $\mathcal{C}$ be a semisimple $\C$-tensor category with countably many isomorphism classes of simple objects.
To define approximate unitary equivalence, we first introduce a topology on the space of cocycle representations between two actions $\mathcal{C}\overset{F}{\curvearrowright} A$ and $\mathcal{C}\overset{G}{\curvearrowright} B$. For this we employ the same approach as in \cite[Section 2.1]{cocyclecategszabo}; introducing a family of pseudometrics which measure the distance between cocycle representations.

\begin{rmk}
Recall that for any fixed $X\in\cC$, $\mathbbm{v}_X:F(X)\boxtimes {}_{\phi}B\to {}_{\phi}B\boxtimes G(X)$ and $\mathbbm{w}_X:F(X)\boxtimes {}_{\psi}B\to {}_{\psi}B\boxtimes G(X)$. By the construction of the tensor product, both $F(X)\boxtimes {}_{\phi}B$ and $F(X)\boxtimes {}_{\psi}B$ are completions of quotients of the algebraic tensor product $F(X)\odot B$. To compare the difference between the maps, it suffices to do so on their respective images of elementary tensors $x\odot b$ in the right Hilbert $B$-module $B\boxtimes G(X)$. 
\end{rmk}

\begin{defn}\label{topologycocrep}
For any finite set $K\subset \Irr(\cC)$ containing $1_{\cC}$ and any compact sets $\mathcal{F}^B\subset B$, and $\mathcal{F}^X\subset F(X)$ for any $X\in K$, denote $\mathcal{F}= \mathcal{F}^B\times\left(\coprod_{X\in K} \mathcal{F}^X\right)$. Then, we define the pseudometric 

\begin{align*}
d_{\mathcal{F}}\big((\phi,\mathbbm{v}), (\psi,\mathbbm{w})\big)&= \max\limits_{(b,\xi_X)\in\mathcal{F}}\|\mathbbm{v}_X(\xi_X\boxtimes_{\phi} b)-\mathbbm{w}_X(\xi_X\boxtimes_{\psi} b)\|,
\end{align*} where the norm is induced by the right inner product on the right Hilbert $B$-module $B\boxtimes G(X)$.
\end{defn}

\begin{rmk}\label{rmk: IsomRecoverMorphTop}
Note that $F(1_{\cC})=A$ and for any $a\in A$ and $b\in B$ $\mathbbm{v}_{1_{\cC}}(a\boxtimes_{\phi}b)=\phi(a)b$ and $\mathbbm{w}_{1_{\cC}}(a\boxtimes_{\psi}b)=\psi(a)b$. 
\end{rmk}

We will only use the topology generated by the family of pseudometrics $d_\mathcal{F}$ in the setting of cocycle morphisms, so let us derive the relevant subset topology.

\begin{lemma}\label{lemma: topologycocyclemor}
Let $(\phi_{\lambda},\mathbbm{v}_{\lambda}), (\phi,\mathbbm{v}):(A,F,J)\to (B,G,I)$ be cocycle morphisms with associated families of linear maps $\{h_{\lambda}^X\}$ and $\{h^X\}$. Then $(\phi_{\lambda},\mathbbm{v}_{\lambda})$ converges to $(\phi,\mathbbm{v})$ if and only if for any $X\in\Irr(\cC)$, $h_{\lambda}^X$ converges pointwise to $h^X$in the norm induced by the right inner product.
\end{lemma}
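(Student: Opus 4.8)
The plan is to rewrite the pseudometrics $d_{\mathcal{F}}$ of Definition \ref{topologycocrep} purely in terms of the associated linear maps, reducing the statement to an elementary comparison of the families $\{h_\lambda^X\}$ and $\{h^X\}$; by Remark \ref{rmk: IrredDetLinearMaps} and semisimplicity it suffices to work with $X\in\Irr(\cC)$ throughout. Recall from \eqref{hcomp} in the proof of Lemma \ref{linearmapspicture} the canonical map $f\colon {}_{\phi}B\boxtimes G(X)\to G(X)$, $f(b\boxtimes\xi)=b\rhd\xi$; since $G(X)$ is non-degenerate this is precisely the inverse of the unitary $L_{G(X)}$ from the discussion preceding Lemma \ref{lemma:ActionMultiplierAlg}, so $f$ is a right-$B$-module unitary. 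Crucially, both the underlying right Hilbert $B$-module $B\boxtimes G(X)$ (in which Definition \ref{topologycocrep} measures norms) and the map $f$ depend only on the right $B$-action, not on the left action $\phi$, so they are common to $(\phi,\mathbbm{v})$ and $(\phi_\lambda,\mathbbm{v}_\lambda)$.

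First I would establish the identity $f\big(\mathbbm{v}_X(x\boxtimes_{\phi}b)\big)=h^X(x)\lhd b$ for $x\in F(X)$ and $b\in B$. Writing $\iota(x)=\lim_\nu x\boxtimes\eta_\nu$ for an approximate unit $\eta_\nu$ of $B$, one has $\iota(x)\lhd b=x\boxtimes_\phi b$, so since $\mathbbm{v}_X$ is a right-module map the identity follows from $h^X(x)=f(\mathbbm{v}_X(\iota(x)))$, which is exactly \eqref{hcomp}. Applying this to both cocycle morphisms and using that $f$ is isometric yields the key reduction
\[
d_{\mathcal{F}}\big((\phi_\lambda,\mathbbm{v}_\lambda),(\phi,\mathbbm{v})\big)=\max_{(b,X,x)\in\mathcal{F}}\big\|\big(h_\lambda^X(x)-h^X(x)\big)\lhd b\big\|,
\]
where the right-hand norm is taken in $G(X)$.

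With this identity, the implication from pointwise convergence of the $h^X_\lambda$ to convergence in the topology is the easy direction. Indeed $\|(h_\lambda^X(x)-h^X(x))\lhd b\|\le\|b\|\,\|h_\lambda^X(x)-h^X(x)\|$, and condition \ref{item:linearmap3} of Lemma \ref{linearmapspicture} gives $\|h^X(x)\|^2=\|\phi(\langle x,x\rangle_A)\|\le\|x\|^2$, so every $h^X$ and $h_\lambda^X$ is contractive and the family $\{h_\lambda^X-h^X\}$ is equicontinuous. Hence pointwise norm convergence upgrades to uniform convergence on the compact sets $\mathcal{F}^X$; taking the maximum over the finite set $K$ and bounding $\|b\|$ over the compact set $\mathcal{F}^B$ then shows $d_{\mathcal{F}}\to 0$.

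Conversely, $d_{\mathcal{F}}\to 0$ gives $\|(h_\lambda^X(x)-h^X(x))\lhd b\|\to 0$ for each fixed $X$, $x$ and $b$, and the remaining task is to recover $\|h_\lambda^X(x)-h^X(x)\|\to 0$. The natural route is to insert an approximate unit $\eta_\nu$ of $B$ and use $\|h_\lambda^X(x)-h^X(x)\|=\lim_\nu\|(h_\lambda^X(x)-h^X(x))\lhd\eta_\nu\|$, valid since $G(X)$ is non-degenerate. I expect this to be the main obstacle: it requires interchanging the limit over $\nu$ with the limit over the net $\lambda$, and for fixed $\nu$ one only controls the inner quantity in the limit over $\lambda$. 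The interchange must be justified using the uniformity of convergence over the compact set $\mathcal{F}^B$, together with the inner-product identity $\langle h_\lambda^X(x),h_\lambda^X(x)\rangle_B=\phi_\lambda(\langle x,x\rangle_A)$ from \ref{item:linearmap3} and the control of $\phi_\lambda$ against $\phi$ furnished by the $1_\cC$-component of the pseudometric (cf.\ Remark \ref{rmk: IsomRecoverMorphTop}), which pins down the ``tails'' of the vectors $h_\lambda^X(x)$ and prevents them from escaping as $\lambda$ grows.
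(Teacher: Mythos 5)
Your reduction of the pseudometric is correct and is in substance the same mechanism the paper uses: the identity $f\big(\mathbbm{v}_X(x\boxtimes_{\phi}b)\big)=h^X(x)\lhd b$ does follow from \eqref{hcomp} because $\iota(x)\lhd b=x\boxtimes_{\phi}b$ and $f$, $\mathbbm{v}_X$ are right $B$-module maps; $f=L_{G(X)}^{-1}$ is indeed a right-$B$-module unitary depending only on the right structure, so
\[
d_{\mathcal F}\big((\phi_\lambda,\mathbbm{v}_\lambda),(\phi,\mathbbm{v})\big)=\max_{(b,X,x)\in\mathcal F}\big\|\big(h_\lambda^X(x)-h^X(x)\big)\lhd b\big\|.
\]
Your forward implication (contractivity of the $h^X$ via condition \ref{item:linearmap3} of Lemma \ref{linearmapspicture}, hence equicontinuity and uniformity over the compact sets $\mathcal F^X$, $\mathcal F^B$) is complete, and is if anything spelled out more carefully than the corresponding "it follows" in the paper.

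The converse direction, however, contains a genuine gap which you have named but not closed. What is needed is: if $y_\lambda:=h_\lambda^X(x)-h^X(x)$ satisfies $\|y_\lambda\lhd b\|\to 0$ uniformly for $b$ in compact subsets of $B$, then $\|y_\lambda\|\to 0$. This implication fails for an arbitrary net in a Hilbert $B$-module: take $B=G(X)=c_0(\mathbb{N})$ and $y_\lambda=e_\lambda$ the $\lambda$-th coordinate projection; then $\sup_{b\in\mathcal F^B}\|e_\lambda b\|\to 0$ for every compact $\mathcal F^B$ (compact subsets of $c_0$ are equi-vanishing at infinity), yet $\|e_\lambda\|\equiv 1$. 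So uniformity over compacta alone cannot carry the argument, and the repair you sketch is not obviously available either: the $1_\cC$-component of the pseudometric only controls $(\phi_\lambda(a)-\phi(a))b$, i.e.\ a strict-type convergence of $\phi_\lambda$ to $\phi$ (cf.\ Remark \ref{rmk: IsomRecoverMorphTop}), which again does not pin down $\|\phi_\lambda(a)-\phi(a)\|$ itself. Any correct argument must exploit the specific structure of $y_\lambda$ (for instance the inner-product identities of Lemma \ref{linearmapspicture} \ref{item:linearmap3} and a factorisation $x=x'\lhd a$), and you do not carry this out. For comparison, the paper closes this step by writing $h_\lambda^X=f\circ(\mathbbm{v}_\lambda)_X\circ\iota$ with $\iota(x)=\lim_\nu x\boxtimes\eta_\nu$ and deducing Cauchyness of $h_\lambda^X(x)$ from point-norm Cauchyness of $(\mathbbm{v}_\lambda)_X$ at the point $\iota(x)$ together with continuity of $f$; in your formulation this is exactly the assertion that the limit over the approximate unit commutes with the limit over $\lambda$ — the very interchange you flag. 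As it stands, your proof of the "only if" implication is incomplete, and you should either justify that interchange explicitly or follow the paper's factorisation through $\iota$.
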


\begin{proof}
Suppose that for any $X\in\Irr(\cC)$, $h_{\lambda}^X$ converges pointwise to  $h^X$ in the norm induced by the right inner product. Let $\zeta_\mu$ be an approximate unit of $A$ and recall from \eqref{eq: fromHtoV} that for any $X\in\cC$, $x\in F(X)$, and $b\in B$ $$\mathbbm{v}_X(x\boxtimes_{\phi} b)=\lim\limits_{\mu}\phi(\zeta_\mu)\boxtimes h^X(x)\lhd b$$ and $$(\mathbbm{v}_{\lambda})_X(x\boxtimes_{\phi_{\lambda}} b)=\lim\limits_{\mu}\phi_{\lambda}(\zeta_\mu)\boxtimes h_{\lambda}^X(x)\lhd b.$$ Fix $X\in\cC$, $x\in F(X)$, and $b\in B$ and recall that since $F(X)$ is non-degenerate, there exists $a\in A$ and $y\in F(X)$ such that $a\rhd y=x$. In particular, \ref{item:linearmap1} of Lemma \ref{linearmapspicture} yields that $h^X(x)=\phi(a)\rhd h^X(y)$ and $h^X_\lambda(x)=\phi_\lambda(a)\rhd h^X_\lambda(y)$. Therefore, by the definition of the tensor product, one has that
\begin{equation}\label{eq: Top1}
\mathbbm{v}_X(x\boxtimes_{\phi} b)=\lim\limits_{\mu}\phi(\zeta_\mu)\lhd \phi(a)\boxtimes h^X(y)\lhd b= \phi(a)\boxtimes h^X(y)\lhd b.
\end{equation}
Similarly, 
\begin{equation}\label{eq: Top2}
(\mathbbm{v}_{\lambda})_X(x\boxtimes_{\phi_{\lambda}} b)=\phi_\lambda(a)\boxtimes h^X_\lambda(y)\lhd b.
\end{equation}

Moreover, by \ref{item:linearmap5} of Lemma \ref{linearmapspicture}  $h^{1_{\cC}}(a)=\phi(a)$ and $h^{1_{\cC}}_\lambda(a)=\phi_\lambda(a)$ for any $a\in A$. By \eqref{eq: Top1} and \eqref{eq: Top2}, it follows that for any $X\in \Irr(\cC)$, $(\mathbbm{v}_{\lambda})_X$ converges pointwise to $\mathbbm{v}_X$. Therefore, $(\phi_{\lambda},\mathbbm{v}_{\lambda})$ converges to $(\phi,\mathbbm{v})$ by definition of the pseudometrics in Definition \ref{topologycocrep}.

Conversely, suppose that $(\phi_{\lambda},\mathbbm{v}_{\lambda})$ converges to $(\phi,\mathbbm{v})$. Then $(\phi_{\lambda},\mathbbm{v}_{\lambda})$ satisfies the Cauchy criterion with respect to every pseudometric $d_{\mathcal{F}}$ above. Recall from \eqref{hcomp} that $h_{\lambda}^X(x)=f((\mathbbm{v}_{\lambda})_X(\iota(x))$ for any $X\in\cC$ and $x\in F(X)$, where $f$ and $\iota$ are defined in \eqref{hcomp}. Since $(\mathbbm{v}_{\lambda})_X$ is Cauchy in the point-norm topology and $f$ is continuous, it follows that $h_{\lambda}^X$ is Cauchy for any $X\in\Irr(\cC)$. But the norm induced by the right inner product is complete, so $h_{\lambda}^X$ converges pointwise to $h^X$ for any $X\in\Irr(\cC)$.
\end{proof}

\begin{example}\label{exmp: DiffTop}
Let $\Gamma$ be a countable discrete group. Let $(\alpha,\mathfrak{u}): \Gamma\curvearrowright A$ and $(\beta,\mathfrak{v}): \Gamma\curvearrowright B$ be two twisted actions on $\C$-algebras and $(\phi,\mathbbm{v}')$, $(\psi,\mathbbm{w}'):(A,\alpha,\mathfrak{u})\to (B,\beta,\mathfrak{v})$ be two extendible cocycle morphisms as in Definition \ref{def: cocyclemorGroups}. Recall from Example \ref{example: intertHilb(G)} that $(\phi,\mathbbm{v}')$ and $(\psi,\mathbbm{w}')$ induce cocycle morphisms $(\phi,\mathbbm{v}),(\psi,\mathbbm{w}):(A,\alpha,\mathfrak{u})\to(B,\beta,\mathfrak{v})$ in the sense of Definition \ref{cocyclemorphism}, where we view $(\alpha,\mathfrak{u})$ and $(\beta,\mathfrak{v})$ as $\Hilb(\Gamma)$-actions. So,
\begin{equation}\label{grouptop}
\mathbbm{v}_g([a\boxtimes b]_{\phi})=\lim\limits_{\mu}\eta_\mu\boxtimes \mathbbm{v}_g'\phi(a)b, \quad \mathbbm{w}_g([a\boxtimes b]_{\psi})=\lim\limits_{\mu}\eta_\mu\boxtimes \mathbbm{w}_g'\psi(a)b,
\end{equation} for all $a\in A$, $b\in B$, $g\in \Gamma$, and $\eta_\mu$ an approximate unit for $B$. In this case, the topology is induced by the pseudometrics 
\begin{equation}\label{eq: TopGroup1}
d_{\mathcal{F}}\big((\phi,\mathbbm{v}), (\psi,\mathbbm{w})\big)= \max\limits_{(b,a_g)\in \mathcal{F}}\|\mathbbm{v}_g'\phi(a_g)b-\mathbbm{w}_g'\psi(a_g)b\|,
\end{equation} where $\mathcal{F}=\mathcal{F}^B\times\left(\coprod_{g\in K}\mathcal{F}^A\right)$, for compact sets $\mathcal{F}^A\subset A, \mathcal{F}^B\subset B$, and finite $K\subset \Gamma$ containing $1_\Gamma$.
Equivalently, by taking an approximate unit for $B$, the topology is generated by the pseudometrics
\begin{equation}\label{eq: TopGroup2}
d_{\mathcal{F}^A\times K}\big((\phi,\mathbbm{v}), (\psi,\mathbbm{w})\big)= \max\limits_{a_g\in \mathcal{F}}\|\mathbbm{v}_g'\phi(a_g)-\mathbbm{w}_g'\psi(a_g)\|.
\end{equation}

In \cite[Definition 2.5]{cocyclecategszabo}, Szabó defines a topology on the space of cocycle morphisms which is generated by the family of pseudometrics
\begin{equation}\label{eq: GaborTop}
d_{\mathcal{F}}\big((\phi,\mathbbm{v}'), (\psi,\mathbbm{w}')\big)=\max\limits_{a\in\mathcal{F}^A}\|\phi(a)-\psi(a)\|+ \max\limits_{g\in K}\max\limits_{b\in \mathcal{F}^B}\|b((\mathbbm{v}_g')^*-(\mathbbm{w}_g')^*)\|.  
\end{equation} The convergence with respect to the family of pseudometrics in \eqref{eq: GaborTop} implies convergence with respect to the family of pseudometrics in \eqref{eq: TopGroup1}.\ If $\phi$ and $\psi$ are non-degenerate, then $\phi(A)B$ and $\psi(A)B$ are dense in $B$. Moreover, the case $g=1$ in \eqref{eq: TopGroup1} recovers the pointwise difference of the morphisms.\ Thus, convergence with respect to the family of pseudometrics in \eqref{eq: TopGroup1} implies convergence with respect to the family of pseudometrics in \eqref{eq: GaborTop}.\ However, these topologies are different outside the non-degenerate setting, with the topology induced by the family of  pseudometrics in \eqref{eq: TopGroup1} being coarser than the topology induced by the family of pseudometrics in \eqref{eq: GaborTop}. Finally, we would like to point out that the topology induced by \eqref{eq: TopGroup2} coincides with that used in \cite[Section 5]{nawata}.
\end{example}

We finish our discussion by noticing that the composition of cocycle morphisms is jointly continuous with respect to the topology defined above. This fact will be used in Section \ref{section: onesidedintert} in the context of asymptotic unitary equivalence.

\begin{lemma}\label{lemma: compjointlycts}
Let $(\phi,h):(A,F,J)\to (B,G,I)$ and $(\psi,l):(B,G,I)\to (C,H,K)$ be two cocycle morphisms. Then the composition map given by $$[(\phi,h),(\psi,l)]\mapsto (\psi,l)\circ (\phi,h)$$ is jointly continuous.
\end{lemma}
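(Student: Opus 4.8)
The plan is to move everything into the linear-maps picture and then invoke the characterisation of convergence supplied by Lemma \ref{lemma: topologycocyclemor}. By Lemma \ref{compositionlinearmaps}, if $(\phi,h)$ and $(\psi,l)$ carry the families $\{h^X\}$ and $\{l^X\}$, then the composite is $(\psi\circ\phi,\{l^X\circ h^X\})$, so joint continuity becomes a statement purely about the families of linear maps. I would check continuity on nets: assume $(\phi_\lambda,h_\lambda)\to(\phi,h)$ in $\C_{\cC}((A,F,J),(B,G,I))$ and $(\psi_\lambda,l_\lambda)\to(\psi,l)$ in $\C_{\cC}((B,G,I),(C,H,K))$, indexed by a common directed set. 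By Lemma \ref{lemma: topologycocyclemor} this says $h_\lambda^X\to h^X$ and $l_\lambda^X\to l^X$ pointwise in the norm induced by the right inner product, uniformly over finite subsets of $\Irr(\cC)$ containing $1_\cC$. The goal is to show $l_\lambda^X\circ h_\lambda^X\to l^X\circ h^X$ in the same sense, which by Lemma \ref{lemma: topologycocyclemor} yields convergence of the composites.

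Next I would fix a finite set $K\subseteq\Irr(\cC)$ containing $1_\cC$, an object $X\in K$, and a vector $x\in F(X)$, and split the difference with the triangle inequality:
\[
\|l_\lambda^X(h_\lambda^X(x))-l^X(h^X(x))\|\le \|l_\lambda^X(h_\lambda^X(x)-h^X(x))\|+\|l_\lambda^X(h^X(x))-l^X(h^X(x))\|.
\]
The second summand is evaluated at the fixed vector $h^X(x)\in G(X)$, which does not depend on $\lambda$, so it tends to $0$ by the pointwise convergence $l_\lambda^X\to l^X$, uniformly over $X\in K$ since $K$ is finite. For the first summand the key input is that each $l_\lambda^X$ is a contraction: condition \ref{item:linearmap3} of Lemma \ref{linearmapspicture} together with the contractivity of the $^*$-homomorphism $\psi_\lambda$ gives, for all $y\in G(X)$,
\[
\|l_\lambda^X(y)\|^2=\|\langle l_\lambda^X(y),l_\lambda^X(y)\rangle_C\|=\|\psi_\lambda(\langle y,y\rangle_B)\|\le\|\langle y,y\rangle_B\|=\|y\|^2.
\]
Hence $\|l_\lambda^X(h_\lambda^X(x)-h^X(x))\|\le\|h_\lambda^X(x)-h^X(x)\|\to 0$, again uniformly over the finite set $K$ because $h_\lambda\to h$.

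Combining the two estimates gives $l_\lambda^X(h_\lambda^X(x))\to l^X(h^X(x))$ pointwise and uniformly over $X\in K$; specialising to $X=1_\cC$ (where $h^{1_\cC}=\phi$, $l^{1_\cC}=\psi$) simultaneously recovers $\psi_\lambda\circ\phi_\lambda\to\psi\circ\phi$, so no separate argument for the $^*$-homomorphism component is needed. The one genuinely non-formal point — and the step I expect to be the main obstacle — is the first summand: because both the operator $l_\lambda^X$ and its argument $h_\lambda^X(x)$ vary with $\lambda$, mere pointwise continuity of the limit map $l^X$ is insufficient, and one must exploit the \emph{uniform} contractivity of the whole family $\{l_\lambda^X\}_\lambda$ provided by condition \ref{item:linearmap3}. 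Everything else is a finite-max bookkeeping argument that is automatic once this uniform bound is in place.
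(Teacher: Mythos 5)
Your proposal is correct and follows essentially the same route as the paper: reduce to the linear-maps picture via Lemma \ref{compositionlinearmaps}, use the characterisation of convergence in Lemma \ref{lemma: topologycocyclemor}, and establish $l_\lambda^X\circ h_\lambda^X\to l^X\circ h^X$ pointwise and uniformly over finite subsets of $\Irr(\cC)$. The paper merely asserts this last limit without proof; your triangle-inequality split together with the uniform contractivity of the $l_\lambda^X$ (from condition \ref{item:linearmap3} of Lemma \ref{linearmapspicture}) is exactly the intended justification.
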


We can now introduce a notion of approximate unitary equivalence that will be crucial to perform equivariant Elliott intertwining arguments. We start by defining unitary equivalence for cocycle morphisms and then use the topology on the space of morphisms to obtain an approximate notion of unitary equivalence.

Suppose $\cC$ is a semisimple $\C$-tensor category with countably many isomorphism classes of simple objects acting on a $\C$-algebra $B$. We denote this action by the triple $(B,G,I)$. For any $u$ unitary in $\mathcal{M}(B)$, we consider $\Ad(u):B\to B$ to be the $^*$-homomorphism given by $b\mapsto ubu^*$. Then $\Ad(u)$ induces a Hilbert $B$-bimodule ${}_{\Ad(u)}B$.\ The map $T_u:{}_{\Ad(u)}B\to B$ given by $b\mapsto u^*b$ is a bimodule isomorphism. But for any $X\in\cC$, $G(X)\boxtimes B\cong B\boxtimes G(X)$, so there exists a unitary isomorphism $(\mathbbm{v}_{u})_X: G(X)\boxtimes {}_{\Ad(u)}B\to {}_{\Ad(u)}B\boxtimes G(X)$.\ It follows that $(\Ad(u),\mathbbm{v}_{u}):(B,G,I)\to (B,G,I)$ is a cocycle morphism..\ We denote by $h_u=\{h_u^X:G(X)\to G(X)\}_{X\in\cC}$ the collection of linear maps corresponding to the cocycle morphism induced by $\Ad(u)$. 

\begin{lemma}\label{lemma: linearmapsAd(u)}
Let $\mathcal{C}\overset{F}{\curvearrowright} B$  be an action of $\mathcal{C}$ on a $\C$-algebra $B$, and let $u\in\mathcal{M}(B)$ be a unitary.\ Then $\Ad(u)$ induces a cocycle morphism $(\Ad(u),h_u):(B,G,I)\to(B,G,I)$, where $h_u^X(x)=u\rhd x\lhd u^*$ for any $X\in\cC$ and any $x\in G(X)$.\footnote{Recall that $\rhd$ denotes an action on the left, while $\lhd$ an action on the right.}
\end{lemma}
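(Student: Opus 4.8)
The plan is to verify that the family $h_u=\{h_u^X\}_{X\in\cC}$ given by $h_u^X(x)=u\rhd x\lhd u^*$ satisfies conditions \ref{item:linearmap1}--\ref{item:linearmap5} of Lemma \ref{linearmapspicture} with respect to the $^*$-homomorphism $\phi=\Ad(u):B\to B$ (which indeed lands in $B$ since $u\in\mathcal{M}(B)$ and $ubu^*\in B$). By the bijection established in Lemma \ref{linearmapspicture}, this produces a cocycle morphism $(\Ad(u),\mathbbm{v})$, and uniqueness of the associated linear maps identifies it with the morphism $(\Ad(u),\mathbbm{v}_u)$ singled out before the statement. First I would note that $h_u^X$ is well-defined and valued in $G(X)$: since $G$ is valued in $\Corr_0(B)$, each $G(X)$ is a non-degenerate Hilbert $B$-bimodule, so Lemma \ref{lemma:ActionMultiplierAlg} extends the actions to commuting $\mathcal{M}(B)$-actions, the left action being a $^*$-homomorphism into $\mathcal{L}(G(X))$ with adjoint $(v\rhd\cdot)^*=v^*\rhd\cdot$ and the right action satisfying $\langle y,x\lhd v\rangle_B=\langle y,x\rangle_B\,v$.

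With these structural facts, conditions \ref{item:linearmap1}, \ref{item:linearmap3} and \ref{item:linearmap5} are short computations. Condition \ref{item:linearmap1} follows from associativity of the $\mathcal{M}(B)$-actions together with $u^*u=1$, giving $h_u^X(a\rhd x\lhd a')=(ua)\rhd x\lhd(a'u^*)=\Ad(u)(a)\rhd h_u^X(x)\lhd\Ad(u)(a')$. For \ref{item:linearmap3} one computes, using that the unitary $u$ acts on the left by an inner-product-preserving operator and the right-action identity above,
\[
\langle u\rhd x\lhd u^*,\,u\rhd y\lhd u^*\rangle_B=\langle x\lhd u^*,\,y\lhd u^*\rangle_B=u\langle x,y\rangle_B u^*=\Ad(u)(\langle x,y\rangle_B).
\]
Condition \ref{item:linearmap5} is immediate since $G(1_{\cC})={}_{\id}B$ (Example \ref{example: homsbimodules}), on which $u\rhd b\lhd u^*=ubu^*=\Ad(u)(b)$. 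Condition \ref{item:linearmap2}, that $h_u$ intertwines $G(f)$ for morphisms $f\in\cC(X\to Y)$, I would obtain by writing $u\rhd x=\lim_{\mu}(u\eta_\mu)\rhd x$ for an approximate unit $\eta_\mu$ of $B$ (and symmetrically for $\lhd u^*$), which reduces the claim to the fact that the adjointable intertwiner $G(f)$ is a genuine $B$-bimodule map and hence commutes with these norm limits of $B$-actions.

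The hard part will be condition \ref{item:linearmap4}, compatibility with the tensor constraints $I_{X,Y}$. Here I expect to need two facts that are not purely formal: that the internal tensor product is balanced over $\mathcal{M}(B)$, i.e. $(x\lhd v)\boxtimes y=x\boxtimes(v\rhd y)$ for $v\in\mathcal{M}(B)$, and that $I_{X,Y}$, being a $B$-bimodule map, commutes with the extended $\mathcal{M}(B)$-actions. Both are proved by approximating the multiplier by $v\eta_\mu\in B$ and passing to the limit, using continuity of $\boxtimes$ and of $I_{X,Y}$; the balancing in particular uses $\eta_\mu\rhd(v\rhd y)\to v\rhd y$ by non-degeneracy. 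Granting these, one computes
\[
(h_u^X\boxtimes h_u^Y)(x\boxtimes y)=(u\rhd x\lhd u^*)\boxtimes(u\rhd y\lhd u^*)=u\rhd(x\boxtimes y)\lhd u^*,
\]
and applying $I_{X,Y}$ while pulling the $\mathcal{M}(B)$-actions through it gives $I_{X,Y}\circ(h_u^X\boxtimes h_u^Y)=h_u^{X\otimes Y}\circ I_{X,Y}$, which is exactly \ref{item:linearmap4}. Lemma \ref{linearmapspicture} then yields that $(\Ad(u),h_u)$ is a cocycle morphism with the asserted formula.
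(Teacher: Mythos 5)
Your proof is correct, but it runs the bijection of Lemma \ref{linearmapspicture} in the opposite direction from the paper. The paper starts from the bimodule intertwiners $(\mathbbm{v}_u)_X$ already introduced before the statement, and computes the associated linear maps by unwinding the explicit formula \eqref{hcomp}: it writes $h_u^X=f\circ L_X\circ R_X^{-1}\circ\iota$ using a \emph{quasicentral} approximate unit so that the identification $\iota(x)=R_X(x)\lhd u^*$ goes through, and then reads off $h_u^X(x)=L_X^{-1}(u\rhd L_X(x\lhd u^*))=u\rhd x\lhd u^*$ directly from Lemma \ref{lemma:ActionMultiplierAlg}. You instead propose the formula and verify conditions \ref{item:linearmap1}--\ref{item:linearmap5} from scratch; your key technical inputs (the extended $\mathcal{M}(B)$-bimodule structure from Lemma \ref{lemma:ActionMultiplierAlg}, the balancing of $\boxtimes$ over $\mathcal{M}(B)$, and the fact that adjointable $B$-bimodule maps such as $G(f)$ and $I_{X,Y}$ commute with the extended actions, all obtained by approximating multipliers by $v\eta_\mu\in B$) are sound, and the computations you sketch for \ref{item:linearmap1}, \ref{item:linearmap3}--\ref{item:linearmap5} are right. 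What the paper's route buys is brevity and an automatic identification of $h_u$ with the specific morphism $(\Ad(u),\mathbbm{v}_u)$ used later in Definition \ref{unitaryeq}, since the axioms come for free from Lemma \ref{linearmapspicture}; what your route buys is independence from the quasicentral approximate unit device and from the explicit form of $\mathbbm{v}_u$. The one point you should make explicit is the last step: Lemma \ref{linearmapspicture} is a bijection between families $\{\mathbbm{v}_X\}$ and families $\{h^X\}$ for the fixed $^*$-homomorphism $\Ad(u)$, so to conclude that your $h_u$ is \emph{the} family attached to the $(\mathbbm{v}_u)_X$ of the preamble (rather than to some other cocycle structure over $\Ad(u)$) you should check that $\Psi(\{u\rhd\cdot\lhd u^*\})$ agrees with the canonical identification $G(X)\boxtimes{}_{\Ad(u)}B\cong{}_{\Ad(u)}B\boxtimes G(X)$ there; this is a one-line computation with \eqref{eq: fromHtoV}, but it is where the two halves of the argument meet.
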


\begin{proof}
For any $X\in\cC$, consider the bimodule maps $L_X:G(X)\to B\boxtimes G(X)$ and $R_X:G(X)\to G(X)\boxtimes B$ given by $L_X(x)=\lim\limits_{\mu}\eta_\mu\boxtimes x$ and $R_X(x)=\lim\limits_{\mu}x\boxtimes\eta_\mu$ for some $\eta_\mu$ quasicentral approximate unit of $B$ with respect to $\mathcal{M}(B)$ (i.e. $\eta_{\mu}x-x\eta_{\mu}\to 0$ for all $x\in\mathcal{M}(B)$).

As the bimodule $G(X)$ is non-degenerate for any $X\in\cC$, it follows from Lemma \ref{lemma:ActionMultiplierAlg} that $$u\rhd x= L_X^{-1}(u\rhd L_X(x)) \quad \AND \quad x\lhd u^*=R_X^{-1}(R_X(x)\lhd u^*).$$ Then, consider the map $$\iota: G(X)\to G(X)\boxtimes B$$ given by  $x\mapsto \lim\limits_{\mu}x\boxtimes u^*\eta_\mu$. Since $\eta_\mu$ is quasicentral, $$\iota(x)=\lim\limits_{\mu}x\boxtimes\eta_\mu u^*=R_X(x)\lhd u^*.$$

Then, $L_X\circ R_X^{-1}\circ\iota: G(X)\to B\boxtimes G(X)$ is the map given by $$L_X(R_X^{-1}(\iota(x)))=L_X(x\lhd u^*).$$ Finally, we consider the map $$f:B\boxtimes G(X)\to G(X)$$ given by $f(b\boxtimes x)=L_X^{-1}(u\rhd (b\boxtimes x))$. 

Following the construction in Lemma \ref{linearmapspicture} and since the family of maps $\{h_u^X\}$ is independent of the chosen approximate unit, it follows that $$h_u^X=f\circ L_X\circ R_X^{-1}\circ\iota.$$ Therefore, for any $X\in\cC$ and any $x\in G(X)$, $$h_u^X(x)=f(L_X(x\lhd u^*))=L_X^{-1}(u\rhd L_X(x\lhd u^*))=u\rhd x\lhd u^*,$$ which finishes the proof.
\end{proof}

\begin{defn}\label{unitaryeq}
Let $\mathcal{C}\overset{F}{\curvearrowright} A$ and $\mathcal{C}\overset{G}{\curvearrowright} B$  be actions of $\mathcal{C}$ on $\C$-algebras $A$ and $B$ and let $(\phi,\mathbbm{v}),(\psi,\mathbbm{w}):(A,F,J)\to (B,G,I)$ be cocycle representations.
\begin{enumerate}
\item  We say that the pairs $(\phi,\mathbbm{v})$ and $(\psi,\mathbbm{w})$ are \emph{unitarily equivalent} if there exists a unitary $u\in\mathcal{U}(\mathcal{M}(B))$ such that $$(\Ad(u),\mathbbm{v}_{u})\circ (\phi,\mathbbm{v})= (\psi,\mathbbm{w}).\footnote{As mentioned in Remark \ref{rmk:NewComp}, the composition works since $\Ad(u)$ is a non-degenerate $^*$-homomorphism.}$$

\item  We say that $(\psi,\mathbbm{w})$ is an \emph{approximate unitary conjugate} of $(\phi,\mathbbm{v})$ if there exists a net of unitaries $u_\lambda\in\mathcal{U}(\mathcal{M}(B))$ such that $$\psi(a)=\lim\limits_{\lambda}u_\lambda\phi(a)u_\lambda^*,$$ and $$\|\mathbbm{w}_X(x\boxtimes_{\psi}b)-(\mathbbm{v}_{u_\lambda}*\mathbbm{v})_X(x\boxtimes_{\Ad(u_{\lambda})\phi}b)\|\overset{\lambda}{\longrightarrow} 0$$ for all $a\in A$, $b\in B$, $X\in\Irr(\cC)$, and any $x\in F(X)$. We denote this by $(\psi,\mathbbm{w})\precapprox_u(\phi,\mathbbm{v})$.\footnote{Note that this means precisely that $(\Ad(u_\lambda),\mathbbm{v}_{u_\lambda})\circ(\phi,\mathbbm{v})$ converges to $(\psi,\mathbbm{w})$ with respect to the topology in Definition \ref{topologycocrep}.}
\end{enumerate}
\end{defn}

The analogous notion of approximate unitary conjugacy in \cite[Definition 2.8]{cocyclecategszabo} is only a subequivalence relation in general. Indeed, symmetry fails when the cocycle representations considered are not extendible (\cite[Remark 2.9]{cocyclecategszabo}). However, the topology that we consider does indeed make it an equivalence relation.
\begin{lemma}\label{lem: conjisequivalence}
    Let $(\phi,\bv),(\psi,\bw):(A,F,J)\rightarrow (B,G,I)$ be cocycle representations between $\cC$-C$^*$-algebras. Then if $(\psi,\mathbbm{w})\precapprox_u(\phi,\mathbbm{v})$, one also has that $(\phi,\mathbbm{v})\precapprox_u(\psi,\mathbbm{w})$.
\end{lemma}
\begin{proof}
It can be computed from the definitions that for any unitary $u\in \cU(\cM(B))$, $X\in \cC$ and $x\in F(X)$
    \[
    (\bv_{u}* \bv)_X(x\boxtimes_{\Ad(u)\phi} b)=u\rhd \bv_X(x\boxtimes_{\phi} u^*b).
    \]
In particular 
\begin{align*}
&\|\mathbbm{w}_X(x\boxtimes_{\psi}b)-(\mathbbm{v}_{u_\lambda}*\mathbbm{v})_X(x\boxtimes_{\Ad(u_{\lambda})\phi}b)\|\\
&=\lVert \mathbbm{w}_X(x\boxtimes_{\psi}b)-u_{\lambda}\rhd\bv_{X}(x\boxtimes_{\phi}u_{\lambda}^*b)\rVert\\
&=\lVert u_{\lambda}^*\rhd \bw_{X}(x\boxtimes_{\psi} b)-\bv_{X}(x\boxtimes_{\phi} u_{\lambda}^*b)\rVert\\
&=\lVert (\bv_{u_{\lambda}^*}*\bw)_{X}(x\boxtimes_{\Ad(u_{\lambda}^*)\psi} u_{\lambda}^*b)-\bv_X(x\boxtimes_{\phi} u_{\lambda}^*b)\rVert.
\end{align*}
Thus the result follows.
\end{proof}
In light of Lemma \ref{lem: conjisequivalence}, we simply call two cocycle representations \[(\psi,\bw),(\phi,\bv):(A,F,J)\rightarrow (B,G,I)\] such that $(\psi,\bw)\precapprox_u (\phi,\bv)$ \emph{approximately unitarily equivalent} and we denote it rather by $(\phi,\bv)\approx_u (\psi,\bw)$.
In general, we will be interested in these notions when $(\phi,\mathbbm{v})$ and $(\psi,\mathbbm{w})$ are cocycle morphisms, so let us record the following immediate consequence of Lemma \ref{lemma: linearmapsAd(u)}.

\begin{lemma}\label{lemma: approxunitconj}
If $(\phi,\mathbbm{v}),(\psi,\mathbbm{w}):(A,F,J)\to (B,G,I)$ are cocycle morphisms with associated families of linear maps $\{h^X\}$ and $\{l^X\}$ respectively, then $(\psi,\mathbbm{w})$ is approximately unitarily equivalent to $(\phi,\mathbbm{v})$ if and only if there exists a net of unitaries $u_\lambda\in\mathcal{U}(\mathcal{M}(B))$ such that $$\|l^X(x)-u_\lambda\rhd h^X(x)\lhd u_\lambda^*\| \overset{\lambda}{\longrightarrow} 0$$ for any $X\in \Irr(\cC)$, and any $x\in F(X)$. 
\end{lemma}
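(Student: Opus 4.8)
The plan is to assemble the statement directly from three results established earlier in the excerpt: the computation of the composition of cocycle morphisms in the linear-maps picture (Lemma \ref{compositionlinearmaps}), the explicit identification of the linear maps attached to an inner automorphism (Lemma \ref{lemma: linearmapsAd(u)}), and the translation of the topology of Definition \ref{topologycocrep} into pointwise convergence of the associated families of linear maps (Lemma \ref{lemma: topologycocyclemor}). No genuinely new estimate is required; the content is bookkeeping about which family of linear maps is attached to the relevant composite.

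First I would recall, as recorded in the footnote to Definition \ref{unitaryeq}(2), that $(\psi,\mathbbm{w})$ being an approximate unitary conjugate of $(\phi,\mathbbm{v})$ means precisely that the net $(\Ad(u_\lambda),\mathbbm{v}_{u_\lambda})\circ(\phi,\mathbbm{v})$ converges to $(\psi,\mathbbm{w})$ in the topology of Definition \ref{topologycocrep}. So the task reduces to computing the family of linear maps attached to this composite and rephrasing its convergence. Since $\Ad(u_\lambda)$ is a non-degenerate $^*$-homomorphism, this composition agrees with the composition in $\C_{\cC}$, so by Lemma \ref{compositionlinearmaps} the composite $(\Ad(u_\lambda)\circ\phi, \, h_{u_\lambda}\circ h)$ has associated family of linear maps $\{h_{u_\lambda}^X\circ h^X\}_{X\in\cC}$, where $\{h_{u_\lambda}^X\}$ is the family attached to $(\Ad(u_\lambda),\mathbbm{v}_{u_\lambda})$ and $\{h^X\}$ is attached to $(\phi,\mathbbm{v})$.

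Next, Lemma \ref{lemma: linearmapsAd(u)} identifies $h_{u_\lambda}^X(y)=u_\lambda\rhd y\lhd u_\lambda^*$ for every $y\in G(X)$, whence
$$(h_{u_\lambda}^X\circ h^X)(x)=u_\lambda\rhd h^X(x)\lhd u_\lambda^*, \qquad x\in F(X).$$
Applying Lemma \ref{lemma: topologycocyclemor} to the net $(\Ad(u_\lambda),\mathbbm{v}_{u_\lambda})\circ(\phi,\mathbbm{v})$ with limit $(\psi,\mathbbm{w})$, whose associated family of linear maps is $\{l^X\}$, convergence in the topology of Definition \ref{topologycocrep} is equivalent to $h_{u_\lambda}^X\circ h^X\to l^X$ pointwise in the norm induced by the right inner product and uniformly over finite subsets of $\Irr(\cC)$ containing $1_{\cC}$. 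Substituting the formula above yields exactly
$$\max_{X\in K}\bigl\|l^X(x)-u_\lambda\rhd h^X(x)\lhd u_\lambda^*\bigr\| \overset{\lambda}{\longrightarrow} 0$$
for every finite $K\subseteq\Irr(\cC)$ containing $1_{\cC}$ and every $x\in F(X)$, which is the asserted equivalence.

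The one point deserving a remark, rather than a genuine obstacle, is the consistency of the quantifiers between Definition \ref{unitaryeq}(2) and the statement here: Definition \ref{unitaryeq}(2) imposes the convergence $\psi(a)=\lim_\lambda u_\lambda\phi(a)u_\lambda^*$ separately, over finite $K\subset\Irr(\cC)$ not necessarily containing $1_{\cC}$, whereas the present formulation bundles it into the requirement that $K$ contain $1_{\cC}$. These agree because the $X=1_{\cC}$ component of the displayed maximum is, using Lemma \ref{linearmapspicture}\ref{item:linearmap5}, precisely $\|\psi(a)-u_\lambda\phi(a)u_\lambda^*\|$ (as $l^{1_{\cC}}=\psi$, $h^{1_{\cC}}=\phi$, and $u_\lambda\rhd\phi(a)\lhd u_\lambda^*=u_\lambda\phi(a)u_\lambda^*$), so insisting on $1_{\cC}\in K$ exactly reincorporates the strict-convergence condition. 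Once this is noted, the proof is the three-step chain above.
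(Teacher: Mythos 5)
Your proposal is correct and follows essentially the same route as the paper's proof: unwind the definition of approximate unitary conjugacy as convergence of $(\Ad(u_\lambda),\mathbbm{v}_{u_\lambda})\circ(\phi,\mathbbm{v})$ to $(\psi,\mathbbm{w})$, apply Lemma \ref{lemma: topologycocyclemor} to translate this into pointwise convergence of the associated linear maps, and use Lemma \ref{lemma: linearmapsAd(u)} to compute $h_{u_\lambda}^X\circ h^X(x)=u_\lambda\rhd h^X(x)\lhd u_\lambda^*$. Your explicit appeal to Lemma \ref{compositionlinearmaps} and the closing remark on the $X=1_{\cC}$ component are harmless additions that the paper leaves implicit.
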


We finish this section by defining asymptotic unitary equivalence for cocycle representations.

\begin{defn}\label{def: AsymptUnitaryEq}
Let $\mathcal{C}\overset{F}{\curvearrowright} A$ and $\mathcal{C}\overset{G}{\curvearrowright} B$  be actions of $\mathcal{C}$ on $\C$-algebras $A$ and $B$ respectively and let $(\phi,\mathbbm{v}),(\psi,\mathbbm{w}):(A,F,J)\to (B,G,I)$ be cocycle representations. We say that $(\psi,\mathbbm{w})$ is \emph{asymptotically unitarily equivalent} to $(\phi,\mathbbm{v})$ if there exists a strictly continuous map $u: [0,\infty) \to \mathcal{U}(\mathcal{M}(B))$ such that $$\psi(a)=\lim\limits_{t\to\infty}u_t\phi(a)u_t^*,$$ and $$\|\mathbbm{w}_X(x\boxtimes_{\psi}b)-(\mathbbm{v}_{u_t}*\mathbbm{v})_X(x\boxtimes_{\Ad(u_t)\phi}b)\|\overset{t\to\infty}{\longrightarrow} 0$$ for all $a\in A$, $b\in B$, $X\in\Irr(\cC)$, and any $x\in F(X)$. This will be denoted by $(\phi,\mathbbm{v})\cong_u(\psi,\mathbbm{w})$.
\end{defn}
The same argument as in Lemma \ref{lem: conjisequivalence} shows that asymptotic unitary equivalence is an equivalence relation.
Moreover, the same argument as in Lemma \ref{lemma: approxunitconj} gives the following equivalent characterisation for asymptotic unitary equivalence of cocycle morphisms.

\begin{lemma}\label{lemma: asymptunitconj}
If $(\phi,\mathbbm{v}),(\psi,\mathbbm{w}):(A,F,J)\to (B,G,I)$ are cocycle morphisms with associated families of linear maps $\{h^X\}_{X\in\cC}$ and $\{l^X\}_{X\in\cC}$ respectively, then $(\psi,\mathbbm{w})$ is \emph{asymptotically unitarily equivalent} to $(\phi,\mathbbm{v})$ if and only if there exists a strictly continuous map $u:[0,\infty)\to\mathcal{U}(\mathcal{M}(B))$ such that  $$\|l^X(x)-u_t\rhd h^X(x)\lhd u_t^*\| \overset{t\to\infty}{\longrightarrow} 0$$ for any $X\in \Irr(\cC)$, and any $x\in F(X)$.
\end{lemma}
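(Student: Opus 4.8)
The plan is to repeat the proof of Lemma \ref{lemma: approxunitconj} essentially verbatim, with the net $(u_\lambda)$ replaced by the strictly continuous path $(u_t)_{t\in[0,\infty)}$, viewing $[0,\infty)$ as a directed set under its usual order. First I would unfold Definition \ref{def: AsymptUnitaryEq}: by the footnote there, $(\psi,\mathbbm{w})$ is asymptotically unitarily equivalent to $(\phi,\mathbbm{v})$ precisely when there is a strictly continuous map $u:[0,\infty)\to\mathcal{U}(\mathcal{M}(B))$ for which the family of cocycle morphisms $(\Ad(u_t),\mathbbm{v}_{u_t})\circ(\phi,\mathbbm{v})$ converges, as $t\to\infty$, to $(\psi,\mathbbm{w})$ in the topology of Definition \ref{topologycocrep}. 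Here the composition is legitimate because $\Ad(u_t)$ is non-degenerate (cf.\ Remark \ref{rmk:NewComp}), and each $(\Ad(u_t),\mathbbm{v}_{u_t})$ is a genuine cocycle morphism by Lemma \ref{lemma: linearmapsAd(u)}.

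The second step is to translate this convergence into the language of the associated linear maps. Writing $\{h_{u_t}^X\}$ for the family of linear maps attached to $(\Ad(u_t),\mathbbm{v}_{u_t})$, Lemma \ref{lemma: topologycocyclemor} shows that the above convergence holds if and only if $h_{u_t}^X\circ h^X\to l^X$ pointwise in the norm induced by the right inner product, uniformly over finite subsets of $\Irr(\cC)$ containing $1_{\cC}$. Finally, Lemma \ref{lemma: linearmapsAd(u)} identifies $h_{u_t}^X(h^X(x))=u_t\rhd h^X(x)\lhd u_t^*$ for every $X\in\cC$ and $x\in F(X)$, so the convergence condition becomes exactly
\[
\max_{X\in K}\big\|l^X(x)-u_t\rhd h^X(x)\lhd u_t^*\big\|\overset{t\to\infty}{\longrightarrow}0,
\]
as claimed.

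The only point needing attention is that Lemma \ref{lemma: topologycocyclemor} is phrased for nets, whereas here the index $t$ ranges over $[0,\infty)$; but since the topology is generated by the pseudometrics of Definition \ref{topologycocrep} and $[0,\infty)$ is a directed set, convergence as $t\to\infty$ is convergence of the corresponding net, and the cited lemma applies without change. Thus no genuine obstacle arises beyond bookkeeping: the strict continuity of $t\mapsto u_t$ is used only to guarantee that $t\mapsto(\Ad(u_t),\mathbbm{v}_{u_t})$ is a well-defined path of cocycle morphisms, and otherwise the equivalence is formally identical to the one established in Lemma \ref{lemma: approxunitconj}.
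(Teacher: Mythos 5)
Your proposal is correct and matches the paper exactly: the paper itself dispenses with a separate proof, stating only that ``the same argument as in Lemma \ref{lemma: approxunitconj}'' applies, which is precisely the substitution of the strictly continuous path $(u_t)_{t\in[0,\infty)}$ for the net $(u_\lambda)$ followed by the same appeals to Definition \ref{def: AsymptUnitaryEq}, Lemma \ref{lemma: topologycocyclemor}, and Lemma \ref{lemma: linearmapsAd(u)}. Your closing remark about $[0,\infty)$ being a directed set is the right (and only) bookkeeping point to flag.
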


\begin{rmk}\label{rmk: VerticalComp}
Let $(\phi_1,h_1):(A,F,J)\to (B,G,I), (\psi_1,l_1):(B,G,I)\to (C,H,K)$ be two cocycle morphisms. Suppose that $(\phi_1,h_1)$ is unitarily equivalent to $(\phi_2,h_2)$ and $(\psi_1,l_1)$ is unitarily equivalent to $(\psi_2,l_2)$. If $\psi_1$ is extendible, then $(\psi_1\circ\phi_1,l_1\circ h_1)$ is unitarily equaivalent to $(\psi_2\circ\phi_2,l_2\circ h_2)$. Indeed, let $u\in \mathcal{U}(\mathcal{M}(B))$ and $v\in \mathcal{U}(\mathcal{M}(C))$ be such that $u\rhd h_1^X(x)\lhd u^*=h_2^X(x)$ and $v\rhd l_1^X(y)\lhd v^*=l_2^X(y)$ for any $X\in\cC$, $x\in F(X)$, and $y\in G(X)$. Then, $v\psi_1^{\dagger}(u)\rhd l_1^X(h_1^X(x))\lhd (v\psi_1^{\dagger}(u))^*=l_2^X(h_2^X(x))$. In general, the vertical composition in the $2$-category of correspondences is not well defined outside of the extendible setting.
\end{rmk}

We will finish by recording the following lemma which will be used in Section \ref{section: onesidedintert}.

\begin{lemma}\label{lemma: CompConjIsConjAsympt}
Let $(\phi,h):(A,F,J)\to (B,G,I)$ and $(\psi,l):(B,G,I)\to (C,H,K)$ be two cocycle morphisms which are asymptotically unitarily equivalent to cocycle conjugacies. Then their composition $(\psi\circ\phi,l\circ h)$ is asymptotically unitarily equivalent to a cocycle conjugacy. 
\end{lemma}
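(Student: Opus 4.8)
The plan is to exhibit an explicit cocycle conjugacy together with a witnessing strictly continuous unitary path, by splitting the required conjugation into an \emph{outer} step and an \emph{inner} step. Write $(\phi',h')$ and $(\psi',l')$ for the cocycle conjugacies to which $(\phi,h)$ and $(\psi,l)$ are asymptotically unitarily equivalent, and let $u\colon[0,\infty)\to\U(\M(B))$ and $v\colon[0,\infty)\to\U(\M(C))$ be the strictly continuous paths furnished by Lemma~\ref{lemma: asymptunitconj} (using the symmetry of $\cong_u$), so that
$$\max_{X\in K}\|h'^{X}(x)-u_t\rhd h^{X}(x)\lhd u_t^*\|\overset{t\to\infty}{\longrightarrow}0 \quad\text{and}\quad \max_{X\in K}\|l'^{X}(y)-v_t\rhd l^{X}(y)\lhd v_t^*\|\overset{t\to\infty}{\longrightarrow}0$$
for all finite $K\subseteq\Irr(\cC)$ and all $x\in F(X),\,y\in G(X)$. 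By Definition~\ref{cocyclecatgeneralised} the composite $(\psi'\circ\phi',\,l'\circ h')$ is again a cocycle conjugacy, and by Lemma~\ref{compositionlinearmaps} every composite appearing below is a genuine cocycle morphism whose linear maps are the relevant compositions. I claim $(\psi'\circ\phi',\,l'\circ h')$ is the target cocycle conjugacy.

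For the outer step I would simply substitute $y=h^{X}(x)$ into the second estimate above, which gives $\max_{X\in K}\|l'^{X}(h^{X}(x))-v_t\rhd l^{X}(h^{X}(x))\lhd v_t^*\|\to 0$. Hence $v$ itself witnesses $(\psi\circ\phi,\,l\circ h)\cong_u(\psi'\circ\phi,\,l'\circ h)$.

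For the inner step I would use the key observation that $\psi'$, being the $^*$-homomorphism underlying a cocycle conjugacy, is an isomorphism and therefore non-degenerate; by Remark~\ref{rmk: nondegextend} it admits a unital extension $\psi'^{\dagger}\colon\M(B)\to\M(C)$. Setting $w_t:=\psi'^{\dagger}(u_t)$, the extended bimodule identity of Remark~\ref{rmk: ExtLinMaps} yields $w_t\rhd l'^{X}(h^{X}(x))\lhd w_t^*=l'^{X}\!\big(u_t\rhd h^{X}(x)\lhd u_t^*\big)$, and since $l'^{X}$ is contractive (condition \ref{item:linearmap3} of Lemma~\ref{linearmapspicture}, together with contractivity of $\psi'$) the right-hand side converges to $l'^{X}(h'^{X}(x))$. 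Thus $w$ witnesses $(\psi'\circ\phi,\,l'\circ h)\cong_u(\psi'\circ\phi',\,l'\circ h')$, and $w$ is strictly continuous because $\psi'^{\dagger}$ is strictly continuous on bounded sets.

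Finally I would chain the two steps: using that conjugation by a unitary of $\M(C)$ is isometric on each $H(X)$ (Lemma~\ref{lemma:ActionMultiplierAlg}, exactly as in the proof of Lemma~\ref{lemma: uniteqsym}), a triangle-inequality estimate shows that the strictly continuous path $t\mapsto\psi'^{\dagger}(u_t)\,v_t$ witnesses $(\psi\circ\phi,\,l\circ h)\cong_u(\psi'\circ\phi',\,l'\circ h')$, which completes the proof. The main obstacle is that the original morphisms $\phi,\psi$ need not be extendible, so one cannot form $\psi^{\dagger}(u_t)$ or transport the inner unitaries $u_t$ through $\psi$; the decomposition above is engineered precisely so that the inner conjugation is routed through the automatically extendible $\psi'$, where Remark~\ref{rmk: ExtLinMaps} is available.
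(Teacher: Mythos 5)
Your proof is correct and takes essentially the same route as the paper's: the paper's two-line argument simply invokes Remark~\ref{rmk: VerticalComp}, whose witnessing unitary is exactly your $\psi'^{\dagger}(u_t)v_t$ routed through the automatically extendible conjugacy, together with joint continuity of composition (Lemma~\ref{lemma: compjointlycts}). Your outer/inner decomposition and the triangle-inequality chaining via isometry of unitary conjugation are just an explicit unpacking of those two citations.
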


\begin{proof}
Suppose that $(\Phi,H)$ and $(\Psi,L)$ are cocycle conjugacies such that $$(\Phi,H)\cong_u(\phi,h) \quad \AND \quad (\Psi,L)\cong_u(\psi,l).$$ By Lemma \ref{lemma: compjointlycts}, composition is jointly continuous. Moreover, since $\Psi$ is extendible, $(\Psi\circ\Phi,L\circ H)\cong_u(\psi\circ\phi,l\circ h)$ by Remark \ref{rmk: VerticalComp}.
\end{proof}

\section{Two-sided Elliott intertwining}\label{sect: TwoSidedInt}

With our setup we may now use Elliott's abstract framework from \cite{AbstractElliott} to show Theorem \ref{thm: IntroThmIntert}.
\begin{proof}
We check the conditions of the second theorem in \cite{AbstractElliott}.\ Our underlying category is the subcategory of $\C_{\cC}$ of separable C$^*$-algebras, together with extendible cocycle morphisms (see Definition \ref{cocyclecatgeneralised}). In this category, Lemma \ref{lemma: linearmapsAd(u)} gives us a notion of inner automorphisms in the sense of \cite{AbstractElliott}. Moreover, the topology of the morphism spaces in this category, as in Lemma \ref{lemma: topologycocyclemor}, is induced by a complete metric. Indeed, let $K_n$ be an increasing sequence of finite sets containing $1_{\cC}$ such that $\bigcup_{n\in \N} K_n=\Irr(\mathcal{C})$. For any $X\in\Irr(\cC)$, choose a sequence of contractions $\mu_n^X$ which is dense in the unit ball of $F(X)$. Then, the assignment
\begin{equation*}
    \left((\phi,h),(\psi,l)\right)\mapsto\sum_{n=1}^{\infty}2^{-n}\max_{X\in K_n}\max_{m\leq n}\|h^X(\mu_m^X)-l^X(\mu_m^X)\|.
\end{equation*}
yields a complete metric recovering the topology on the space of morphisms.\ Furthermore, composition with an inner automorphism is isometric. Since the composition of cocycle morphisms is jointly continuous by Lemma \ref{lemma: compjointlycts}, all the conditions of the second theorem in \cite{AbstractElliott} are satisfied. Hence, the conclusion follows.
\end{proof}
However, we also decide to give the technical argument in full, picking up some more general results in the process. We start by introducing the setup for performing approximate intertwining arguments in the spirit of {\cite[Definition 3.3]{cocyclecategszabo}}.

\begin{defn}\label{def: approx-int}
Let $\cC$ be a semisimple $\C$-tensor category with countably many isomorphism classes of simple objects. Let $(F_n,J^{(n)}): \cC\curvearrowright A_n$ and $(G_n,I^{(n)}): \cC\curvearrowright B_n$ be sequences of actions on separable $\C$-algebras. Let
\[
(\phi_n,\{h_n^X\}_{X\in\cC}): (A_n,F_n,J^{(n)}) \to (A_{n+1},F_{n+1},J^{(n+1)})
\]
and
\[
(\psi_n,\{l_n^X\}_{X\in\cC}): (B_n,G_n,I^{(n)}) \to (B_{n+1},G_{n+1},I^{(n+1)})
\]
be sequences of cocycle morphisms which we view as two inductive systems in the category $\C_{\cC}$.\footnote{As in previous sections, we will denote these morphisms by $(\phi_n,h_n)$ and $(\psi_n,l_n)$ for ease of notation.}

Consider two sequences of cocycle morphisms 
\[
(\kappa_n,\{r_n^X\}_{X\in\cC}): (B_n,G_n,I^{(n)}) \to (A_n,F_n,J^{(n)})
\]
and 
\[
(\theta_n,\{s_n^X\}_{X\in\cC}): (A_n,F_n,J^{(n)}) \to (B_{n+1},G_{n+1},I^{(n+1)})
\]
fitting into the family of not necessarily commutative diagrams
\begin{equation} \label{intertwiningdiagram}
\xymatrix{
\dots\ar[rr] && F_n(X) \ar[rd]^{s_n^X} \ar[rr]^{h_n^X} && F_{n+1}(X) \ar[r] \ar[rd] & \dots\\
\dots\ar[r] & G_n(X) \ar[ru]^{r_n^X} \ar[rr]^{l_n^X} && G_{n+1}(X) \ar[ru]^{r_{n+1}^X} \ar[rr] && \dots \quad .
}
\end{equation}

We will call the collection of diagrams \eqref{intertwiningdiagram} an \emph{approximate cocycle intertwining}, if the following hold: 
There exist an increasing sequence of finite sets $K_n\subset \Irr(\cC)$ containing $1_{\cC}$, finite sets $\mathcal{F}_n^X\subset F_n(X)$ and $\mathcal{G}_n^X\subset G_n(X)$ for any $X\in K_n$, and numbers $\delta_n>0$ satisfying 
\begin{enumerate}[label=\textit{(\roman*)}]
\item $\|l_n^X(x)-s_n^X(r_n^X(x))\|\leq\delta_n$
for all $X\in K_n$ and all $x\in \mathcal{G}_n^X$; \label{intert:1}
\item $\|h_n^X(x)-r_{n+1}^X(s_n^X(x))\|\leq \delta_n$ 
for all $X\in K_n$ and all $x\in\mathcal{F}_n^X$; \label{intert:2}
\item $h_n^X(\mathcal{F}_n^X)\subseteq \mathcal{F}_{n+1}^X$, $l_n^X(\mathcal{G}_n^X)\subseteq \mathcal{G}_{n+1}^X$, $r_n^X(\mathcal{G}_n^X)\subseteq \mathcal{F}_n^X$, and 
$s_n^X(\mathcal{F}_n^X)\subseteq \mathcal{G}_{n+1}^X$; \label{intert:3} 
\item $\bigcup\limits_{m>n} (h_{n,m}^X)^{-1}(\mathcal{F}_m^X)\subset F_n(X)$ and $\bigcup\limits_{m>n} (l_{n,m}^X)^{-1}(\mathcal{G}_m^X)\subset G_n(X)$ are dense for all $X\in K_n$ and all $n$; \label{intert:4}
\item $\bigcup\limits_{n\in\mathbb{N}}K_n=\Irr(\cC)$;\label{intert:5}
\item $\sum\limits_{n\in\mathbb{N}} \delta_n <\infty$. \label{intert:6}
\end{enumerate}
\end{defn}

\begin{rmk}
The conditions listed above are in the spirit of Elliott approximate intertwining arguments, as seen for example in {\cite[Definition $2.3.1$]{rordambook}} or {\cite[Definition $3.3$]{cocyclecategszabo}}. Note that applying $X=1_{\cC}$ in conditions \ref{intert:1}-\ref{intert:4} above, together with the \ref{intert:6} recover the usual assumptions in the two-sided Elliott intertwining argument.\ However, to boost the Elliott intertwining in the equivariant setting, one needs to preserve the equivariant structure.\ This is why conditions \ref{intert:1}-\ref{intert:4} above are phrased for irreducible elements in the category. These conditions resemble the assumptions needed in \cite[Definition 3.3]{cocyclecategszabo} in the case of a twisted group action, although recall that the topology on cocycle morphisms is different.

Moreover, \ref{intert:1}-\ref{intert:2} encode that the diagrams in \ref{intertwiningdiagram} approximately commute in the topology described in Lemma \ref{lemma: topologycocyclemor}. Furthermore, in \ref{intert:4} we are making crucial use of our definition of the functors $F_n$ and $G_n$ to assume that $F_n(X)$ and $G_n(X)$ have a countable dense subset for any $X\in\cC$ and any $n\in\mathbb N$.

Note that, in the light of Remark \ref{rmk: IrredDetLinearMaps}, it is enough to state the conditions above for $X\in\Irr(\cC)$.\ This will uniquely determine the family of linear maps $\{h^X\}_{X\in\cC}$. 
\end{rmk}

\begin{theorem}\label{twosidedintertwining}
Let $(A,F,J)$ and $(B,G,I)$ be inductive limits in $\C_{\cC}$ given by $$(A,F,J)=\lim\limits_{\longrightarrow}\Big\{(A_n,F_n,J^{(n)}),(\phi_n,\{h_n^X\}_{X\in\cC})\Big\}$$ and $$(B,G,I)=\lim\limits_{\longrightarrow}\Big\{(B_n,G_n,I^{(n)}),(\psi_n,\{l_n^X\}_{X\in\cC})\Big\}.$$ Let \[
(\kappa_n,\{r_n^X\}_{X\in\cC}): (B_n,G_n,I^{(n)}) \to (A_n,F_n,J^{(n)})
\]
and 
\[
(\theta_n,\{s_n^X\}_{X\in\cC}): (A_n,F_n,J^{(n)}) \to (B_{n+1},G_{n+1},I^{(n+1)})
\] be sequences of cocycle morphisms making \eqref{intertwiningdiagram} an approximate cocycle intertwining.

Then there exist mutually inverse cocycle conjugacies $(\theta,\{s^X\}_{X\in\cC}):(A,F,J)\to (B,G,I)$ and $(\kappa,\{r^X\}_{X\in\cC}):(B,G,I)\to (A,F,J)$ given by the formulae 
\begin{equation} \label{intertwining-theta}
\theta(\phi_{n,\infty}(a)) = \lim_{k\to\infty} (\psi_{k+1,\infty}\circ\theta_k\circ\phi_{n,k})(a),\quad a\in A_n
\end{equation}
\begin{equation} \label{intertwining-s}
s^X(h_{n,\infty}^X(x))=\lim_{k\to\infty}(l_{k+1,\infty}^X\circ s_k^X\circ h_{n,k}^X)(x),\quad X\in\cC,\quad x\in F_n(X)
\end{equation}
and
\begin{equation} \label{intertwining-kappa}
\kappa(\psi_{n,\infty}(b)) = \lim_{k\to\infty} (\phi_{k,\infty}\circ\kappa_k\circ\psi_{n,k})(b),\quad b\in B_n,
\end{equation}
\begin{equation} \label{intertwining-r}
r^X(l_{n,\infty}^X(x))=\lim_{k\to\infty}(h_{k,\infty}^X\circ r_k^X\circ l_{n,k}^X)(x),\quad X\in\cC,\quad x\in G_n(X).
\end{equation} The limits in the formulae \eqref{intertwining-s} and \eqref{intertwining-r} are taken in the topologies induced by the respective right inner products.
\end{theorem}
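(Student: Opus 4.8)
The plan is to run the classical two-sided Elliott approximate intertwining argument (as in \cite[Proposition 2.3.2]{rordambook}) simultaneously on the underlying $^*$-homomorphisms and, for every $X\in\Irr(\cC)$, on the associated linear maps, and then to use the completeness of the morphism spaces together with the universal property of inductive limits to assemble these fibrewise limits into genuine cocycle morphisms. By Remark \ref{rmk: IrredDetLinearMaps} it suffices to treat $X\in\Irr(\cC)$, and since the $K_n$ are increasing with $\bigcup_n K_n=\Irr(\cC)$ by \ref{intert:5}, every such $X$ eventually lies in all $K_k$. For $X\in K_n$ and $x\in F_n(X)$ write $s^{X,(k)}(x)=(l_{k+1,\infty}^X\circ s_k^X\circ h_{n,k}^X)(x)$ and $x_k=h_{n,k}^X(x)$. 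Using $l_{k+1,\infty}^X=l_{k+2,\infty}^X\circ l_{k+1}^X$, the difference $s^{X,(k+1)}(x)-s^{X,(k)}(x)$ is $l_{k+2,\infty}^X$ applied to $s_{k+1}^X(h_k^X(x_k))-l_{k+1}^X(s_k^X(x_k))$. For $x\in\mathcal{F}_n^X$ condition \ref{intert:3} gives $x_k\in\mathcal{F}_k^X$ and $s_k^X(x_k)\in\mathcal{G}_{k+1}^X$, so \ref{intert:2} and \ref{intert:1} (at object $X$ and indices $k$, $k+1$), together with the fact that the components of cocycle morphisms are contractive by condition \ref{item:linearmap3} of Lemma \ref{linearmapspicture}, yield the estimate
\[
\|s^{X,(k+1)}(x)-s^{X,(k)}(x)\|\leq \delta_k+\delta_{k+1},
\]
which is crucially independent of $X$. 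By \ref{intert:6} the sequence $(s^{X,(k)}(x))_k$ is Cauchy in the complete space $G(X)$, so \eqref{intertwining-s} defines $s^X$ on $h_{n,\infty}^X(\mathcal{F}_n^X)$; density \ref{intert:4} and continuity extend it to $h_{n,\infty}^X(F_n(X))$, and the identity $h_{n,\infty}^X=h_{n+1,\infty}^X\circ h_n^X$ shows independence of $n$. The $X=1_{\cC}$ case is exactly the scalar estimate producing $\theta$ via \eqref{intertwining-theta}; the maps $\kappa$ and $r^X$ of \eqref{intertwining-kappa} and \eqref{intertwining-r} are handled symmetrically.

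Rather than verify the conditions of Lemma \ref{linearmapspicture} for the limiting family directly, I observe that for each $k$ the pair
\[
\bigl(\psi_{k+1,\infty}\circ\theta_k\circ\phi_{n,k},\ \{l_{k+1,\infty}^X\circ s_k^X\circ h_{n,k}^X\}_{X\in\cC}\bigr)\colon (A_n,F_n,J^{(n)})\to(B,G,I)
\]
is a cocycle morphism, being the composition of $(\phi_{n,k},h_{n,k})$, $(\theta_k,s_k)$ and $(\psi_{k+1,\infty},l_{k+1,\infty})$ (Lemma \ref{compositionlinearmaps}). The estimate above shows these converge as $k\to\infty$ pointwise in the right-inner-product norm, uniformly over finite subsets of $\Irr(\cC)$, i.e. in the topology of Lemma \ref{lemma: topologycocyclemor}. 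Each defining condition of Lemma \ref{linearmapspicture} is preserved under such limits (the coherence diagram \ref{item:linearmap4} passes to the limit by continuity of $J$ and $I$, using Remark \ref{rmk: IrredDetLinearMaps} to reduce $X\otimes Y$ to its irreducible summands), so the limit is again a cocycle morphism $(A_n,F_n,J^{(n)})\to(B,G,I)$. These limit morphisms are compatible with the connecting maps $(\phi_n,h_n)$ (precomposition being continuous by Lemma \ref{lemma: compjointlycts}), so the universal property of the inductive limit $(A,F,J)$ established in the previous proposition produces a unique cocycle morphism $(\theta,\{s^X\}_{X\in\cC})\colon(A,F,J)\to(B,G,I)$ realising \eqref{intertwining-theta}--\eqref{intertwining-s}; symmetrically one obtains $(\kappa,\{r^X\}_{X\in\cC})$.

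It remains to prove $(\kappa,r)\circ(\theta,s)=\id_{(A,F,J)}$ and $(\theta,s)\circ(\kappa,r)=\id_{(B,G,I)}$, which by Definition \ref{cocyclecatgeneralised} amounts to $\kappa\circ\theta=\id_A$, $\theta\circ\kappa=\id_B$, and $r^X\circ s^X=\id_{F(X)}$, $s^X\circ r^X=\id_{G(X)}$ for all $X$. Fix $X\in\Irr(\cC)$ and $x\in F_n(X)$. Applying the formula \eqref{intertwining-r} for $r^X$ to the limit defining $s^X(h_{n,\infty}^X(x))$ and telescoping, the $j=k+1$ term $h_{k+1,\infty}^X(r_{k+1}^X(s_k^X(h_{n,k}^X(x))))$ differs from $h_{n,\infty}^X(x)=h_{k+1,\infty}^X(h_{n,k+1}^X(x))$ by at most $\delta_k$ after invoking \ref{intert:2}, while the remaining terms contribute a tail bounded by $\sum_{j\geq k+1}(\delta_j+\delta_{j+1})$, which vanishes as $k\to\infty$ by \ref{intert:6}. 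Hence $r^X(s^X(h_{n,\infty}^X(x)))=h_{n,\infty}^X(x)$, and as $\bigcup_n h_{n,\infty}^X(F_n(X))$ is dense in $F(X)$ with both maps continuous, $r^X\circ s^X=\id_{F(X)}$. The case $X=1_{\cC}$ gives $\kappa\circ\theta=\id_A$, and the reverse compositions follow identically with the two systems interchanged (using \ref{intert:1} in place of \ref{intert:2}). Thus $(\theta,s)$ and $(\kappa,r)$ are mutually inverse cocycle conjugacies.

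The genuinely new difficulty beyond the scalar argument lies in forcing the fibrewise limits over $\Irr(\cC)$ to respect the tensor-categorical structure, that is, in ensuring that the limiting family $\{s^X\}$ satisfies the coherence diagram \ref{item:linearmap4} and the remaining conditions of Lemma \ref{linearmapspicture}. The device that makes this painless is that the estimate in the first step is uniform in $X$, so convergence takes place in the topology of Lemma \ref{lemma: topologycocyclemor}, in which cocycle morphisms form a complete space; this lets a Cauchy sequence of bona fide cocycle morphisms have a cocycle-morphism limit, sidestepping any direct manipulation of the pentagon and coherence diagrams in the limit.
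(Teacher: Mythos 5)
Your proposal is correct and follows essentially the same route as the paper's proof: the same $\delta_k+\delta_{k+1}$ Cauchy estimate on the sets $\mathcal{F}_k^X$ (via \ref{intert:1}--\ref{intert:3} and contractivity from \ref{item:linearmap3}) establishes the limits \eqref{intertwining-s} and \eqref{intertwining-r}, and the same telescoping argument shows the two cocycle conjugacies are mutually inverse. The only difference is organisational: where the paper verifies conditions \ref{item:linearmap1}--\ref{item:linearmap5} of Lemma \ref{linearmapspicture} for the limit family by hand and then checks compatibility directly, you certify the limit as a cocycle morphism via closedness of the cocycle-morphism conditions under limits in the topology of Lemma \ref{lemma: topologycocyclemor} (correctly flagging the reduction of $X\otimes Y$ to irreducible summands as the only delicate point) and then assemble $(\theta,s)$ through the universal property of the inductive limit --- a legitimate repackaging of the same verification.
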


\begin{proof}
We will show that the limits in \eqref{intertwining-theta} and \eqref{intertwining-s} exist and that the pair $(\theta,\{s^X\}_{X\in\cC})$ is a cocycle morphism. Firstly, note that the limit in \eqref{intertwining-theta} exists and $\theta:A\to B$ is a $^*$-homomorphism by the general Elliott intertwining, as can be found for example in {\cite[Proposition 2.3.2]{rordambook}}.

We first show that the limit in \eqref{intertwining-s} exists for all $X\in\Irr(\cC).$ We can employ a similar argument to the one in {\cite[Proposition 2.3.2]{rordambook}}. Let $X\in K_n$ and given $x_n\in F_n(X)$, by condition \ref{intert:4} in Definition \ref{def: approx-int}, we may assume $x_n$ is contained in $\bigcup_{m>n} (h_{n,m}^X)^{-1}(\mathcal{F}_m^X)$. Moreover, we may assume that $h_{n,m}^X(x_n)\in \mathcal{F}_m^X$ for all $m$ greater than or equal to some $m_0$. 

Consider the possibly non-commutative diagram 
\begin{equation} 
\xymatrix{
F_n(X)\ar[r]^{h_{n,m}^X} & F_m(X) \ar[d]_{s_m^X} \ar[r]^{h_m^X} & F_{m+1}(X) \ar[d]^{s_{m+1}^X} & \\
&  G_{m+1}(X) \ar[ru]^{r_{m+1}^X} \ar[r]_{l_{m+1}^X} & G_{m+2}(X) \ar[r]_{\quad l_{m+2,\infty}^X} & G(X).
}
\end{equation} Using triangle inequality, it follows that 
\begin{align*}
\|(s_{m+1}^X\circ h_m^X)(x_m)-(l_{m+1}^X\circ s_m^X)(x_m)\| &\leq \|s_{m+1}^X(h_m^X(x_m)-(r_{m+1}^X\circ s_m^X)(x_m))\|\\ &+\|(s_{m+1}^X\circ r_{m+1}^X-l_{m+1}^X)(s_m^X(x_m))\|.
\end{align*} Also, by \ref{intert:3} $s_m^X(\mathcal{F}_m^X)\subset \mathcal{G}_{m+1}^X$, so we may combine \ref{intert:1}, \ref{intert:2}, and that $s_{m+1}^X$ is contractive to get that
 $$\|(s_{m+1}^X\circ h_m^X)(x_m)-(l_{m+1}^X\circ s_m^X)(x_m)\|\leq \delta_m+\delta_{m+1}, \quad \forall \ x_m\in \mathcal{F}_m^X\subseteq F_m(X). $$ 
Moreover, since $l_{m+2,\infty}^X$ is contractive, it follows that  
\begin{align*}
&\|(l_{m+2,\infty}^X\circ s_{m+1}^X\circ h_{n,m+1}^X)(x_n)-(l_{m+1,\infty}^X\circ s_m^X\circ h_{n,m}^X)(x_n)\|=\\ &\|l_{m+2,\infty}^X((s_{m+1}^X\circ h_m^X)(h_{n,m}^X(x_n))-(l_{m+1}^X\circ s_m^X)(h_{n,m}^X(x_n)))\|\\&\leq\delta_m+\delta_{m+1},
\end{align*} for all $m\geq m_0$ and all $x_n\in F_n(X)$.\ Using \ref{intert:6}, the sequence in \eqref{intertwining-s} is Cauchy and therefore convergent. Moreover \ref{intert:5} yields that $s^X:F(X)\to G(X)$ is a well-defined linear map for all $X\in\Irr(\cC)$. Extending by linearity we may define $s^X$ for any $X\in \cC$.
\par We claim that $(\theta,\{s^X\}_{X\in\Irr(\cC)})$ induces a cocycle morphism. For this, we check the conditions in Lemma \ref{linearmapspicture}. Given $X,Y\in\Irr(\cC)$, consider a morphism $f\in\Hom(X,Y)$. Since $\Hom(X,Y)$ is $0$ if $X\neq Y$ and $\Hom(X,X)\cong\mathbb C$, it follows immediately that $G(f)\circ s^X=s^Y\circ F(f)$. 

Next, we want to show that $\theta(\langle x,y\rangle_A)=\langle s^X(x),s^X(y)\rangle_B$ for any $x,y\in F(X)$. It suffices to check this condition on the dense subset $\bigcup_{n\geq 1}h_{n,\infty}^X(F_n(X))$. If $x_n,y_n\in F_n(X)$ and $k>n$, as $l_{k+1,\infty}^X\circ s_k^X\circ h_{n,k}^X$ satisfies condition \ref{item:linearmap3} of Lemma \ref{linearmapspicture} one has 

\begin{adjustbox}{max width=\textwidth}
$\langle(l_{k+1,\infty}^X\circ s_k^X\circ h_{n,k}^X)(x_n), (l_{k+1,\infty}^X\circ s_k^X\circ h_{n,k}^X)(y_n)\rangle=(\psi_{k+1,\infty}\circ\theta_k\circ\phi_{n,k})(\langle x_n,y_n\rangle).$
\end{adjustbox}

Taking the limit as $k$ goes to infinity and using the formulae in \eqref{intertwining-theta} and \eqref{intertwining-s},
$$\theta(\phi_{n,\infty}(\langle x_n,y_n\rangle))=\langle s^X(h_{n,\infty}^X(x_n)),s^X(h_{n,\infty}^X(y_n))\rangle.$$Moreover, $\phi_{n,\infty}(\langle x_n,y_n\rangle)= \langle h_{n,\infty}^X(x_n),h_{n,\infty}^X(y_n)\rangle$, so $\theta(\langle x,y\rangle)=\langle s^X(x),s^X(y)\rangle$ for any $x,y$ in the dense subset $\bigcup_{n\geq 1}h_{n,\infty}^X(F_n(X))$.

Then, we check \ref{item:linearmap4} of Lemma \ref{linearmapspicture}. Let $X,Y\in\cC$. Since $(\psi_{k+1,\infty}\circ\theta_k\circ\phi_{n,k},l_{k+1,\infty}\circ s_k\circ h_{n,k})$ is a cocycle morphism, the diagram \[
\begin{tikzcd}
F_n(Y)\boxtimes F_n(X)
\arrow[swap]{d}{(l_{k+1,\infty}^Y\circ s_k^Y\circ h_{n,k}^Y)\boxtimes (l_{k+1,\infty}^X\circ s_k^X\circ h_{n,k}^X)}
\arrow{r}{J_{X,Y}^{(n)}}
& F_n(X\otimes Y)\arrow{d}{l_{k+1,\infty}^{X\otimes Y}\circ s_k^{X\otimes Y}\circ h_{n,k}^{X\otimes Y}}
 \\
G(Y)\boxtimes G(X)
\arrow{r}{I_{X,Y}}
& G(X\otimes Y)
\end{tikzcd}
\] commutes. Then, by taking the limit as $k$ goes to infinity, it follows that $I_{X,Y}\circ ((s^Y\circ h_{n,\infty}^Y)\boxtimes (s^X\circ h_{n,\infty}^X))=s^{X\otimes Y}\circ h_{n,\infty}^{X\otimes Y}\circ J_{X,Y}^{(n)}.$ Note that by the construction of the map $J_{X,Y}$ in Lemma \ref{inductivelimitfunctor}, $$J_{X,Y}\circ(h_{n,\infty}^Y\boxtimes h_{n,\infty}^X)=h_{n,\infty}^{X\otimes Y}\circ J_{X,Y}^{(n)}.$$ Thus, $$I_{X,Y}\circ(s^Y\boxtimes s^X)\circ (h_{n,\infty}^Y\boxtimes h_{n,\infty}^X)=s^{X\otimes Y}\circ J_{X,Y}\circ (h_{n,\infty}^Y\boxtimes h_{n,\infty}^X).$$ By density it now follows that $I_{X,Y}\circ(s^Y\boxtimes s^X)=s^{X\otimes Y}\circ J_{X,Y}$. Hence, as $\cC$ is semisimple, $(\theta,\{s^X\}_{X\in\Irr(\cC)})$ induces a cocycle morphism (see Remark \ref{rmk: IrredDetLinearMaps}).\ This follows as the map $s^X$ will be given by a direct sum of linear maps corresponding to irreducible objects. Since the same holds for $h_{n,\infty}^X, l_{k+1,\infty}^X, s_k^X,$ and $h_{n,k}^X$, and the limit preserves this decomposition, the formula in \eqref{intertwining-s} holds for any $X\in\cC$. 

It follows in the same way that $(\kappa,\{r^X\}_{X\in\cC})$ given by the formulae in \eqref{intertwining-kappa} and \eqref{intertwining-r} yields a well-defined cocycle morphism. Moreover, the fact that $\theta$ and $\kappa$ are mutually inverse isomorphisms follows from {\cite[Proposition 2.3.2]{rordambook}}. 

Finally, it remains to check that for any $X\in\cC$, $r^X\circ s^X=\id_{F(X)}$ and $s^X\circ r^X=\id_{G(X)}$. It suffices to show that for any $n\geq 1$, $r^X\circ s^X\circ h_{n,\infty}^X=h_{n,\infty}^X$. For any $k>n$, by \eqref{intertwining-r}, we have that 
\begin{equation}\label{eqn:mistake}
r^X\circ l_{k+1,\infty}^X\circ s_k^X\circ h_{n,k}^X=\lim\limits_{m\to\infty}(h_{m,\infty}^X\circ r_m^X\circ l_{k+1,m}^X\circ s_k^X\circ h_{n,k}^X).
\end{equation}
Using conditions \ref{intert:1},\ref{intert:2} and \ref{intert:6} of Definition \ref{def: approx-int}, it follows that the right hand side evaluated at $x\in (h_{n,k}^X)^{-1}(\mathcal{F}_k^X)$ converges to $h_{n,\infty}(x)$ uniformly on $k$ as $k$ tends to infinity.\ Now, taking the limit as $k$ goes to infinity in \eqref{eqn:mistake} and using condition \ref{intert:4} of Definition \ref{def: approx-int}, we get that $r^X\circ s^X\circ h_{n,\infty}^X=h_{n,\infty}^X$. Similarly, $s^X\circ r^X=\id_{G(X)}$, so $(\theta,\{s^X\}_{X\in\cC}):(A,F,J)\to (B,G,I)$ and $(\kappa,\{r^X\}_{X\in\cC}):(B,G,I)\to (A,F,J)$ are mutually inverse cocycle conjugacies.
\end{proof}

We now use Theorem \ref{twosidedintertwining} to show that if we assume that the diagrams in \eqref{intertwiningdiagram} commute up to approximate unitary equivalence, then there exist mutually inverse cocycle conjugacies as in Theorem \ref{twosidedintertwining}. The proof follows in a similar fashion to {\cite[Theorem 3.6]{cocyclecategszabo}} and {\cite[Corollary 2.3.3]{rordambook}}.

\begin{theorem}\label{intertapproxuniteq}
Let $\cC$ be a semisimple C$^*$-tensor category with countably many isomorphism classes of simple objects. Let $(F_n,J^{(n)}): \cC\curvearrowright A_n$ and $(G_n,I^{(n)}): \cC\curvearrowright B_n$ be sequences of actions on separable $\C$-algebras. Let
\[
(\phi_n,\{h_n^X\}_{X\in\cC}): (A_n,F_n,J^{(n)}) \to (A_{n+1},F_{n+1},J^{(n+1)})
\]
and
\[
(\psi_n,\{l_n^X\}_{X\in\cC}): (B_n,G_n,I^{(n)}) \to (B_{n+1},G_{n+1},I^{(n+1)})
\]
be sequences of cocycle morphisms, in the sense of Lemma \ref{linearmapspicture}, which we view as two inductive systems in the category $\C_{\cC}$.

Consider two sequences of extendible cocycle morphisms 
\[
(\kappa_n,\{r_n^X\}_{X\in\cC}): (B_n,G_n,I^{(n)}) \to (A_n,F_n,J^{(n)})
\]
and 
\[
(\theta_n,\{s_n^X\}_{X\in\cC}): (A_n,F_n,J^{(n)}) \to (B_{n+1},G_{n+1},I^{(n+1)})
\]
fitting into the not necessarily commutative collection of diagrams
\begin{equation}\label{eq: DiagThmUnitEq}
\xymatrix{
\dots\ar[rr] && F_n(X) \ar[rd]^{s_n^X} \ar[rr]^{h_n^X} && F_{n+1}(X) \ar[r] \ar[rd] & \dots\\
\dots\ar[r] & G_n(X) \ar[ru]^{r_n^X} \ar[rr]^{l_n^X} && G_{n+1}(X) \ar[ru]^{r_{n+1}^X} \ar[rr] && \dots \quad .
}
\end{equation}Suppose that $$(\psi_n,l_n)\approx_u (\theta_n, s_n)\circ (\kappa_n, r_n)\quad and \quad (\phi_n, h_n)\approx_u (\kappa_{n+1}, r_{n+1})\circ (\theta_n, s_n)$$ for all $n\in\mathbb{N}$.\ Then there exist mutually inverse cocycle conjugacies $(\theta,\{s^X\}_{X\in\cC}):(A,F,J)\to (B,G,I)$ and $(\kappa,\{r^X\}_{X\in\cC}):(B,G,I)\to (A,F,J)$. Moreover, if $\phi_n$ and $\psi_n$ are extendible for any $n\in\mathbb{N}$, then $$(\theta,s)\circ (\phi_{n,\infty},h_{n,\infty})\approx_u (\psi_{n+1,\infty},l_{n+1,\infty})\circ (\theta_n,s_n)$$ and $$(\kappa,r)\circ (\psi_{n,\infty},l_{n,\infty})\approx_u (\phi_{n,\infty},h_{n,\infty})\circ (\kappa_n,r_n).$$
\end{theorem}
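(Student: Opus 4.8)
The plan is to reduce the statement to the exact two-sided intertwining of Theorem~\ref{twosidedintertwining}. Since the diagrams in \eqref{eq: DiagThmUnitEq} commute only up to approximate unitary equivalence, I would first replace each diagonal map by a unitarily equivalent cocycle morphism
\[
(\theta_n',s_n') = (\Ad(b_n),h_{b_n})\circ(\theta_n,s_n), \qquad (\kappa_n',r_n') = (\Ad(a_n),h_{a_n})\circ(\kappa_n,r_n),
\]
for unitaries $a_n\in\U(\M(A_n))$ and $b_n\in\U(\M(B_{n+1}))$ to be constructed, chosen so that the collection \eqref{eq: DiagThmUnitEq} with the primed maps becomes a genuine approximate cocycle intertwining in the sense of Definition~\ref{def: approx-int}. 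As $\Ad(a_n),\Ad(b_n)$ are inner, the corrected maps are again extendible cocycle morphisms between the same algebras, so the inductive limits $(A,F,J)$ and $(B,G,I)$ are unchanged, and Theorem~\ref{twosidedintertwining} then produces mutually inverse cocycle conjugacies $(\theta,s)$ and $(\kappa,r)$.

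The unitaries are produced by a coupled recursion, interleaved with the usual choice of increasing finite sets $K_n\subseteq\Irr(\cC)$ with $\bigcup_n K_n=\Irr(\cC)$, tolerances $\delta_n>0$ with $\sum_n\delta_n<\infty$, and finite sets $\mathcal{F}_n^X\subseteq F_n(X)$, $\mathcal{G}_n^X\subseteq G_n(X)$; these are enlarged stage by stage to secure the inclusion and density conditions \ref{intert:3}--\ref{intert:5}, exactly as in the non-equivariant bookkeeping of \cite[Corollary~2.3.3]{rordambook}, using that each $F_n(X),G_n(X)$ admits a countable dense subset. Starting from $a_1=1$ and given $a_n$, the hypothesis $(\psi_n,l_n)\approx_u(\theta_n,s_n)\circ(\kappa_n,r_n)$ together with Lemma~\ref{lemma: approxunitconj} provides $c_n\in\U(\M(B_{n+1}))$ with $\max_{X\in K_n}\|l_n^X(y)-c_n\rhd(s_n^X\circ r_n^X)(y)\lhd c_n^*\|\le\delta_n$ for $y\in\mathcal{G}_n^X$; I then set $b_n=c_n\,\theta_n^\dagger(a_n)^*$. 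Extendibility of $\theta_n$ gives $s_n^X\circ h_{a_n}^X=h_{\theta_n^\dagger(a_n)}^X\circ s_n^X$ by Remark~\ref{rmk: ExtLinMaps}, so the linear maps of $(\theta_n',s_n')\circ(\kappa_n',r_n')$ are $h_{c_n}^X\circ(s_n^X\circ r_n^X)$ and condition \ref{intert:1} holds. Symmetrically, $(\phi_n,h_n)\approx_u(\kappa_{n+1},r_{n+1})\circ(\theta_n,s_n)$ yields $d_n\in\U(\M(A_{n+1}))$ witnessing \ref{intert:2} up to $\delta_n$, and I put $a_{n+1}=d_n\,\kappa_{n+1}^\dagger(b_n)^*$, pushing $b_n$ through the extendible map $\kappa_{n+1}$ via Remark~\ref{rmk: ExtLinMaps}. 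This determines all the unitaries and makes \ref{intert:1}--\ref{intert:2} hold, so Theorem~\ref{twosidedintertwining} applies.

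For the two remaining approximate unitary equivalences I would use the extendibility of $\phi_n,\psi_n$. The key point is that the parallelograms already commute up to $\approx_u$ at the level of the \emph{original} maps: combining the two hypotheses and using that right composition preserves $\approx_u$, while left composition with the extendible maps $\theta_{n+1}$ preserves $\approx_u$ (the witnessing net being transported by $\theta_{n+1}^\dagger$), one obtains $(\psi_{n+1},l_{n+1})\circ(\theta_n,s_n)\approx_u(\theta_{n+1},s_{n+1})\circ(\phi_n,h_n)$. Iterating this relation downwards and composing on the left with the extendible limit maps $\psi_{m+1,\infty}$ gives, for every $m\ge n$,
\[
(\psi_{m+1,\infty},l_{m+1,\infty})\circ(\theta_m,s_m)\circ(\phi_{n,m},h_{n,m})\approx_u(\psi_{n+1,\infty},l_{n+1,\infty})\circ(\theta_n,s_n),
\]
the finitely many applications of transitivity of $\approx_u$ (symmetry being built into Definition~\ref{unitaryeq}) keeping the errors finite. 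On the other hand, since $\theta_m'=\Ad(b_m)\circ\theta_m$ and $\psi_{m+1,\infty}$ is extendible, the map $(\psi_{m+1,\infty},l_{m+1,\infty})\circ(\theta_m',s_m')\circ(\phi_{n,m},h_{n,m})$ is honestly unitarily equivalent to the unprimed one via $\psi_{m+1,\infty}^\dagger(b_m)$, and by formula \eqref{intertwining-s} these converge, as $m\to\infty$, to $(\theta,s)\circ(\phi_{n,\infty},h_{n,\infty})$ in the topology of Lemma~\ref{lemma: topologycocyclemor}. A triangle-inequality argument, using that conjugation by a fixed unitary is isometric, then upgrades this convergence to $(\theta,s)\circ(\phi_{n,\infty},h_{n,\infty})\approx_u(\psi_{n+1,\infty},l_{n+1,\infty})\circ(\theta_n,s_n)$; the remaining relation is entirely symmetric.

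The main obstacle is the coupled recursion together with the finite-set bookkeeping: correcting $(\theta_n,s_n)$ forces the multiplier unitary $\theta_n^\dagger(a_n)$ into the definition of $b_n$, which then re-enters the definition of $a_{n+1}$ through $\kappa_{n+1}^\dagger$, so one must carefully propagate the unital extensions of the extendible maps through every composition and re-verify the inclusions \ref{intert:3}--\ref{intert:4} for the corrected maps with the enlarged finite sets. The second delicate point is the Moreover part: because the parallelograms commute only up to the limiting relation $\approx_u$ and not up to an honest unitary equivalence, the passage to the limit must combine the vanishing-error approximate unitary equivalence of the parallelograms with the non-vanishing but unitarily implemented corrections $b_m$, which is precisely where extendibility of $\psi_n$ enters, through the unital extension $\psi_{m+1,\infty}^\dagger(b_m)$.
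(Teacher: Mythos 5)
Your proposal is correct and follows essentially the same route as the paper: perturb the diagonal maps by unitaries (transported through the unital extensions $\theta_n^\dagger$, $\kappa_{n+1}^\dagger$ of the extendible maps, exactly as in Remark \ref{rmk: VerticalComp}) so that the diagrams become an approximate cocycle intertwining, apply Theorem \ref{twosidedintertwining}, and obtain the \emph{Moreover} part by chaining the hypothesised approximate unitary equivalences and passing to the limit via the explicit formulae \eqref{intertwining-theta}--\eqref{intertwining-s}. Your explicit recursion $b_n=c_n\,\theta_n^\dagger(a_n)^*$, $a_{n+1}=d_n\,\kappa_{n+1}^\dagger(b_n)^*$ is just an unwound version of the paper's inductive choice of $(\zeta_n,S_n)$ and $(\eta_n,R_n)$.
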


\begin{proof}
This will follow as an application of Theorem \ref{twosidedintertwining}. For this, it suffices to show that we can obtain a collection of diagrams as in Definition \ref{def: approx-int}. The strategy is to replace the families of cocycle morphisms $(\kappa_n,r_n)$ and $(\theta_n,s_n)$ by unitary perturbations $(\eta_n, R_n)$ and $(\zeta_n, S_n)$ respectively such that the diagrams in \eqref{eq: DiagThmUnitEq} become an approximate cocycle intertwining.

Let $\delta_n=2^{-n}$ for any $n\in\mathbb{N}$ and $K_n$ an increasing sequence of finite sets containing $1_{\cC}$ such that $\bigcup_{n\in\mathbb{N}}K_n=\Irr(\cC)$. For any $n\in\mathbb{N}$, and any $X\in K_n$, we can choose $t_n^X\in\mathbb{N}$ and finite sets $\{f_{m,n}^X\}_{1\leq m\leq t_n^X}\subset F_n(X)$ and $\{g_{m,n}^X\}_{1\leq m\leq t_n^X}\subset G_n(X)$ such that the inclusions
\begin{equation}\label{eqn:density}
\begin{adjustbox}{max width=\textwidth}
    $\bigcup\limits_{k>n}(h_{n,k}^X)^{-1}(\{f_{m,k}^X\}_{1\leq m\leq t_k^X})\subset F_n(X)
\quad \AND \quad \bigcup\limits_{k>n}(l_{n,k}^X)^{-1}(\{g_{m,k}^X\}_{1\leq m\leq t_k^X})\subset G_n(X)$
\end{adjustbox}
\end{equation} are dense for all $n\in\mathbb{N}$. To simplify notation, we will write $\{f_{m,n}^X\}$ to denote the set $\{f_{m,n}^X\}_{1\leq m\leq t_n^X}$.

Set $(\eta_1, R_1)=(\kappa_1,r_1)$, $\mathcal{G}_1^X=\{g_{m,1}^X\}\subset G_1(X)$, and $\mathcal{F}_1^X=\{f_{m,1}^X\}\cup r_1^X(\mathcal{G}_1^X)$ for any $X\in K_1$. Since $$(\psi_1,l_1)\approx_u (\theta_1, s_1)\circ (\kappa_1, r_1)= (\theta_1, s_1)\circ (\eta_1, R_1),$$ we can find a unitary $u_1\in\mathcal{U}(\mathcal{M}(B_2))$ such that if we set $(\zeta_1, S_1)=\Ad(u_1)\circ (\theta_1,s_1)$, one has that for any $X\in K_1$
$$\max\limits_{x\in\mathcal{G}_1^X}\|l_1^X(x)-S_1^X(R_1^X(x))\|\leq\delta_1.$$

At the next stage, let $\mathcal{G}_2^X=\{g_{m,2}^X\}\cup S_1^X(\mathcal{F}_1^X)\cup l_1^X(\mathcal{G}_1^X)$ for any $X\in K_1$ and $\mathcal{G}_2^X=\{g_{m,2}^X\}$ for any $X\in K_2\setminus K_1$. Using the assumption $(\phi_1, h_1)\approx_u (\kappa_2, r_2)\circ (\theta_1, s_1)$, that $(\theta_1,s_1)$ is unitarily equivalent to $(\zeta_1,S_1)$, and that $\kappa_2$ is extendible, it follows that $(\phi_1, h_1)\approx_u (\kappa_2, r_2)\circ (\zeta_1, S_1)$ (see Remark \ref{rmk: VerticalComp}). Therefore, there exists a unitary $v_2\in\mathcal{U}(\mathcal{M}(A_2))$ such that if we set $(\eta_2,R_2)=\Ad(v_2)\circ (\kappa_2,r_2)$, we have that for any $X\in K_1$ $$\max\limits_{x\in\mathcal{F}_1^X}\|h_1^X(x)-R_2^X(S_1^X(x))\|\leq\delta_1.$$ Then, we let $\mathcal{F}_2^X=\{f_{m,2}^X\}\cup R_2^X(\mathcal{G}_2^X)\cup h_1^X(\mathcal{F}_1^X)$ for any $X\in K_1$ and $\mathcal{F}_2^X=\{f_{m,2}^X\}\cup R_2^X(\mathcal{G}_2^X)$ for any $X\in K_2\setminus K_1$. 

By continuing inductively we construct finite sets
\begin{equation}\label{eq: FiniteSets1}
\mathcal{G}_n^X=\{g_{m,n}^X\}\cup S_{n-1}^X(\mathcal{F}_{n-1}^X)\cup l_{n-1}^X(\mathcal{G}_{n-1}^X)\subset G_n(X),
\end{equation}
\begin{equation}\label{eq: FiniteSets2}
\mathcal{F}_n^X=\{f_{m,n}^X\}\cup R_n^X(\mathcal{G}_n^X)\cup h_{n-1}^X(\mathcal{F}_{n-1}^X)\subset F_n(X)
\end{equation} for any $X\in K_{n-1}$ and
\begin{equation}\label{eq: FiniteSets3}
\mathcal{G}_n^X=\{g_{m,n}^X\}\subset G_n(X),
\end{equation}
\begin{equation}\label{eq: FiniteSets4}
\mathcal{F}_n^X=\{f_{m,n}^X\}\cup R_n^X(\mathcal{G}_n^X)\subset F_n(X)
\end{equation} for any $X\in K_n\setminus K_{n-1}$ and unitaries $v_n\in\mathcal{U}(\mathcal{M}(A_n))$ and $u_n\in\mathcal{U}(\mathcal{M}(B_{n+1}))$ such that setting $(\eta_n,R_n)=\Ad(v_n)\circ (\kappa_n,r_n)$ and $(\zeta_n,S_n)=\Ad(u_n)\circ (\theta_n,s_n)$ one has for any $X\in K_n$
\begin{equation}\label{eq: CommTri1}
\max\limits_{x\in\mathcal{G}_n^X}\|l_n^X(x)-S_n^X(R_n^X(x))\|\leq\delta_n   
\end{equation} and
\begin{equation}\label{eq: CommTri2}
\max\limits_{x\in\mathcal{F}_n^X}\|h_n^X(x)-R_{n+1}^X(S_n^X(x))\|\leq\delta_n.    
\end{equation}

We claim that the diagram 
\begin{equation}\label{eq: NewIntertDiagram}
\xymatrix{
\dots\ar[rr] && F_n(X) \ar[rd]^{S_n^X} \ar[rr]^{h_n^X} && F_{n+1}(X) \ar[r] \ar[rd] & \dots\\
\dots\ar[r] & G_n(X) \ar[ru]^{R_n^X} \ar[rr]^{l_n^X} && G_{n+1}(X) \ar[ru]^{R_{n+1}(X)} \ar[rr] && \dots \quad .
}
\end{equation} is an approximate cocycle intertwining in the sense of Definition \ref{def: approx-int}. Conditions \ref{intert:1} and \ref{intert:2} is ensured by \eqref{eq: CommTri1} and \eqref{eq: CommTri2} respectively. Then, condition \ref{intert:3} follows by \eqref{eq: FiniteSets1}, \eqref{eq: FiniteSets2},\eqref{eq: FiniteSets3}, and \eqref{eq: FiniteSets4}. Moreover, condition \ref{intert:4} follows by \eqref{eq: FiniteSets1}, \eqref{eq: FiniteSets2},\eqref{eq: FiniteSets3}, \eqref{eq: FiniteSets4}, and the choice of finite sets $\{f_{m,n}^X\}, \{g_{m,n}^X\}$ from \eqref{eqn:density}.\ Then, \ref{intert:5} is ensured by the choice of finite sets $K_n$, while \ref{intert:6} is satisfied because the sum $\sum_n2^{-n}$ converges.

Hence, by Theorem \ref{twosidedintertwining} applied to the family of diagrams in \eqref{eq: NewIntertDiagram}, there exists mutually inverse cocycle conjugacies $(\theta,\{s^X\}_{X\in\cC}):(A,F,J)\to (B,G,I)$ and $(\kappa,\{r^X\}_{X\in\cC}):(B,G,I)\to (A,F,J)$. Moreover, Theorem \ref{twosidedintertwining} also gives that \begin{equation} \label{eq: intertwining-theta}
\theta(\phi_{n,\infty}(a)) = \lim_{k\to\infty} (\psi_{k+1,\infty}\circ\zeta_k\circ\phi_{n,k})(a),\quad a\in A_n
\end{equation} and
\begin{equation} \label{eq: intertwining-s}
s^X(h_{n,\infty}^X(x))=\lim_{k\to\infty}(l_{k+1,\infty}^X\circ S_k^X\circ h_{n,k}^X)(x),\quad X\in\cC,\quad x\in F_n(X).
\end{equation}

Now, assume that $\phi_n$ and $\psi_n$ are extendible. Recall that $$(\psi_m,l_m)\approx_u (\theta_m, s_m)\circ (\kappa_m, r_m)\ \text{and} \ (\phi_m, h_m)\approx_u (\kappa_{m+1}, r_{m+1})\circ (\theta_m, s_m)$$ and $(\zeta_m,S_m)=\Ad(u_m)\circ (\theta_m,s_m)$ for any $m\in\mathbb{N}$. Then, for all $n\geq 1$ and all $k>n$ we have that 
\[
\begin{array}{cl}
\multicolumn{2}{l}{ (\psi_{n+1,k+1},l_{n+1,k+1})\circ (\theta_n,s_n) } \\
\approx_u& (\theta_k,s_k)\circ(\kappa_k,r_k)\circ(\theta_{k-1},s_{k-1})\circ\ldots\circ(\kappa_{n+1},r_{n+1})\circ (\theta_n,s_{n}) \\
\approx_u& (\theta_k,s_{k})\circ (\phi_{n,k},h_{n,k}) \\
\approx_u& (\zeta_k,S_{k})\circ (\phi_{n,k},h_{n,k}).
\end{array}
\] Composing with $(\psi_{k+1,\infty},l_{k+1,\infty})$ then by \ref{rmk: ExtMor} we have that $$(\psi_{n+1,\infty},l_{n+1,\infty})\circ (\theta_n,s_n)\approx_u (\psi_{k+1,\infty},l_{k+1,\infty})\circ (\zeta_k, S_k)\circ(\phi_{n,k},h_{n,k}).$$ Then, by \eqref{eq: intertwining-theta} and \eqref{eq: intertwining-s}, one gets that $$(\theta,s)\circ (\phi_{n,\infty},h_{n,\infty})\approx_u(\psi_{n+1,\infty},l_{n+1,\infty})\circ (\theta_n,s_n).$$ 
The fact that $$(\kappa,r)\circ (\psi_{n,\infty},l_{n,\infty})\approx_u (\phi_{n,\infty},h_{n,\infty})\circ (\kappa_n,r_n)$$ follows analogously.
\end{proof}

As a corollary of Theorem \ref{intertapproxuniteq}, we obtain the following result.

\begin{cor}\label{intertidentity}
Let $\cC$ be a semisimple C$^*$-tensor category with countably many isomorphism classes of simple objects. Let $(F,J): \cC\curvearrowright A$ and $(G,I): \cC\curvearrowright B$ be actions on separable $\C$-algebras. Let
\[
(\phi, h): (A,F,J) \to (B,G,I) \quad
and \quad
(\psi, l): (B,G,I) \to (A,F,J)
\] be two extendible cocycle morphisms such that $$\id_A\approx_u (\psi,l)\circ (\phi, h)\quad and \quad \id_B\approx_u (\phi,h)\circ (\psi, l).$$ Then there exist mutually inverse cocycle conjugacies \[
(\Phi, H): (A,F,J) \to (B,G,I) \quad
and \quad
(\Psi, L): (B,G,I) \to (A,F,J)
\] such that $$(\Phi, H)\approx_u (\phi,h) \quad and \quad (\Psi, L)\approx_u (\psi,l).$$
\end{cor}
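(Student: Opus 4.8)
The plan is to realise the statement as a direct application of Theorem \ref{intertapproxuniteq} to a pair of \emph{constant} inductive systems. Put $A_n = A$ and $B_n = B$ for every $n$, equip them with the given actions $(F, J)$ and $(G, I)$, and take as connecting morphisms the identity cocycle morphisms $(\phi_n, h_n) = (\id_A, \{\id_{F(X)}\}_{X\in\cC})$ and $(\psi_n, l_n) = (\id_B, \{\id_{G(X)}\}_{X\in\cC})$. By the construction of inductive limits in Section \ref{section: IndLimits}, the inductive limit of a constant system with identity connecting maps is canonically cocycle-conjugate to the original $\cC$-$\C$-algebra, so $\lim\{(A_n, F_n, J^{(n)}), (\phi_n, h_n)\} \cong (A, F, J)$ and likewise for $B$; under these identifications each $\phi_{n,\infty}$ and $\psi_{n,\infty}$ is the canonical isomorphism, and each $\phi_n$, $\psi_n$ is trivially extendible.

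Next I would fill in the diagonal maps of the intertwining diagram \eqref{eq: DiagThmUnitEq} using $(\phi, h)$ and $(\psi, l)$ themselves: set $(\theta_n, s_n) = (\phi, h): (A_n, F_n, J^{(n)}) \to (B_{n+1}, G_{n+1}, I^{(n+1)})$ and $(\kappa_n, r_n) = (\psi, l): (B_n, G_n, I^{(n)}) \to (A_n, F_n, J^{(n)})$ for all $n$. These are extendible by hypothesis, as required in Theorem \ref{intertapproxuniteq}. The two approximate-commutativity assumptions of that theorem then read $(\psi_n, l_n) = \id_B \approx_u (\theta_n, s_n)\circ(\kappa_n, r_n) = (\phi, h)\circ(\psi, l)$ and $(\phi_n, h_n) = \id_A \approx_u (\kappa_{n+1}, r_{n+1})\circ(\theta_n, s_n) = (\psi, l)\circ(\phi, h)$, which are exactly the standing hypotheses $\id_B \approx_u (\phi, h)\circ(\psi, l)$ and $\id_A \approx_u (\psi, l)\circ(\phi, h)$.

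Applying Theorem \ref{intertapproxuniteq} therefore yields mutually inverse cocycle conjugacies $(\Phi, H) := (\theta, s): (A, F, J)\to (B, G, I)$ and $(\Psi, L) := (\kappa, r): (B, G, I)\to (A, F, J)$. To identify these with $(\phi, h)$ and $(\psi, l)$ up to approximate unitary equivalence I would invoke the ``moreover'' clause of the same theorem: since the identity connecting maps $\phi_n, \psi_n$ are extendible, one has $(\Phi, H)\circ(\phi_{n,\infty}, h_{n,\infty}) \approx_u (\psi_{n+1,\infty}, l_{n+1,\infty})\circ(\theta_n, s_n)$. Because $\phi_{n,\infty}$ and $\psi_{n+1,\infty}$ are the canonical isomorphisms and $(\theta_n, s_n) = (\phi, h)$, the right-hand side is (after transporting along the identifications above) just $(\phi, h)$ while the left-hand side is $(\Phi, H)$, giving $(\Phi, H)\approx_u (\phi, h)$; the symmetric computation yields $(\Psi, L)\approx_u (\psi, l)$.

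The only point requiring genuine care is the bookkeeping with the canonical identifications of the constant inductive limits with $A$ and $B$: one must confirm that pre- and post-composing with the isomorphisms $\phi_{n,\infty}$, $\psi_{n,\infty}$ does not disturb the $\approx_u$ relations. This is harmless because these maps are honest cocycle conjugacies and $\approx_u$ is symmetric (Lemma \ref{lemma: uniteqsym}) and behaves well under composition with conjugacies in the extendible setting (cf.\ Remark \ref{rmk: VerticalComp}); everything else is a formal translation of the hypotheses into the input of Theorem \ref{intertapproxuniteq}.
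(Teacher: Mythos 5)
Your proposal is correct and is essentially identical to the paper's own proof, which likewise applies Theorem \ref{intertapproxuniteq} to the constant systems $A_n=A$, $B_n=B$ with identity connecting maps, $\theta_n=\phi$ and $\kappa_n=\psi$, and reads off $(\Phi,H)\approx_u(\phi,h)$ and $(\Psi,L)\approx_u(\psi,l)$ from the ``moreover'' clause. Your additional remarks on identifying the constant inductive limits with $A$ and $B$ are just the bookkeeping the paper leaves implicit.
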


\begin{proof}
This is a direct application of Theorem \ref{intertapproxuniteq} with $A_n=A$, $B_n=B$, $\phi_n=\id_A$, $\psi_n=\id_B$, $k_n=\psi$, and $\theta_n=\phi$ for all $n\in\mathbb{N}$.
\end{proof}

\begin{rmk}
Note that in Corollary \ref{intertidentity}, if the approximate unitary equivalences are realised by unitaries in the minimal unitisations, then we may drop the assumption of extendibility on the connecting morphisms. This follows as extendibility in Theorem \ref{intertapproxuniteq} is only required to evaluate a morphism on a given unitary in the multiplier algebra. However, there is no issue in doing so when the unitary is in the minimal unitisation.
\end{rmk}

\subsection{Intertwining through reparametrisation}

Intertwining through reparametrisation is a type of intertwining argument commonly employed in the classification programme of $\C$-algebras and $\C$-dynamics. In broad terms, if we want to prove that a morphism $\theta:A\to B_\infty$ is unitarily equivalent to a morphism $\psi:A\to B_\infty$ which factors through $B$, then it suffices to check that $\theta$ is invariant under reparametrisations. This type of result appears for example in \cite{O2embthm,cocyclecategszabo} and it is used in successful classification results in \cite{O2embthm, DynamicalKP, ClassifMorphisms23}.
\par If $A$ is a separable $C^*$-algebra and $\eta: \mathbb{N}\to \mathbb{N}$ is any map with $\lim\limits_{n\to\infty}\eta(n)=\infty$, then it induces an endomorphism $\eta^*$ on $A_\infty$ via $\eta^*((a_n)_n)=(a_{\eta(n)})_n$. Moreover, if a C$^*$-tensor category $\cC$ acts on a C$^*$-algebra $A$ by a finite rank triple $(A,F,J)$, it follows from Lemma \ref{lemma: TensorFunctor} that there is an induced action on $A_\infty$ by the triple $(A_\infty, F_\infty, J^{\infty})$ (see also Remark \ref{rmk: FinGen}). 

Then, a straightforward checking of the conditions in Lemma \ref{linearmapspicture} shows that $\eta$ induces a cocycle morphism $(\eta^*, r):(A_\infty, F_\infty, J^{\infty})\to (A_\infty, F_\infty, J^{\infty})$, where $r^X:F_\infty(X)\to F_\infty(X)$ is given by $r^X((\xi_n)_n)=((\xi_{\eta(n)})_n)$ for any $X\in \cC$ and any $(\xi_n)_n\in F_\infty(X)$. Using this construction, we will prove an intertwining argument concerning maps into sequence algebras. First we need a preparatory lemma.

\begin{lemma}\label{lemma: IntThroughRep}
Let $(F,J): \cC\curvearrowright A$ and $(G,I): \cC\curvearrowright B$ be actions of a C$^*$-tensor category $\cC$ on separable $\C$-algebras with $B$ unital and $(G,I)$ finite rank. Suppose that $(\phi,h):(A,F,J)\to (B_\infty, G_\infty, I^{\infty})$ is a cocycle morphism such that for any map $\eta: \mathbb{N}\to \mathbb{N}$ with $\lim\limits_{n\to\infty}\eta(n)=\infty$, the cocycle morphisms $(\phi,h)$ and $(\eta^*,r)\circ(\phi,h)$ are approximately unitarily equivalent. For each $X\in\cC$, let $(h_n^X)_n\colon F(X)\rightarrow l^{\infty}(\N,G(X))$ be a linear lift of $h^X$. Then, for every finite set $K\subseteq \Irr(\cC)$ containing $1_{\cC}$, finite sets $\mathcal{F}^X\subseteq F(X)$ for $X\in K$, $\epsilon>0$, and $m\in\mathbb{N}$, there is an integer $k\geq m$ such that for every integer $n\geq k$ there is a unitary $u\in B$ for which $$\|u\rhd h_n^X(x)\lhd u^* - h_k^X(x)\|<\epsilon, \quad X\in K, \quad x\in\mathcal{F}^X.$$
\end{lemma}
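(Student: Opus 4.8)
The plan is to argue by contradiction, using a single carefully built reparametrisation $\eta$ to witness a failure of the hypothesis. Suppose the conclusion fails for some finite $K\subseteq\Irr(\cC)$ containing $1_\cC$, finite sets $\{\mathcal{F}^X\}_{X\in K}$, some $\epsilon>0$, and some $m$. Negating the statement: for every $k\geq m$ there is an index $n(k)\geq k$ such that for \emph{every} unitary $u\in B$ one has $\max_{X\in K}\max_{x\in\mathcal{F}^X}\|u\rhd h_{n(k)}^X(x)\lhd u^*-h_k^X(x)\|\geq\epsilon$. Taking $u=1_B$ shows $n(k)>k$ strictly. I would then build recursively a strictly increasing sequence $m=k_0<k_1<\cdots$ by setting $k_{i+1}:=n(k_i)$, and define $\eta\colon\mathbb{N}\to\mathbb{N}$ by $\eta(k_i)=k_{i+1}$ for all $i$ and $\eta(n)=n$ otherwise. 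Since $\eta(n)\geq n$ for every $n$, we have $\lim_{n\to\infty}\eta(n)=\infty$, so $\eta$ is an admissible reparametrisation and the standing hypothesis applies to it.

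Next I would unpack what the hypothesis provides. The composite $(\eta^*,r)\circ(\phi,h)$ has associated linear maps $l^X:=r^X\circ h^X$, which by the formula for $r$ send $x\mapsto[(h_{\eta(n)}^X(x))_n]$. By Lemma \ref{lemma: approxunitconj} together with Lemma \ref{lemma: uniteqsym}, approximate unitary equivalence of $(\phi,h)$ and $(\eta^*,r)\circ(\phi,h)$ yields, for the finite data $(K,\{\mathcal{F}^X\},\epsilon/2)$, a unitary $w\in\mathcal{U}(B_\infty)$ with $\max_{X\in K}\max_{x\in\mathcal{F}^X}\|l^X(x)-w\rhd h^X(x)\lhd w^*\|<\epsilon/2$. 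Here I use that $B$ is unital, so $B_\infty$ is unital and $\mathcal{M}(B_\infty)=B_\infty$, whence $w$ may be taken in $B_\infty$ itself.

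The crucial technical step is to transfer this sequence-algebra estimate to a coordinatewise one. Since $B$ is unital, $w$ lifts to a sequence $(u_n)_n$ of unitaries in $B$ (take any contractive lift and correct it by functional calculus for the large indices where it is close to unitary). Because the bimodule structure and right inner product on $G_\infty(X)$ are defined coordinatewise, $w\rhd h^X(x)\lhd w^*=[(u_n\rhd h_n^X(x)\lhd u_n^*)_n]$, and the norm on $G_\infty(X)$ of a represented element equals the $\limsup$ of the coordinatewise norms. As the maximum runs over the finite index set $\{(X,x):X\in K,\,x\in\mathcal{F}^X\}$, one may interchange $\max$ and $\limsup$, obtaining an $N$ such that $\|h_{\eta(n)}^X(x)-u_n\rhd h_n^X(x)\lhd u_n^*\|<\epsilon/2$ for all $n\geq N$, all $X\in K$, and all $x\in\mathcal{F}^X$.

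Finally I would derive the contradiction by specialising to $n=k_i$ for an $i$ large enough that $k_i\geq N$. Then $\eta(k_i)=k_{i+1}$, so $\|h_{k_{i+1}}^X(x)-u_{k_i}\rhd h_{k_i}^X(x)\lhd u_{k_i}^*\|<\epsilon/2$; applying the norm-preserving map $z\mapsto u_{k_i}^*\rhd z\lhd u_{k_i}$ (an isometry, as in Lemma \ref{lemma: linearmapsAd(u)}) produces the single unitary $u=u_{k_i}^*\in B$ with $\max_{X\in K}\max_{x\in\mathcal{F}^X}\|u\rhd h_{k_{i+1}}^X(x)\lhd u^*-h_{k_i}^X(x)\|<\epsilon/2<\epsilon$. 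This contradicts the negated statement applied with $k=k_i$ and $n(k_i)=k_{i+1}$, which asserted that no such unitary exists. I expect the main obstacle to be the bookkeeping around lifting $w$ to coordinatewise unitaries and the identification of the $G_\infty(X)$-norm with a $\limsup$ (so that a uniform-in-$X,x$ small distance in $B_\infty$ becomes a genuine pointwise estimate), together with checking that the constructed $\eta$ is admissible; once these are in place, the contradiction is immediate.
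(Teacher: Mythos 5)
Your proposal is correct and follows essentially the same route as the paper: argue by contradiction, assemble the bad indices into a reparametrisation $\eta$, invoke the approximate unitary equivalence hypothesis, lift the unitary from $B_\infty$ to a sequence of unitaries in $B$, and read off a coordinatewise estimate via the $\limsup$ description of the norm. The only (harmless) difference is that you define $\eta$ along a recursively chosen subsequence $k_i$ and set it to the identity elsewhere, whereas the paper simply takes $\eta(k)=n_k$ for all $k\geq m$.
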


\begin{proof}
We prove this by contradiction. Suppose that there exists a finite set $K\subseteq \Irr(\cC)$ containing $1_{\cC}$, finite sets $\mathcal{F}^X\subseteq F(X)$ for $X\in K$, $\epsilon>0$, and $m\in\mathbb{N}$ such that for every $k\geq m$, there exists $n_k\geq k$ for which 
\begin{equation}\label{eq: IntRep1}
\max\limits_{x\in \mathcal{F}^X}\|u_k\rhd h_{n_k}^X(x)\lhd u_k^*-h_k^X(x)\|\geq \epsilon,
\end{equation}
for any $X\in K$, $x\in \mathcal{F}^X$, and every unitary $u_k\in B$. Let $\eta:\mathbb{N}\to\mathbb{N}$ be the map $\eta(k)=n_k$ whenever $k\geq m$ and $\eta(k)=1$ for $k<m$. As $\eta(k)\geq k$ for $k\geq m$, it follows that $\lim\limits_{k\to\infty}\eta(k)=\infty$. Moreover, $(\phi,h)$ and $(\eta^*,r)\circ(\phi,h)$ are approximately unitarily equivalent, so there exists a unitary $u\in B_\infty$ for which $$\|u\rhd r^X(h^X(x))\lhd u^* - h^X(x)\|<\epsilon, \quad X\in K, \quad x\in\mathcal{F}^X.$$ If we let $(u_k)_{k\geq 1}$ to be a sequence of unitaries lifting $u$, then $$\limsup\limits_{k\to\infty}\|u_k\rhd h_{n_k}^X(x)\lhd u_k^*-h_k^X(x)\|<\epsilon$$ for all $X\in K$ and $x\in \mathcal{F}^X$. But this contradicts \eqref{eq: IntRep1}, so we reach the conclusion.
\end{proof}

\begin{theorem}\label{thm: IntThroughRepar}
Let $\cC$ be a semisimple C$^*$-tensor category with countably many isomorphism classes of simple objects. Let $(F,J): \cC\curvearrowright A$ and $(G,I): \cC\curvearrowright B$ be actions on separable $\C$-algebras with $B$ unital and $(G,I)$ finite rank. Let $(\phi,h):(A,F,J)\to (B_\infty, G_\infty, I^{\infty})$ be a cocycle morphism. Then the following are equivalent:

\begin{enumerate}
    \item $(\phi,h)$ is unitarily equivalent to a cocycle morphism $(\psi,l):(A,F,J)\to (B,G,I)$;\footnote{We view $\psi$ as a cocycle morphism into $B_\infty$ after composing with the canonical inclusion $\iota:B\rightarrow B_{\infty}$, which itself is canonically a cocycle morphism.}
    \item for any map $\eta: \mathbb{N}\to \mathbb{N}$ with $\lim\limits_{n\to\infty}\eta(n)=\infty$, the cocycle morphisms $(\phi,h)$ and $(\eta^*,r)\circ(\phi,h)$ are approximately unitarily equivalent.
\end{enumerate}
\end{theorem}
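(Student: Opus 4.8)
The plan is to prove the two implications separately, with $(1)\Rightarrow(2)$ a short formal computation and $(2)\Rightarrow(1)$ a diagonal argument built on Lemma \ref{lemma: IntThroughRep}. Throughout I use that $B$ unital forces $B_\infty$ unital, so $\mathcal{M}(B_\infty)=B_\infty$.

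For $(1)\Rightarrow(2)$, suppose $\Ad(w)\circ(\phi,h)=(\psi,l)$ for a unitary $w\in\mathcal{U}(\mathcal{M}(B_\infty))$ with $(\psi,l)$ valued in the constant sequences. The key observation is that $(\eta^*,r)$ fixes constant sequences, so $(\eta^*,r)\circ(\psi,l)=(\psi,l)$. Writing $(\phi,h)=\Ad(w^*)\circ(\psi,l)$ and pushing $\eta^*$ past the inner automorphism, one checks at the level of the $^*$-homomorphism and, via Lemma \ref{lemma: linearmapsAd(u)}, of the linear maps, that $(\eta^*,r)\circ(\Ad(w^*),h_{w^*})=(\Ad(\eta^*(w^*)),h_{\eta^*(w^*)})\circ(\eta^*,r)$. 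Hence $(\eta^*,r)\circ(\phi,h)=\Ad(\eta^*(w^*))\circ(\psi,l)$, which is unitarily equivalent to $(\psi,l)$, and therefore to $(\phi,h)$. Since unitary equivalence is a special case of $\approx_u$ and the latter is an equivalence relation (Lemma \ref{lemma: uniteqsym}), we conclude $(\phi,h)\approx_u(\eta^*,r)\circ(\phi,h)$.

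For $(2)\Rightarrow(1)$, first fix lifts $(h_n^X)_n$ of $h^X$ for each $X\in\cC$ (so in particular $(h_n^{1_\cC})_n=:(\phi_n)_n$ lifts $\phi$); these need not be cocycle morphisms, but the defects in the conditions of Lemma \ref{linearmapspicture} vanish as $n\to\infty$ because $(\phi,h)$ is a genuine cocycle morphism into $B_\infty$. Choose an increasing exhaustion $K_1\subseteq K_2\subseteq\cdots$ of $\Irr(\cC)$ by finite sets with $1_\cC\in K_1$, finite sets $\mathcal{F}_j^X\subseteq F(X)$ for $X\in K_j$ whose union is dense, and $\epsilon_j=2^{-j}$. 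Applying Lemma \ref{lemma: IntThroughRep} inductively yields a strictly increasing sequence $k_1<k_2<\cdots$ such that for each $j$ and each $n\geq k_j$ there is a unitary $u_j(n)\in B$ with $\|u_j(n)\rhd h_n^X(x)\lhd u_j(n)^*-h_{k_j}^X(x)\|<\epsilon_j$ for all $X\in K_j$, $x\in\mathcal{F}_j^X$. Setting $u_j:=u_j(k_{j+1})$ and $v_j:=u_1\cdots u_{j-1}$ (with $v_1=1$), isometry of $\Ad(v_j)$ gives $\|\Ad(v_{j+1})(h_{k_{j+1}}^X(x))-\Ad(v_j)(h_{k_j}^X(x))\|<\epsilon_j$ on $\mathcal{F}_j^X$. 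Thus for fixed $X$ and $x\in\bigcup_j\mathcal{F}_j^X$ the sequence $g_j^X(x):=\Ad(v_j)(h_{k_j}^X(x))$ is eventually Cauchy and converges to some $l^X(x)\in G(X)$; the asymptotic form of condition \ref{item:linearmap3} bounds $\|g_j^X(x)\|$ by $\|x\|+o(1)$, so $l^X$ extends to a contractive map $F(X)\to G(X)$ and $\psi:=l^{1_\cC}$ is the candidate $^*$-homomorphism $A\to B$. To obtain an honest implementing unitary I interpolate: for $n\in[k_j,k_{j+1})$ put $w_n:=v_j\,u_j(n)$ (and $w_n=1$ for $n<k_1$); writing $j(n)$ for the index with $n\in[k_{j(n)},k_{j(n)+1})$, one has $\Ad(w_n)(h_n^X(x))$ within $\epsilon_{j(n)}$ of $g_{j(n)}^X(x)$, so $\Ad(w_n)(h_n^X(x))\to l^X(x)$. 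Hence $w:=(w_n)_n\in\mathcal{U}(B_\infty)$ satisfies $\Ad(w)\circ(\phi,h)=(\psi,l)$ on the dense sets, and by continuity everywhere.

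It remains to check that $(\psi,l)$ is an exact cocycle morphism $(A,F,J)\to(B,G,I)$, which is where the main difficulty lies: the lifts $(\phi_{k_j},\{h_{k_j}^X\})$ satisfy conditions \ref{item:linearmap1}--\ref{item:linearmap5} of Lemma \ref{linearmapspicture} only asymptotically. Since the inner automorphisms $\Ad(v_j)$ preserve all these relations exactly, the defects for $(\Ad(v_j)\circ\phi_{k_j},g_j)$ equal those for $(\phi_{k_j},\{h_{k_j}^X\})$ and vanish as $j\to\infty$; passing to the limit, using Remark \ref{rmk: IrredDetLinearMaps} to restrict attention to $\Irr(\cC)$ and that $I^\infty,J$ restrict to $I,J$ on constant sequences, shows $(\psi,l)$ satisfies \ref{item:linearmap1}--\ref{item:linearmap5} exactly. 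This produces $(\psi,l):(A,F,J)\to(B,G,I)$ with $(\phi,h)$ unitarily equivalent to it, completing $(2)\Rightarrow(1)$. The genuinely delicate points are the telescoping that simultaneously yields norm-convergence of $g_j^X$ and the globally defined unitary $w$, and the verification that the asymptotic relations close up in the limit.
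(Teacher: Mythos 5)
Your proof is correct and follows the same skeleton as the paper's. The forward direction is the same formal computation: $(\eta^*,r)$ fixes morphisms valued in constant sequences and intertwines inner automorphisms up to replacing the unitary $w$ by $\eta^*(w)$, so $(\eta^*,r)\circ(\phi,h)=\Ad(\eta^*(w^*)w)\circ(\phi,h)$ is exactly (hence approximately) unitarily equivalent to $(\phi,h)$. The converse is the same recursive application of Lemma \ref{lemma: IntThroughRep}, producing a telescoping sequence of unitaries $v_j$ and the limit maps $l^X=\lim_j \Ad(v_j)\circ h_{k_j}^X$. You diverge from the paper only in how the argument is closed. The paper observes that, with $v=(v_n)_n\in\mathcal{U}(B_\infty)$ and the reparametrisation $\eta(n)=k_n$, one has the exact identity $(\psi,l)=(\Ad(v),h_v)\circ(\eta^*,r)\circ(\phi,h)$; this makes $(\psi,l)$ a cocycle morphism for free, being a composition of cocycle morphisms, and sidesteps any discussion of lifts. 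You instead verify conditions \ref{item:linearmap1}--\ref{item:linearmap5} of Lemma \ref{linearmapspicture} by hand through an asymptotic-defect argument. That route does work (the defects vanish because $(\phi,h)$ is a genuine cocycle morphism, and $\Ad(v_j)$, being implemented by a unitary multiplier, commutes with the bimodule operations, with the intertwiners $G(f)$, and with $I_{X,Y}$), but it forces you to handle issues the composition identity avoids entirely, e.g.\ that the chosen lifts $h_n^X$ need not be linear or exactly compatible with the inner products, so you should at least note that those defects also vanish asymptotically. On the other hand, your interpolated unitary $w_n=v_j u_j(n)$ for $n\in[k_j,k_{j+1})$ is a genuine gain in precision: it exhibits the \emph{exact} unitary equivalence $\Ad(w)\circ(\phi,h)=(\psi,l)$ demanded by statement (i), whereas the paper's write-up establishes exact unitary equivalence only to $(\eta^*,r)\circ(\phi,h)$ and then concludes with approximate unitary equivalence to $(\phi,h)$ by invoking hypothesis (ii) once more.
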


\begin{proof}
Let us first show that (i) implies (ii). Suppose that $u\in B_\infty$ is a unitary such that $(\Ad(u),h_u)\circ (\psi,l)=(\phi,h)$.\footnote{Recall the definition of the family of linear maps $h_u$ from Lemma \ref{lemma: linearmapsAd(u)}.} Let $\eta: \mathbb{N}\to \mathbb{N}$ be any map with $\lim\limits_{n\to\infty}\eta(n)=\infty$. Since $r^X\circ l^X=l^X$ for any $X\in \cC$, it follows that $$r^X\circ h^X= r^X\circ h_u^X\circ l^X= h_{\eta^*(u)}^X\circ r^X\circ l^X= h_{\eta^*(u)u^*}^X\circ h^X.$$ Since this holds for any $X\in \cC$, we get that $$(\Ad(\eta^*(u)u^*), h_{\eta^*(u)u^*})\circ (\phi, h)= (\eta^*,r)\circ (\phi, h).$$ Hence, $(\phi,h)$ and $(\eta^*,r)\circ(\phi,h)$ are unitarily equivalent. 

Suppose now that $(\phi,h)$ and $(\eta^*,r)\circ(\phi,h)$ are approximately unitarily equivalent for any $\eta:\N\rightarrow \N$ such that $\eta(n)$ converges to infinity. Let $K_n\subseteq \Irr(\cC)$ be an increasing sequence of finite sets containing $1_{\cC}$ such that $\bigcup_{n=1}^{\infty}K_n=\Irr(\cC)$, and $\mathcal{F}_n^X\subseteq F(X)$ be an increasing sequence of finite sets such that $\bigcup_{n=1}^{\infty}\mathcal{F}_n^X$ is dense in $F(X)$ for any $X\in\Irr(\cC)$. Let $(h_n^X)_n:F(X)\rightarrow l^{\infty}(\N,G(X))$ be a linear lifting. Recursively applying Lemma \ref{lemma: IntThroughRep} to $K=K_n$, $\mathcal{F}^X=\mathcal{F}_n^X$, $\epsilon= \frac{1}{2^n}$, one may pick $k_0=1<k_1<k_2<\ldots$ and unitaries $u_n\in B$ for $n\in \N$ such that
\begin{equation}\label{eq: Cauchy}
\|u_n\rhd h_{k_n}^X(x)\lhd u_n^*-h_{k_{n-1}}^X(x)\|<\frac{1}{2^n}, \quad X\in K_n, \quad x\in \mathcal{F}_n^X.
\end{equation}

Let $v_n=u_1u_2\ldots u_n$, we claim that $l^X(x):=\lim\limits_{n\to\infty}v_n\rhd h_{k_n}^X(x)\lhd v_n^*$ is well-defined for any $X\in\Irr(\cC)$ and any $x\in F(X)$. First let $y\in \bigcup_{n\in \N}\F_n^X$ for $X\in \Irr(\cC)$ and $\varepsilon>0$. Then there exists $n\in \N$ such that $X\in K_n$, $y\in \F_n^X$ and $2^{-n}<\varepsilon$. In particular, for any $j>l\geq n$ we may use \eqref{eq: Cauchy} repeatedly, together with triangle inequality, to achieve that
\begin{align*}
\|v_j\rhd h_{k_j}^X(y)\lhd v_j^*-v_l\rhd h_{k_l}^X(y)\lhd v_l^*\| &\leq \sum\limits_{i=l+1}^j 2^{-i} <\epsilon.
\end{align*}
As $\varepsilon$ is arbitrary $v_n\rhd h_{k_n}^X(y)\lhd v_n^*$ is Cauchy for any $X\in\Irr(\cC)$ and any $y\in \bigcup_{n\in \N}\F_n^X$.  This implies that, letting $(v_n)_n=V\in \U(B_\infty)$, $\eta:\N\rightarrow \N$ be given by $\eta(n)=k_n$ and their induced cocycle morphisms by $(\Ad(V),h_V)$ and $(\eta^*,r)$, 
\[
(\Ad(V),h_V)\circ(\eta^*,r)\circ(\phi,h):(A,F,J)\rightarrow (B_\infty,G_\infty,I^\infty)
\]
is a cocycle morphism such that it sends a dense subspace of $F(X)$ into the closed subspace $G(X)\subset G(X)_\infty$ for each $X\in \Irr(\cC)$. Thus 
\[
(h_V)^X\circ r^X\circ h^X:F(X)\to G(X)\subset G(X)_\infty
\]
by continuity for any $X\in \Irr(\cC)$.
This precisely shows that \[\lim_{n\rightarrow\infty} v_n\rhd h_{k_n}^X(x)\lhd v_n^*\] exists for all $x\in F(X)$. Let $\psi:A\to B$ be the linear map given by $\psi=l^{1_\cC}$. By construction we have that 
$$(\psi,l)=(\Ad(V),h_V)\circ (\eta^*,r)\circ (\phi,h)$$ 
for any $X\in\Irr(\cC)$.\ Hence, $\psi$ is a $^*$-homomorphism and the pair $(\psi,l)$ induces a cocycle morphism.\ Extending by linearity, we can define $l^X$ in the same way for any $X\in \cC$ and it is straightforward to see that these maps are linear.\ Moreover, $(\psi, l)$ is approximately unitarily equivalent to $(\phi,h)$. Finally, to show that the approximate unitary equivalence of $(\psi,l)$ and $(\phi,h)$ implies unitary equivalence follows as in \cite[Lemma~4.1]{O2embthm}.
\end{proof}

\begin{rmk}
Note that in Theorem~\ref{thm: IntThroughRepar}, if the approximate unitary equivalences are realised by unitaries in the minimal unitisations, then we may drop the assumption of unitality on $B$.
Moreover, as noted in Remark \ref{rmk: FinGen}, the assumption that $G(X)$ are finite rank in Theorem \ref{thm: IntThroughRepar} is immediate whenever the acting category $\cC$ is a unitary tensor category and $B$ is unital.
\end{rmk}

\section{One sided intertwining arguments}\label{section: onesidedintert}

We start this section by showing a tensor category equivariant adaptation of the classical one-sided intertwining argument (see \cite[Proposition 2.3.5]{rordambook}). A group equivariant version of this result can be found in \cite[Proposition 4.3]{cocyclecategszabo}. We end this section by proving an asymptotic Elliott two-sided intertwining (see Theorem \ref{thm: AsymptIntertIdentity}). First note that as $\Ad(v)$ is an isometric $^*$-homomorphism for any unitary $v\in \U(\M(A))$, injectivity of a cocycle morphism, which coincides with the injectivity of it's underlying $^*$-homomorphism, is preserved under both approximate and asymptotic unitary equivalence.

\begin{theorem}\label{thm: onesidedintert}
    Let $\cC$ be a semisimple C$^*$-tensor category with countably many isomorphism classes of simple objects.\ Let $(F,J):\cC\curvearrowright A$ and $(G,I):\cC\curvearrowright B$ be actions on separable C$^*$-algebras and $(\varphi,h):(A,F,J)\rightarrow (B,G,I)$ an injective cocycle morphism.\ Then $(\varphi,h)$ is asymptotically unitarily equivalent to a cocycle conjugacy if and only if:
    \par For all $\varepsilon>0$ and finite sets $K\subset\Irr(\cC)$ containing $1_{\cC}$, $\F^X\subset F(X)$ and $\G^X\subset G(X)$ there exists a strictly continuous path $z:[0,1]\rightarrow \U(\M(B))$ with $z_0=1$ such that
    \begin{enumerate}[label=\textbf{A.\arabic*}]
        \item $\sup\limits_{0\leq t\leq 1}\|z_t\rhd h^X(x)\lhd z_t^*-h^X(x)\|\leq \varepsilon$ for all $X\in K, x\in \F^X$,\label{item:1}
        \item $\dist(z_1^*\rhd y\lhd z_1,h^X(F(X)))\leq \varepsilon$ for all $X\in K$ and $y\in \G^X$.\footnote{Note that the injectivity assumption is redundant for the only if statement as it follows from the hypothesis. However, this assumption it is still required for the if direction.}\label{item:2}        
    \end{enumerate}
\end{theorem}
\begin{proof}
The proof of this Theorem will follow the proof of \cite[Proposition 4.3]{cocyclecategszabo} closely. We start by showing the ``only if" statement. Let $(\Phi,H)$ be a cocycle conjugacy such that $(\Phi,H)\asym (\varphi,h)$. By Lemma \ref{lemma: asymptunitconj} there exists a strictly continuous map $w:[0,\infty)\rightarrow \U(\M(B))$ such that
\begin{equation}\label{eqn:asym1}
    H^X(x)=\lim\limits_{t\rightarrow \infty}w_t\rhd h^X(x)\lhd w_t^*
\end{equation}
for all $X\in \Irr(\cC)$ and $x\in F(X)$. Therefore, for any finite sets $K\subset\Irr(\cC)$ containing $1_{\cC}$, $\F^X\subset F(X)$, and $\G^X\subset G(X)$ we may choose $n_1\geq 1$ sufficiently large such that 
\begin{equation}\label{eqn:asym2}
 \sup\limits_{t\geq n_1}\|H^X(x)-w_t\rhd h^X(x)\lhd w_t^*\|\leq \varepsilon/2, \ X\in K,\ x\in \F^X.
\end{equation}
Similarly, one may pick $n_2>n_1$ such that
\begin{equation}\label{eqn:asym3}
 \sup\limits_{t\geq n_2}\|H^X(x)-w_t\rhd h^X(x)\lhd w_t^*\|\leq \varepsilon, \ X\in K, \ x\in (H^X)^{-1}(w_{n_1}\rhd\G^X\lhd w_{n_1}^*).
\end{equation}
We claim that the unitary path $z_t=w_{n_1}^*w_{(1-t)n_1+tn_2}$ satisfies \ref{item:1} and \ref{item:2}. Indeed, it is a strictly continuous path $z:[0,1]\rightarrow \U(\M(B))$ with $z_0=1$. Moreover, for $X\in K$, $t\in [0,1]$ and $x\in \F^X$, using (\ref{eqn:asym2}) we have that
\begin{align*}
\|h^{X}(x)-z_t&\rhd h^X(x)\lhd z_t^*\|\\
&=\|h^{X}(x)-w_{n_1}^*w_{(1-t)n_1+tn_2}\rhd h^X(x)\lhd w_{(1-t)n_1+tn_2}^*w_{n_1}\|\\
&=\|w_{n_1}\rhd h^X(x)\lhd w_{n_1}^*-w_{(1-t)n_1+tn_2}\rhd h^X(x)\lhd w_{(1-t)n_1+tn_2}^*\|\\
&\leq \|w_{n_1}\rhd h^X(x)\lhd w_{n_1}^*-H^X(x)\|\\
&\quad +\|H^X(x)-w_{(1-t)n_1+tn_2}\rhd h^X(x)\lhd w_{(1-t)n_1+tn_2}^*\|\\
&\leq\varepsilon.
\end{align*}
This shows condition \ref{item:1}. We now turn to condition \ref{item:2}. Let $X\in K$, $y\in \G^X$, and $x=(H^X)^{-1}(w_{n_1}\rhd y\lhd w_{n_1}^*)$. Then, we get that
\begin{align*}
    &\|z_1^*\rhd y\lhd z_1-h^X(x)\|\\
    &= \|w_{n_2}^*\rhd(w_{n_1}\rhd y\lhd w_{n_1}^*)\lhd w_{n_2}-h^X(x)\|\\
    &= \|H^X(H^X)^{-1}(w_{n_1}\rhd y\lhd w_{n_1}^*)-w_{n_2}\rhd h^X(x)\lhd w_{n_2}^*\|\\
    &=\|H^X(x)-w_{n_2}\rhd h^X(x)\lhd w_{n_2}^*\|\\
    &\stackrel{\eqref{eqn:asym3}} {\leq}\epsilon.
\end{align*}
As $x\in F(X)$, condition \ref{item:2} holds.
\par We now turn to the if direction. To prove this statement we will use \ref{item:1}-\ref{item:2} to construct a strictly continuous path of unitaries $v_t\in \mathcal{U}(\mathcal{M}(B))$ for $t\in[0,\infty)$ such that $(\Ad(v_t),h_{v_t})\circ (\varphi,h)$ converges to a cocycle conjugacy.
   \par Let $\{x_n^{X}\}_{n\in\mathbb{N}}\subset F(X)$, $\{y_n^X\}_{n\in\mathbb{N}}\subset G(X)$ be countable dense subsets for any $X\in\Irr(\cC)$ and $ K_n$ increasing finite subsets of $\Irr(\mathcal{C})$ containing $1_{\cC}$ such that $\Irr(\mathcal{C})=\bigcup_{n\in\N} K_n$. Firstly, use \ref{item:1}-\ref{item:2} to find $x_{1,1}^X\in F(X)$ for $X\in K_1$ and $z^{(1)}:[0,1]\rightarrow \U(\M(B))$ such that $z_0^{(1)}=1$ and for $0\leq t\leq 1$
\begin{enumerate}
   \item $\|z_t^{(1)}\rhd h^X(x_1^{X})\lhd (z_t^{(1)})^*-h^X(x_1^{X})\|\leq 1/2$ for $X\in K_1$,\\
   \item $\|(z_1^{(1)})^*\rhd y_1^{X}\lhd z_1^{(1)}-h^X(x_{1,1}^{X})\|\leq 1/2$ for $X\in K_1$.
\end{enumerate}
Again use \ref{item:1}-\ref{item:2} to find $x_{2,1}^X$, $x_{2,2}^X$ in $F(X)$ for $X\in K_2$, $z^{(2)}:[0,1]\rightarrow \U(\M(B))$ such that $z^{(2)}_0=1$ and for every $0\leq t\leq 1$
\begin{enumerate}
    \item $\|z_t^{(2)}\rhd h^X(x_j^{X})\lhd (z_t^{(2)})^*-h^X(x_j^{X})\|\leq 1/4$ for $X\in K_2$ and $1\leq j \leq 2$,\\
    \item $\|z_t^{(2)}\rhd h^X(x_{1,1}^{X})\lhd (z_t^{(2)})^*-h^X(x_{1,1}^{X})\|\leq 1/4$ for $X\in K_2$,\\
    \item $\|(z_1^{(2)})^*\rhd((z_1^{(1)})^*\rhd y_j^{X}\lhd z_1^{(1)})\lhd z_1^{(2)}-h^X(x_{2,j}^X)\|\leq 1/4$ for $X\in K_2$ and $1\leq j\leq 2$.
\end{enumerate}
Now suppose you have $z^{(k)}:[0,1]\rightarrow \U(\M(B))$ for $1\leq k\leq n$ with $z_0^{(k)}=1$ and $x_{m,j}\in F(X)$ for $X\in K_m$ with $1\leq j\leq m \leq n$ such that for any $t\in[0,1]$
\begin{enumerate}[label=\textbf{S.\arabic*}]
    \item $\|z_t^{(n)}\rhd h^X(x_j^{X})\lhd (z_t^{(n)})^*-h^X(x_j^{X})\|\leq 2^{-n}$ for $X\in K_n$ and $1\leq j \leq n$,\label{n-thstep1}\\
    \item $\|z_t^{(n)}\rhd h^X(x_{m,j}^{X})\lhd (z_t^{(n)})^*-h^X(x_{m,j}^{X})\|\leq 2^{-n}$ for $X\in K_n$ and $1\leq j \leq m<n$,\label{n-thstep2}\\
    \item $\lVert\left((z_1^{(n)})^*\ldots(z_1^{(1)})^*\rhd y_j^{X}\lhd z_1^{(1)}\ldots z_1^{(n)}\right)-h^X(x_{n,j}^X)\rVert\leq 2^{-n}$ for $X\in K_n$ and $1\leq j\leq n$.\label{n-thstep3}
\end{enumerate}
Then use \ref{item:1}-\ref{item:2} to get $\{x_{n+1,j}^X\}_{j\leq n+1}\in F(X)$ for $X\in K_{n+1}$ and $z^{(n+1)}:[0,1]\rightarrow \U(\M(B))$ such that for all $t\in[0,1]$
\begin{enumerate}
    \item $\|z_t^{(n+1)}\rhd h^X(x_j^{X})\lhd (z_t^{(n+1)})^*-h^X(x_j^{X})\|\leq 2^{-(n+1)}$ for $X\in K_{n+1}$ and $1\leq j \leq n+1$,\\
    \item $\|z_t^{(n+1)}\rhd h^X(x_{m,j}^{X})\lhd (z_t^{(n+1)})^*-h^X(x_{m,j}^{X})\|\leq 2^{-(n+1)}$ for $X\in K_{n+1}$ and $1\leq j \leq m<n+1$,\\
    \item $\|\left((z_1^{(n+1)})^*\ldots(z_1^{(1)})^*\rhd y_j^{X}\lhd z_1^{(1)}\ldots z_1^{(n+1)}\right)-h^X(x_{n+1,j}^X)\|\leq 2^{-(n+1)}$ for $X\in K_{n+1}$ and $1\leq j\leq n+1$.
\end{enumerate}
We carry on inductively to construct $z^{(n)}$ and $x_{m,j}^X$ for every $n,m\in \N$ and $j\leq m$ satisfying \ref{n-thstep1}-\ref{n-thstep3}.\ We may now define the path $v_t:[0,\infty)\rightarrow \U(\M(B))$ by $v_t=z_1^{(1)}\ldots z_1^{(n)}z_{t-n}^{(n+1)}$ for every $t\in [n,n+1]$. This path is strictly continuous on every open interval $(n,n+1)$ for $n\in \N$ as the paths $z^{(k)}$ are strictly continuous for each $k\in\N$. Moreover, the path $v_t$ is strictly continuous at each $n\in \N$ as $z_0^{(k)}=1$ for every $k\in \N$. Adjoining by $v_t$ we obtain a continuous path of cocycle morphisms $(\Ad(v_t),h_{v_t})\circ(\varphi,h)=(\psi_t,h_t)$ where $\psi_t=\Ad(v_t)\circ \varphi$ and $h_t^{X}=h_{v_t}^X\circ h^X$ for any $t\in[0,\infty)$ and $X\in \Irr(\cC)$. 

We claim that as $t\rightarrow \infty$ the path $\psi_t$ converges to an isomorphism $\Psi$ and that the path $h_t^X$ converges to a bijective linear map $H^X$ for all $X\in \Irr(\cC)$ such that the pair $(\Psi,H)$ induces a cocycle morphism (recall Remark \ref{rmk: IrredDetLinearMaps}).\ For any $X\in \Irr(\cC)$ and $j\in \N$ the net $(h_t^X(x_j^X))_{t\geq 0}$ is Cauchy by \ref{n-thstep1}.\ Since the set $\{x_j^X\}_{j\in \N}$ is dense in $F(X)$ for any $X\in \Irr(\cC)$ a standard triangle inequality argument shows that the net $(h_t^X(x))_{t\geq 0}$ converges for any $x\in F(X)$ and $X\in \Irr(\cC)$. We may hence define $H^X(x)=\lim_{t\rightarrow \infty}h_t^X(x)$ for all $x\in F(X)$ and $X\in \Irr(\cC)$. The maps $H^X:F(X)\rightarrow G(X)$ are linear as inherited by the linearity of $h_t^X$. In light of Remark \ref{rmk: IrredDetLinearMaps}, it suffices to check that the family of maps $\{H^X\}_{X\in\Irr(\cC)}$ satisfies the conditions \ref{item:linearmap2}-\ref{item:linearmap4} of Lemma \ref{linearmapspicture} to conclude that $(\Psi, H)$ is a cocycle morphism. They are easily verified as $(\psi_t,h_t)$ is a cocycle morphism for every $t\in [0,\infty)$ and one may approximate $H$ pointwise by $h_t$ for some large enough $t$. 
\par It remains to check that $H^X$ is bijective for every $X\in\cC$. By Remark \ref{rmk: IrredDetLinearMaps}, it suffices to check that $H^X$ is bijective for every $X\in\Irr(\cC)$. Firstly, each $H^X$ is isometric. To see this, note that $\Psi$ is an injective $^*$-homomorphism and hence isometric, so as $(\Psi,H)$ is a cocycle morphism 
\begin{align*}
    \|H^X(x)\|^2&=\|\langle H^X(x),H^X(x)\rangle_B\|\\
    &=\|\Psi(\langle x,x\rangle_A)\|\\
    &=\|\langle x,x\rangle_A\|\\
    &=\|x\|^2
\end{align*}
for any $X\in \cC$ and $x\in F(X)$.\ Therefore $H^X$ is injective for every $X\in \cC$. We now turn to surjectivity. As $H^X$ are isometric, it suffices to show that $H^X$ have dense image. Fix $X\in \Irr(\cC)$ and $j\in \N$. There is a large enough $n_0\in \N$ satisfying $n_0>j$ and $X\in K_n$ for all $n\geq n_0$. So by \ref{n-thstep3} $\|h_n^X(x_{n,j}^X)-y_j^{X}\|\leq 2^{-n}$ for any $n\geq n_0$. Moreover, by \ref{n-thstep2}
\begin{align*}
    \|H^X(x_{n,j}^X)-h_n^X(x_{n,j})\|&\leq \sum_{k=n}^\infty\|h_{k+1}^X(x_{n,j}^X)-h_{k}^X(x_{n,j}^X)\|\\
    &\leq\sum_{k=n}^\infty \|z_1^{(k+1)}\rhd h^X(x_{n,j}^X)\lhd (z_1^{(k+1)})^*-h^X(x_{n,j}^X)\|\\
    &\leq \sum_{k=n}^\infty 2^{-(k+1)}\\
    &= 2^{-n},
\end{align*}
for any $n\geq n_0$. So $\|y_j^{X}-H^X(x_{n,j}^X)\|\leq 2^{1-n}$. Therefore, as $n$ may be chosen arbitrarily and $\{y_j^{X}\}_{j\in\mathbb{N}}$ is dense in $G(X)$, it follows that $H^X$ is surjective.
\end{proof}

Theorem \ref{thm: onesidedintert} also holds in the setting of approximate unitary equivalence by replacing the path of unitaries with a single unitary and dropping the assumption that $z_0=\mathbf{1}$. 

\begin{theorem}\label{thm: onesidedintertApprox}
    Let $\cC$ be a semisimple C$^*$-tensor category with countably many isomorphism classes of simple objects.\ Let $(F,J):\cC\curvearrowright A$ and $(G,I):\cC\curvearrowright B$ be actions on separable C$^*$-algebras and $(\varphi,h):(A,F,J)\rightarrow (B,G,I)$ an injective cocycle morphism.\ Then $(\varphi,h)$ is approximately unitarily equivalent to a cocycle conjugacy if and only if:
    \par For all $\varepsilon>0$ and finite sets $K\subset\Irr(\cC)$ containing $1_{\cC}$, $\F^X\subset F(X)$ and $\G^X\subset G(X)$ there exists a unitary $z\in\mathcal{M}(B)$ such that
    \begin{enumerate}[label=\textbf{A.\arabic*}]
        \item $\|z\rhd h^X(x)\lhd z^*-h^X(x)\|\leq \varepsilon$ for all $X\in K$ and $x\in \F^X$,\label{item:1approx}
        \item $\dist(z^*\rhd y\lhd z,h^X(F(X)))\leq \varepsilon$ for all $X\in K$ and $y\in \G^X$.\label{item:2approx}        
    \end{enumerate}
\end{theorem}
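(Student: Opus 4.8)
The plan is to follow the proof of Theorem \ref{thm: onesidedintert} essentially verbatim, replacing each strictly continuous path of unitaries by a single unitary and dropping the normalisation $z_0=1$. The sequences of unitaries produced will yield an approximate (rather than asymptotic) unitary conjugacy, which by Lemma \ref{lemma: uniteqsym} automatically upgrades to approximate unitary equivalence. Throughout I will use repeatedly that, for any unitary $v\in\U(\M(B))$, the map $w\mapsto v\rhd w\lhd v^*$ is isometric on each $G(X)$: this is condition \textit{(iii)} of Lemma \ref{linearmapspicture} applied to the cocycle morphism $(\Ad(v),h_v)$ of Lemma \ref{lemma: linearmapsAd(u)}, giving $\langle v\rhd w\lhd v^*,v\rhd w\lhd v^*\rangle_B=v\langle w,w\rangle_B v^*$.

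For the \emph{only if} direction, suppose $(\Phi,H)$ is a cocycle conjugacy with $(\Phi,H)\au(\varphi,h)$. By Lemma \ref{lemma: approxunitconj} there is a net $u_\lambda\in\U(\M(B))$ with $u_\lambda\rhd h^X(x)\lhd u_\lambda^*\to H^X(x)$ uniformly over finite subsets of $\Irr(\cC)$. Given $\varepsilon,K,\F^X,\G^X$, I would first fix $\lambda_1$ so that $\|u_{\lambda_1}\rhd h^X(x)\lhd u_{\lambda_1}^*-H^X(x)\|$ is controlled on $\F^X$ and on the finite set $(H^X)^{-1}(u_{\lambda_1}\rhd\G^X\lhd u_{\lambda_1}^*)$ (this preimage is legitimate since $H^X$ is bijective, being the data of a cocycle conjugacy), and then fix $\lambda_2$ so that the same estimate holds on this enlarged set. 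Setting $z=u_{\lambda_1}^*u_{\lambda_2}$, the two triangle-inequality computations from the asymptotic proof (using the isometry property above) deliver \ref{item:1approx} and \ref{item:2approx} directly.

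For the \emph{if} direction, I would run the recursive construction of Theorem \ref{thm: onesidedintert}: fix countable dense sets $\{x_j^X\}\subset F(X)$, $\{y_j^X\}\subset G(X)$ and an exhausting sequence $K_n\uparrow\Irr(\cC)$, and build unitaries $z^{(n)}\in\U(\M(B))$ and points $x_{n,j}^X\in F(X)$ satisfying the analogues of \ref{n-thstep1}--\ref{n-thstep3} with $2^{-n}$ bounds. At stage $n+1$ one applies the hypothesis \emph{once}, with $K=K_{n+1}$, with $\F^X=\{x_j^X:j\le n+1\}\cup\{x_{m,j}^X:j\le m\le n\}$, and with $\G^X=\{v_n^*\rhd y_j^X\lhd v_n:j\le n+1\}$ where $v_n=z^{(1)}\cdots z^{(n)}$: here \ref{item:1approx} gives \ref{n-thstep1}--\ref{n-thstep2} for $z^{(n+1)}$, while \ref{item:2approx} supplies the new points $x_{n+1,j}^X$ together with \ref{n-thstep3} (since $(z^{(n+1)})^*\rhd(v_n^*\rhd y_j^X\lhd v_n)\lhd z^{(n+1)}=v_{n+1}^*\rhd y_j^X\lhd v_{n+1}$). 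Because $\Ad(v_n)$ is isometric, the telescoping identity
\[
\|v_{n+1}\rhd h^X(x)\lhd v_{n+1}^*-v_n\rhd h^X(x)\lhd v_n^*\|=\|z^{(n+1)}\rhd h^X(x)\lhd (z^{(n+1)})^*-h^X(x)\|\le 2^{-(n+1)}
\]
shows that $h_n^X:=h_{v_n}^X\circ h^X$ is Cauchy on the dense sets, hence converges to a linear map $H^X$; taking $X=1_{\cC}$ shows $\psi_n=\Ad(v_n)\circ\varphi$ converges to a $^*$-homomorphism $\Psi$.

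Finally I would verify that $(\Psi,\{H^X\})$ is a cocycle conjugacy. Since each $(\psi_n,h_n)$ is a cocycle morphism and $\Psi,H$ are pointwise limits, the conditions of Lemma \ref{linearmapspicture} pass to the limit, and by Remark \ref{rmk: IrredDetLinearMaps} it suffices to check these on $\Irr(\cC)$. Injectivity of $\varphi$ makes every $\psi_n$ and hence $\Psi$ isometric, so condition \textit{(iii)} forces each $H^X$ to be isometric; density of the range of $H^X$ follows from \ref{n-thstep3} combined with the telescoping bound $\|H^X(x_{n,j}^X)-h_n^X(x_{n,j}^X)\|\le 2^{1-n}$, exactly as in the asymptotic argument. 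By construction $H^X(x)=\lim_n v_n\rhd h^X(x)\lhd v_n^*$ uniformly over finite subsets of $\Irr(\cC)$, so Lemma \ref{lemma: approxunitconj} identifies the cocycle conjugacy $(\Psi,H)$ as an approximate unitary conjugate of $(\varphi,h)$, and Lemma \ref{lemma: uniteqsym} yields the desired approximate unitary equivalence. The proof presents no essential new difficulty beyond the asymptotic case; the only point requiring care is that, with no continuity constraint on the $z^{(n)}$, the single unitary furnished by the hypothesis at each stage must \emph{simultaneously} control the \ref{n-thstep1}--\ref{n-thstep2} estimates (through \ref{item:1approx}) and the \ref{n-thstep3} estimate (through \ref{item:2approx}), which is precisely what the joint assertion of \ref{item:1approx} and \ref{item:2approx} guarantees.
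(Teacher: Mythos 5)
Your proposal is correct and is essentially the paper's own argument: the paper does not write out a separate proof of this statement, but simply remarks that the proof of Theorem \ref{thm: onesidedintert} carries over verbatim once the strictly continuous paths are replaced by single unitaries and the normalisation $z_0=1$ is dropped, which is precisely the adaptation you spell out (including the correct choice of finite sets at each recursive stage and the appeal to Lemmas \ref{lemma: approxunitconj} and \ref{lemma: uniteqsym}). The only cosmetic slip is in the ``only if'' direction, where $\lambda_1$ should be chosen to control the estimate on $\F^X$ alone before forming the preimage set $(H^X)^{-1}(u_{\lambda_1}\rhd\G^X\lhd u_{\lambda_1}^*)$, with $\lambda_2$ then chosen for the enlarged set; as written the choice of $\lambda_1$ sounds circular, but your subsequent sentence makes the intended (and correct) order of quantifiers clear.
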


\begin{lemma}\label{lemma: AsymCocConj}
Let $\cC$ be a semisimple C$^*$-tensor category with countably many isomorphism classes of simple objects. Let $(F_n,J^{(n)}): \cC\curvearrowright A_n$ be a  sequence of actions on separable $\C$-algebras. Let
\[
(\phi_n,h_n): (A_n,F_n,J^{(n)}) \to (A_{n+1},F_{n+1},J^{(n+1)})
\] be a sequence of injective and extendible cocycle morphisms with inductive limit $(A,F,J)=\lim\limits_{\longrightarrow}(A_n,F_n,J^{(n)})$. Suppose that for every $n\geq 1$, $(\phi_n,h_n)$ is asymptotically unitarily equivalent to a cocycle conjugacy. Then it follows that $$(\phi_{1,\infty},h_{1,\infty}):(A_1,F_1,J^{(1)})\to (A,F,J)$$ is asymptotically unitarily equivalent to a cocycle conjugacy.
\end{lemma}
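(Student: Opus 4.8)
The plan is to deduce the statement from the one-sided intertwining theorem (Theorem \ref{thm: onesidedintert}) applied to the single morphism $(\phi_{1,\infty},h_{1,\infty})$. First I would record two preliminary facts. Since each $\phi_n$ is injective, every finite composite $\phi_{1,m}$ is isometric, so $\|\phi_{1,\infty}(a)\|=\lim_m\|\phi_{1,m}(a)\|=\|a\|$; hence $\phi_{1,\infty}$ is injective and Theorem \ref{thm: onesidedintert} is applicable (note $A$ is separable as a sequential inductive limit of separable algebras). Second, by induction on $m$ using Lemma \ref{lemma: CompConjIsConjAsympt} (whose hypotheses are met since both factors of $(\phi_m,h_m)\circ(\phi_{1,m},h_{1,m})$ are asymptotically unitarily equivalent to cocycle conjugacies), each truncated composite $(\phi_{1,m},h_{1,m})\colon (A_1,F_1,J^{(1)})\to (A_m,F_m,J^{(m)})$ is asymptotically unitarily equivalent to a cocycle conjugacy.

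The core of the argument is then to verify conditions A.1 and A.2 of Theorem \ref{thm: onesidedintert} for $(\phi_{1,\infty},h_{1,\infty})$. Fix $\varepsilon>0$, a finite $K\subset\Irr(\cC)$ containing $1_{\cC}$, and finite sets $\F^X\subset F_1(X)$, $\G^X\subset F(X)$. Since $\bigcup_m h_{m,\infty}^X(F_m(X))$ is dense in $F(X)$, I can choose $m$ large and finite sets $\G_m^X\subset F_m(X)$ so that $h_{m,\infty}^X(\G_m^X)$ is $\varepsilon/4$-dense in $\G^X$ for every $X\in K$. Applying the ``only if'' direction of Theorem \ref{thm: onesidedintert} to $(\phi_{1,m},h_{1,m})$ with tolerance $\delta:=\varepsilon/2$ and data $K$, $\F^X$, $\G_m^X$, I obtain a strictly continuous path $z^{(m)}\colon[0,1]\to\U(\M(A_m))$ with $z^{(m)}_0=1$ witnessing A.1 and A.2 at the level of $A_m$.

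It remains to transport $z^{(m)}$ to the limit. Using that $\phi_{m,\infty}$ is extendible (which I would verify: the connecting maps are extendible, composites of extendible maps are extendible since the unital extensions are strictly continuous on bounded sets, and the increasing net $\phi_{m,\infty}(e_\lambda)$ then converges strictly to a projection), I set $z_t:=\phi_{m,\infty}^{\dagger}(z^{(m)}_t)\in\U(\M(A))$, which is again strictly continuous with $z_0=1$. The decisive identity is the multiplier-level equivariance of Remark \ref{rmk: ExtLinMaps}, namely $h_{m,\infty}^X(u\rhd\xi\lhd u^*)=\phi_{m,\infty}^{\dagger}(u)\rhd h_{m,\infty}^X(\xi)\lhd\phi_{m,\infty}^{\dagger}(u)^*$ for $u\in\U(\M(A_m))$ and $\xi\in F_m(X)$. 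Combined with $h_{1,\infty}^X=h_{m,\infty}^X\circ h_{1,m}^X$ and the fact that $h_{m,\infty}^X$ is isometric (as $\phi_{m,\infty}$ is injective, via condition (iii) of Lemma \ref{linearmapspicture}), this yields
$$\|z_t\rhd h_{1,\infty}^X(x)\lhd z_t^*-h_{1,\infty}^X(x)\|=\|z^{(m)}_t\rhd h_{1,m}^X(x)\lhd (z^{(m)}_t)^*-h_{1,m}^X(x)\|\le\delta\le\varepsilon,$$
which gives A.1. An analogous computation, approximating each $y\in\G^X$ by $h_{m,\infty}^X(y_m)$ with $y_m\in\G_m^X$ and using $\dist\big((z^{(m)}_1)^*\rhd y_m\lhd z^{(m)}_1,\,h_{1,m}^X(F_1(X))\big)\le\delta$, gives A.2 with total error $\varepsilon/4+\delta\le\varepsilon$. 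Theorem \ref{thm: onesidedintert} then produces the desired cocycle conjugacy asymptotically unitarily equivalent to $(\phi_{1,\infty},h_{1,\infty})$.

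The main obstacle I anticipate is precisely the multiplier-algebra bookkeeping in the last step: one must confirm that $\phi_{m,\infty}$ is extendible so that $\phi_{m,\infty}^{\dagger}$ exists and is strictly continuous along the unitary path, and that the equivariant structure is compatible with this extension via Remark \ref{rmk: ExtLinMaps}. Everything else reduces to a routine density-and-triangle-inequality argument once the approximation $h_{m,\infty}^X(\G_m^X)\approx\G^X$ is in place.
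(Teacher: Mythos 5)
Your proposal is correct and follows essentially the same route as the paper: reduce to the finite stages $(\phi_{1,m},h_{1,m})$ via Lemma \ref{lemma: CompConjIsConjAsympt}, invoke the ``only if'' direction of Theorem \ref{thm: onesidedintert} at stage $m$, and transport the unitary path to $\mathcal{M}(A)$ via $\phi_{m,\infty}^{\dagger}$ together with Remark \ref{rmk: ExtLinMaps}. The only (harmless) differences are bookkeeping: you approximate $\G^X$ by $h_{m,\infty}^X(\G_m^X)$ up to $\varepsilon/4$ where the paper perturbs to exact equality, and you explicitly verify injectivity of $\phi_{1,\infty}$ and extendibility of $\phi_{m,\infty}$, which the paper leaves implicit.
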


\begin{proof}
Let $\varepsilon>0$ and finite sets $K\subset\Irr(\cC)$ containing $1_{\cC}$, $\F_1^X\subset F_1(X),\ \F^X\subset F(X)$. We will check the conditions in Theorem \ref{thm: onesidedintert} for $(\phi_{1,\infty},h_{1,\infty})$. Perturbing $\F^X$ by an arbitarily small tolerance, we may assume that there exists $n\geq 1$ large enough and a finite set $\F_n^X\subset F_n(X)$ such that $\F^X=h_{n,\infty}^X(\F_n^X)$ for any $X\in K$.

By Lemma \ref{lemma: CompConjIsConjAsympt}, $(\phi_{1,n},h_{1,n})$ is asymptotically unitarily equivalent to a cocycle conjugacy. Then, by Theorem \ref{thm: onesidedintert}, there exists a unitary path $y:[0,1]\to \mathcal{U}(\mathcal{M}(A_n))$ such that $y_0=1$ and
 \begin{enumerate}
        \item $\sup\limits_{0\leq t\leq 1}\|y_t\rhd h_{1,n}^X(\mu)\lhd y_t^*-h_{1,n}^X(\mu)\|\leq\varepsilon$ for any $X\in K, \mu\in \F_1^X$,
        \item $\dist(y_1^*\rhd\eta\lhd y_1, h_{1,n}^X(F_1(X)))\leq\varepsilon$ for any $X\in K$, $\eta\in \F_n^X$ .
\end{enumerate}

Let $z_t=\phi_{n,\infty}^{\dagger}(y_t)$ be a unitary in $\mathcal{M}(A)$ for any $t\in[0,1]$, where $\phi_{n,\infty}^{\dagger}:\U(\mathcal{M}(A_n))\to \U(\mathcal{M}(A)) $ is as in Definition \ref{defn: extendible}. Now we claim that the path of unitaries $z_t$ satisfies the conditions of Theorem \ref{thm: onesidedintert} for $(\phi_{1,\infty},h_{1,\infty})$. 

By \ref{item:linearmap1} and \ref{item:linearmap3} of Lemma \ref{linearmapspicture} (see also Remark \ref{rmk: ExtLinMaps}) we get that
\begin{equation*}
\|z_t\rhd h_{1,\infty}^X(\mu)\lhd z_t^*-h_{1,\infty}^X(\mu)\|\leq \|y_t\rhd h_{1,n}^X(\mu)\lhd y_t^*-h_{1,n}^X(\mu)\|\leq\varepsilon
\end{equation*}
for any $X\in K$, $\mu\in \F_1^X$, and any $t\in[0,1]$.\ Moreover, recall that $\F^X=h_{n,\infty}^X(\F_n^X)$ and $$\dist(y_1^*\rhd\eta\lhd y_1, h_{1,n}^X(F_1(X)))\leq\varepsilon$$ for any $X\in K$ and any $\eta\in \F_n^X$.\ Hence, by applying $h_{n,\infty}^X$ to the formula above, it follows that for any $X\in K$, $\xi\in \F^X$, $$\dist(z_1^*\rhd\xi\lhd z_1, h_{1,\infty}^X(F_1(X)))\leq\varepsilon.$$ Therefore, $(\phi_{1,\infty},h_{1,\infty})$ is asymptotically unitarily equivalent to a cocycle conjugacy by Theorem \ref{thm: onesidedintert}.
\end{proof}

The following result can be seen as the asymptotic version of Corollary \ref{intertidentity}.

\begin{theorem}\label{thm: AsymptIntertIdentity}
 Let $\cC$ be a semisimple C$^*$-tensor category with countably many isomorphism classes of simple objects. Let $(F,J): \cC\curvearrowright A$ and $(G,I): \cC\curvearrowright B$ be actions on separable $\C$-algebras. Let
\[
(\phi, h): (A,F,J) \to (B,G,I) \quad
and \quad
(\psi, l): (B,G,I) \to (A,F,J)
\] be two extendible cocycle morphisms such that $$\id_A\cong_u (\psi,l)\circ (\phi, h)\quad and \quad \id_B\cong_u (\phi,h)\circ (\psi, l).$$ Then there exist mutually inverse cocycle conjugacies \[
(\Phi, H): (A,F,J) \to (B,G,I) \quad
and \quad
(\Psi, L): (B,G,I) \to (A,F,J)
\] such that $$(\Phi, H)\cong_u (\phi,h) \quad and \quad (\Psi, L)\cong_u (\psi,l).$$
\end{theorem}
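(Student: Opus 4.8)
The plan is to realise both $(\phi,h)$ and $(\psi,l)$ as canonical embeddings into a common inductive limit, and then to read off the desired conjugacies using Lemma~\ref{lemma: AsymCocConj}. First, evaluating the two hypotheses at $X=1_{\cC}$ shows that $\psi\circ\phi$ and $\phi\circ\psi$ are isometric, so $\phi$ and $\psi$ are injective $^*$-homomorphisms, hence isometric. Consequently the cocycle morphisms $(\psi,l)\circ(\phi,h)$ and $(\phi,h)\circ(\psi,l)$ (computed via Lemma~\ref{compositionlinearmaps}) are injective; being composites of extendible $^*$-homomorphisms they are also extendible, since the strict unital extension $\psi^{\dagger}$ is strictly continuous on bounded sets and therefore sends the strict limit of an approximate unit to a projection. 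By hypothesis each of these composites is asymptotically unitarily equivalent to the relevant identity, which is a cocycle conjugacy.

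Next I would form in $\C_{\cC}$ the inductive limit $(D,F_D,J_D)$ of the alternating telescope
\[
(A,F,J)\xrightarrow{(\phi,h)}(B,G,I)\xrightarrow{(\psi,l)}(A,F,J)\xrightarrow{(\phi,h)}(B,G,I)\to\cdots,
\]
with canonical cocycle morphisms $\mu_A\colon(A,F,J)\to(D,F_D,J_D)$ and $\mu_B\colon(B,G,I)\to(D,F_D,J_D)$ coming from the first two stages, so that $\mu_A=\mu_B\circ(\phi,h)$. The odd stages form a cofinal subsystem whose connecting maps are all $(\psi,l)\circ(\phi,h)\cong_u\id_A$, and the even stages a cofinal subsystem with connecting maps $(\phi,h)\circ(\psi,l)\cong_u\id_B$; by cofinality both subsystems have limit $(D,F_D,J_D)$ with the same canonical maps. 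Applying Lemma~\ref{lemma: AsymCocConj} to these two subsystems (whose connecting maps are injective, extendible, and each $\cong_u$ a cocycle conjugacy) produces cocycle conjugacies $\tilde\mu_A\colon A\to D$ and $\tilde\mu_B\colon B\to D$ with $\tilde\mu_A\cong_u\mu_A$ and $\tilde\mu_B\cong_u\mu_B$.

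I would then set $(\Phi,H):=\tilde\mu_B^{-1}\circ\tilde\mu_A$ and $(\Psi,L):=(\Phi,H)^{-1}=\tilde\mu_A^{-1}\circ\tilde\mu_B$, which are mutually inverse cocycle conjugacies. To obtain $(\Phi,H)\cong_u(\phi,h)$, chain
\[
\tilde\mu_A\cong_u\mu_A=\mu_B\circ(\phi,h)\cong_u\tilde\mu_B\circ(\phi,h),
\]
using transitivity together with the fact that right-composition with a fixed cocycle morphism preserves $\cong_u$ (the same unitary path works, evaluated on the inner linear maps); left-composing by the extendible isomorphism $\tilde\mu_B^{-1}$ then cancels $\tilde\mu_B$ and gives $(\Phi,H)\cong_u(\phi,h)$. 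For the remaining equivalence I would argue purely by cancellation: since $(\Phi,H)^{-1}$ is an extendible isomorphism, left-composing the hypotheses and the just-proved equivalence yields
\[
(\Phi,H)^{-1}\cong_u(\Phi,H)^{-1}\circ(\phi,h)\circ(\psi,l)\cong_u(\psi,l),
\]
where the first step uses $(\phi,h)\circ(\psi,l)\cong_u\id_B$ and the second uses $(\Phi,H)^{-1}\circ(\phi,h)\cong_u(\Phi,H)^{-1}\circ(\Phi,H)=\id_A$ followed by right-composition with $(\psi,l)$. Hence $(\Psi,L)\cong_u(\psi,l)$.

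The main obstacle I anticipate is establishing cleanly the formal properties of asymptotic unitary equivalence that these manipulations rely on: that left-composition by a fixed \emph{extendible} morphism preserves $\cong_u$ (transporting the strictly continuous path through the unital extension $\gamma^{\dagger}$, exactly as in Remark~\ref{rmk: VerticalComp} and Lemma~\ref{lemma: CompConjIsConjAsympt}, which is why extendibility of $\phi$, $\psi$ and of the composites feeding Lemma~\ref{lemma: AsymCocConj} is essential), and that $\cong_u$ is transitive, obtained by reparametrising and concatenating the strictly continuous unitary paths. A secondary technical point is verifying the cofinality identifications so that the two subsystems genuinely share the limit $(D,F_D,J_D)$ with matching canonical maps; both of these are routine but must be set up carefully before the algebra above goes through.
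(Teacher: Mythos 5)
Your proposal is correct and follows essentially the same route as the paper: the paper forms the two inductive limits $(A^{(\infty)},F^{(\infty)},J^{(\infty)})=\varinjlim\{(A,F,J),(\psi,l)\circ(\phi,h)\}$ and $(B^{(\infty)},G^{(\infty)},I^{(\infty)})=\varinjlim\{(B,G,I),(\phi,h)\circ(\psi,l)\}$, links them by mutually inverse cocycle conjugacies via the universal property, and applies Lemma \ref{lemma: AsymCocConj} to the canonical embeddings, which is exactly your argument with the two limits realised as the odd and even cofinal subsystems of a single alternating telescope. Your explicit checks of injectivity and extendibility of the composites (needed to invoke Lemma \ref{lemma: AsymCocConj}) and of the compatibility of $\cong_u$ with composition are the same ingredients the paper relies on implicitly via Lemma \ref{lemma: CompConjIsConjAsympt} and Remark \ref{rmk: VerticalComp}.
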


\begin{proof}
Consider the cocycle morphisms $$(\kappa,r)= (\psi,l)\circ (\phi,h) : (A,F,J)\to (A,F,J)$$ and $$(\theta,s) = (\phi,h)\circ (\psi,l) :(B,G,I)\to (B,G,I)$$ fitting into the family of commuting diagrams
\begin{equation}\label{eq: AsymptInt}
\xymatrix{
\dots\ar[rr] && F(X) \ar[rd]^{h^X} \ar[rr]^{r^X} && F(X) \ar[r] \ar[rd]^{h^X} & \dots\\
\dots\ar[r] & G(X) \ar[ru]^{l^X} \ar[rr]^{s^X} && G(X) \ar[ru]^{l^X} \ar[rr]^{s^X} && \dots \quad .
}
\end{equation}

We can then form the inductive limits $$(A^{(\infty)},F^{(\infty)},J^{(\infty)})=\lim\limits_{\longrightarrow}\{(A,F,J),(\kappa,r)\}$$ and $$(B^{(\infty)},G^{(\infty)},I^{(\infty)})=\lim\limits_{\longrightarrow}\{(B,G,I),(\theta,s)\}$$ in the category $\C_{\cC}$. Consider the universal embeddings $$(\kappa_\infty,r_\infty):(A,F,J)\to (A^{(\infty)},F^{(\infty)},J^{(\infty)})$$ and $$(\theta_\infty,s_\infty):(B,G,I)\to (B^{(\infty)},G^{(\infty)},I^{(\infty)}).$$ Since the collection of diagrams in \eqref{eq: AsymptInt} commutes, the universal properties of both inductive limits yield that there exist mutually inverse cocycle conjugacies $$(\phi_\infty,h_\infty): (A^{(\infty)},F^{(\infty)},J^{(\infty)})\to (B^{(\infty)},G^{(\infty)},I^{(\infty)})$$ and $$(\psi_\infty, l_\infty): (B^{(\infty)},G^{(\infty)},I^{(\infty)})\to (A^{(\infty)},F^{(\infty)},J^{(\infty)})$$ such that $(\phi_\infty,h_\infty)\circ (\kappa_\infty,r_\infty)=(\theta_\infty,s_\infty)\circ (\phi,h)$ and $(\psi_\infty, l_\infty)\circ (\theta_\infty,s_\infty) =(\kappa_\infty,r_\infty) \circ (\psi,l)$.

Furthermore, the cocycle morphism $(\kappa,r)$ is asymptotically unitarily equivalent to the cocycle morphism induced by the identity map on $A$, which in particular implies that $(\kappa,r)$ is injective and thus Lemma \ref{lemma: AsymCocConj} gives that $(\kappa_\infty,r_\infty)$ is asymptotically unitarily equivalent to a cocycle conjugacy $(K, R) : (A,F,J)\to (A^{(\infty)},F^{(\infty)},J^{(\infty)})$. Likewise, $(\theta_\infty,s_\infty)$ is asymptotically unitarily equivalent to a cocycle conjugacy $(\Theta, S):(B,G,I)\to (B^{(\infty)},G^{(\infty)},I^{(\infty)})$.

Then, taking $$(\Phi, H)=(\Theta, S)^{-1}\circ (\phi_\infty, h_\infty)\circ(K, R)$$ yields that 
\begin{align*}
(\Phi, H) &\cong_u (\Theta, S)^{-1}\circ (\phi_\infty, h_\infty)\circ (\kappa_\infty,r_\infty)\\ & = (\Theta, S)^{-1}\circ (\theta_\infty,s_\infty)\circ (\phi,h) \\ &\cong_u (\phi,h).
\end{align*} Similarly, if we take $(\Psi, L)=(\Phi,H)^{-1}=(K, R)^{-1}\circ (\psi_\infty,l_\infty)\circ (\Theta, S)$, we get that $(\Psi, L)\cong_u(\psi,l)$, which finishes the proof.
\end{proof}

\begin{rmk}
Note that in Theorem~\ref{thm: AsymptIntertIdentity}, if the asymptotic unitary equivalences are realised by unitaries in the minimal unitisations, then we may drop the assumption of extendibility on the connecting morphisms. This follows as extendibility is only required to evaluate a morphism on a given unitary in the multiplier algebra.
However, there is no issue in doing so when the unitary is in the minimal unitisation.
\end{rmk}

\subsection*{Acknowledgements} We would like to thank Stuart White and Samuel Evington for their supervision on this project.\ We would also like to thank Corey Jones, George Elliott, and Gabór Szabó for useful discussions on the topic of this paper.\ Part of this work was completed during the authors' stay at the Fields Institute for Research in Mathematical Sciences for the 'Thematic Program on Operator Algebras and Applications' in Autumn 2023.\ We thank the Fields Institute and the organisers for the hospitality.

The first named author was supported by the Ioan and Rosemary James Scholarship awarded by St John's College and the Mathematical Institute, University of Oxford, as well as by projects G085020N and 1249225N funded by the Research Foundation Flanders (FWO).
The second named author was supported by the EPSRC grant EP/R513295/1, by the European Research Council under the European Union's Horizon Europe research and innovation programme (ERC grant AMEN--101124789), and by the postdoctoral fellowship 1204626N of the Research Foundation Flanders (FWO).

The authors' stay at the Fields Institute was partially funded by a Special Grant awarded by St John's College, Oxford.\ The first named author's stay was also partially supported by the Fields Institute while the travel costs of the second named author were supported by the EPSRC grant EP/X026647/1.

For the purpose of open access, the authors have applied a CC BY public copyright license to any author accepted manuscript version arising from this submission.

\allowdisplaybreaks

\bibliographystyle{abbrv}
\bibliography{categories}
\end{document}